\documentclass{amsart}
\usepackage{latexsym,mathtools}
\usepackage{amscd,amsfonts,amsmath,amssymb,amsthm}
\usepackage[normalem]{ulem}
\usepackage{comment}
\usepackage{dsfont}
\usepackage{tikz,tikz-cd}
\usepackage{enumitem}
\usepackage{aligned-overset}

\usepackage[T1]{fontenc}

\usepackage{bm}
\usepackage[colorlinks]{hyperref}
\usepackage{color}
\definecolor{darkgreen}{RGB}{55,138,0}

\hypersetup{citecolor = darkgreen}

\numberwithin{equation}{section}

\theoremstyle{plain}

\newtheorem{theorem}{Theorem}[section]
\newtheorem{lemma}[theorem]{Lemma}
\newtheorem{proposition}[theorem]{Proposition}
\newtheorem{corollary}[theorem]{Corollary}

\theoremstyle{definition}
\newtheorem{definition}[theorem]{Definition}
\newtheorem{example}[theorem]{Example}

\newtheorem{remark}[theorem]{Remark}
\newtheorem{question}[theorem]{Question}

\numberwithin{equation}{theorem} 

\newtheorem{thmintro}{Theorem}

\newtheorem*{theorem*}{Theorem}

\mathchardef\mhyphen="2D

\DeclareMathOperator\Aut{Aut}
\DeclareMathOperator\Der{Der}
\DeclareMathOperator\PDer{PDer}

\DeclareMathOperator\Hom{Hom}
\DeclareMathOperator\Id{Id}
\DeclareMathOperator\loz{loz}
\DeclareMathOperator\lcm{lcm}

\DeclareMathOperator\rk{rk}
\DeclareMathOperator\Spec{Spec}

\newcommand\NN{\mathbb N}

\newcommand\ZZ{\mathbb Z}
\newcommand\FF{\mathbb F}

\newcommand\cO{\mathcal O}

\newcommand\fg{\mathfrak g}

\newcommand{\balpha}{{\bm{\alpha}}}
\newcommand\bb{\mathbf b}
\newcommand\bc{\mathbf c}
\newcommand\be{\mathbf e}
\newcommand\boldm{\mathbf m}
\newcommand\bv{\mathbf v}
\newcommand\bw{\mathbf w}
\newcommand\bp{\mathbf p}

\newcommand\bu{\mathbf u}

\newcommand{\mathd}{\mathrm{d}}
\newcommand\bzero{\mathbf 0}

\newcommand\ch{\operatorname{char}}
\renewcommand\div{\mathrm{div}}

\newcommand\inv{^{-1}}

\newcommand\kk{\Bbbk}
\newcommand\tensor{\otimes}
\newcommand\PC{P_{\bc}}
\newcommand\pcnt{Z}
\newcommand\PN{\text{P}\mathcal{N}}

\newcommand\SP{S_{\bp}}
\newcommand\ZPC{ZP_{\bc}}

\newcommand\grp[1]{{\langle #1 \rangle}}


\begin{document}

\title[Log-ozone groups and centers of polynomial Poisson algebras]
{Log-ozone groups and centers of polynomial Poisson algebras}

\author{Kenneth Chan, Jason Gaddis, Robert Won, James J. Zhang}

\address{(Chan) Department of Mathematics, 
University of Washington, Box 354350, Seattle, Washington 98195, USA}
\email{kenhchan@math.washington.edu, ken.h.chan@gmail.com}

\address{(Gaddis) Department of Mathematics, 
Miami University, Oxford, Ohio 45056, USA} 
\email{gaddisj@miamioh.edu}

\address{(Won) Department of Mathematics,
The George Washington University, Washington, DC 20052, USA}
\email{robertwon@gwu.edu}

\address{(Zhang) Department of Mathematics,
University of Washington, Box 354350, Seattle, Washington 98195, USA}
\email{zhang@math.washington.edu}

\begin{abstract}
In previous work, the authors introduced the ozone group of an
associative algebra as the subgroup of automorphisms which fix the 
center pointwise. The authors studied PI skew polynomial algebras,
using the ozone group to understand their centers and to characterize
them among graded algebras.

In this work, we introduce and study the log-ozone group of a 
Poisson algebra over a field of positive characteristic.
The log-ozone group is then used 
to characterize polynomial Poisson 
algebras with skew symmetric structure.
We prove that unimodular Poisson algebras with skew symmetric
structure have Gorenstein centers. A related result is proved
for graded polynomial Poisson algebras of dimension three.
\end{abstract}

\subjclass[2020]{17B63, 17B40, 16W25, 16S36}


\keywords{Polynomial Poisson 
algebra, Poisson center, log-ozone group}

\maketitle

\section*{Introduction}
\label{xxsec0}

Let $\kk$ be a field, and for a large part of the 
paper, $\kk$ has positive characteristic. 
Throughout, all algebras are $\kk$-algebras.

Let $\bp = (p_{ij}) \in M_n(\kk^\times)$ be 
a multiplicatively antisymmetric matrix. The 
\emph{skew polynomial ring} (with parameters $\bp$) 
is defined as
\[ \SP = \kk\langle x_1,\hdots,x_n : x_jx_i 
= p_{ij} x_ix_j \rangle.\]
In previous work, the authors defined the 
\emph{ozone group} of $\SP$ as the set of (graded) 
automorphisms which fix the center of $\SP$ 
pointwise \cite{CGWZ2}. These automorphisms are 
called \emph{ozone automorphisms}. In \cite{CGWZ3}, 
the study of ozone groups was extended to any 
Artin--Schelter regular algebra satisfying a 
polynomial identity. The authors proved that the 
skew polynomial rings can be characterized as 
those PI Artin--Schelter regular algebras having 
abelian ozone groups of maximal order. In this 
work, we approach the problem of studying 
semiclassical limits of the skew polynomial rings 
$\SP$ and, more generally, polynomial Poisson 
algebras.

Recall that a \emph{Poisson algebra} is a 
commutative algebra $A$ along with a bracket 
$\{,\}:A \times A \to A$ such that $(A,\{,\})$ is 
a Lie algebra and $\{a,-\}$ is a derivation for 
each $a \in A$. The study of polynomial Poisson 
algebras is closely related to that of 
Artin--Schelter regular algebras. In \cite{TWZ}, 
Tang, Wang, and the fourth-named author defined 
the notion of an ozone derivation of a polynomial 
Poisson algebra $P$: a Poisson derivation 
$\partial$ of $P$ is called an 
\emph{ozone derivation} if $\partial(z) = 0$ for 
all $z$ in the Poisson center of $P$. However, 
one of our major goals is to develop an invariant 
of Poisson algebras which characterizes those with 
skew-symmetric structure. In positive 
characteristic, the notion of an ozone 
derivation in the sense of \cite{TWZ} may not be 
useful. For example, the above definition includes 
\emph{every} Hamiltonian derivation.

For PI Artin--Schelter regular algebras, an 
alternative characterization of ozone automorphisms 
is that they are precisely those automorphisms 
induced by conjugation by some normal regular 
element. Inspired by this characterization, given 
a regular Poisson normal element $f$ in a Poisson 
algebra, we define the \emph{log-ozone derivation 
relative to $f$} as $\delta_f = f\inv \{-,f\}$. 
In positive characteristic, we then define the 
\emph{log-ozone group} to be the (abelian) group 
generated by the derivations $\delta_f$, as $f$ 
runs over all regular Poisson normal elements.

In Section \ref{xxsec1} we give necessary 
background information on the Poisson algebras 
of interest. In particular, we recall the 
definition of the \emph{modular derivation} of 
a polynomial Poisson algebra (Definition 
\ref{xxdef1.4}). The modular derivation is 
closely related to the Nakayama automorphism of 
an Artin--Schelter regular algebra. We also 
provide some methods for computing the centers 
of certain Poisson algebras including tensor 
products and Poisson Ore extensions.

Section \ref{xxsec2} includes the formal 
definition of the log-ozone group (Definition 
\ref{xxdef2.3}), its basic properties, and 
some examples. One main result in this 
section is Theorem \ref{xxthmD}, stated below. 

In Section \ref{xxsec3} we study Poisson 
algebra structures which arise as 
semi-classical limits of the skew polynomial 
rings $\SP$. We define them here.

\begin{definition}
\label{xxdef0.1}
Let $\bc:=(c_{ij})_{n\times n}\in M_n(\kk)$ 
be a skew-symmetric matrix. The corresponding
\emph{skew-symmetric Poisson algebra}
$\PC$ is the algebra 
$\kk[x_1,\hdots,x_n]$ with Poisson bracket 
given by 
\[ \{ x_i,x_j\} = c_{ij} x_ix_j \quad
\text{for all $i<j$}.\]
The Poisson center of $\PC$ is denoted $\ZPC$.
\end{definition}

For the rest of the introduction we assume 
that $\kk$ is a field of 
characteristic $p>0$, and in Theorems 
\ref{xxthmB}, \ref{xxthmC}, and \ref{xxthmD} 
further assume that $\kk$ is algebraically 
closed. One basic question in this paper is 
to understand when the center $Z(P)$ of a Poisson 
algebra $P$ is Gorenstein (or has other 
favorable homological properties; see also Question 
\ref{xxque5.6}).

\begin{thmintro}[Theorem \ref{xxthm3.5}]
\label{xxthmA}
If $\PC$ is unimodular, then $\ZPC$ is 
Gorenstein.
\end{thmintro}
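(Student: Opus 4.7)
The strategy is to realize $\ZPC$ as an affine semigroup algebra and apply Stanley's classical Gorenstein criterion for such algebras. The Poisson bracket on $\PC$ is $\ZZ^n$-graded with respect to multidegree (indeed $\{x^{\mathbf{a}}, x^{\mathbf{b}}\}$ is a scalar multiple of $x^{\mathbf{a} + \mathbf{b}}$), so the center $\ZPC$ is $\ZZ^n$-graded and equals the $\kk$-span of the monomials it contains. A Leibniz computation shows that $x^{\mathbf{a}}$ is Poisson central if and only if $\sum_i a_i c_{ij} = 0$ in $\kk$ for every $j$, so
\[
\ZPC \;=\; \kk[L], \qquad L \;:=\; \{\mathbf{a} \in \NN^n : C\mathbf{a} = \mathbf{0} \text{ in } \kk^n\},
\]
where $C = (c_{ij})$ is viewed as a $\kk$-linear map $\ZZ^n \to \kk^n$. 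Separately, computing the divergence of the Hamiltonian $\{x_k,-\} = \sum_j c_{kj} x_k x_j \partial_j$ shows that the modular derivation of $\PC$ sends $x_k$ to $\bigl(\sum_j c_{kj}\bigr) x_k$; thus $\PC$ is unimodular if and only if $C\mathbf{1} = 0$, i.e.\ $\mathbf{1} \in L$.

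Next I would verify that $L$ is a positive normal affine semigroup with cone $\RR_{\geq 0}^n$, so that Stanley's criterion applies. Positivity is immediate from $L \subseteq \NN^n$. In positive characteristic, $p\be_1,\dots,p\be_n \in L$ (since $p = 0$ in $\kk$), which forces the cone of $L$ to be $\RR_{\geq 0}^n$. Normality reduces to showing $\ZZ L = K := \ker(C \colon \ZZ^n \to \kk^n)$: given $\mathbf{a} \in K$, unimodularity lets one write $\mathbf{a} = (\mathbf{a} + N\mathbf{1}) - N\mathbf{1}$ with both summands in $L$ for any $N \gg 0$, so $\mathbf{a} \in \ZZ L$. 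Then $L = \NN^n \cap K = \RR_{\geq 0}^n \cap \ZZ L$ is the normality condition.

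Stanley's criterion then says $\kk[L]$ is Gorenstein provided the lattice points of the relative interior of $\RR_{\geq 0}^n$ lying in $\ZZ L$ form a set of the shape $\mathbf{u} + L$ for some $\mathbf{u}$. I would take $\mathbf{u} = \mathbf{1}$: the inclusion $\mathbf{1} + L \subseteq L \cap \RR_{>0}^n$ is clear, and conversely any $\mathbf{a} \in L$ with $a_i \geq 1$ for all $i$ satisfies $\mathbf{a} - \mathbf{1} \in \NN^n$ and $C(\mathbf{a} - \mathbf{1}) = C\mathbf{a} - C\mathbf{1} = 0$, so $\mathbf{a} - \mathbf{1} \in L$. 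This yields the Gorenstein conclusion.

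The main obstacle is the normality step: without the unimodularity hypothesis, $\ZZ L$ can be a proper subgroup of $K$ and $L$ need not be normal, so Stanley's criterion would not directly apply. Thus $\mathbf{1} \in L$ is used twice — once as the ``Gorenstein shift'' witnessing the criterion, and once to ensure that $L$ is a normal affine semigroup in the first place.
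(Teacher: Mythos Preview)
Your proof is correct and takes a genuinely different route from the paper's. Both arguments identify $\ZPC$ with a semigroup algebra $\kk[L]$ and invoke results from Stanley's 1978 paper, but they use different parts of that paper. The paper exhibits the regular sequence $x_1^p,\dots,x_n^p$, decomposes $L$ as a disjoint union $\bigcup_{\bb\in B}(\bb+\Lambda)$ over the finite set $B=\{\bb\in L:0\le b_i<p\}$, and applies Stanley's Corollaries~6.4 and~6.5 for \emph{simplicial} semigroups: Cohen--Macaulayness comes from the freeness over $\kk[x_1^p,\dots,x_n^p]$, and Gorensteinness from the fact that $(p-1)\mathbf{1}$ is a maximal element of $B$ pairing every $\bb$ with $(p-1)\mathbf{1}-\bb$. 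You instead prove that $L$ is a \emph{normal} affine semigroup with full cone $\RR_{\ge0}^n$, so Hochster's theorem gives Cohen--Macaulayness, and then check Stanley's canonical-module criterion with Gorenstein shift $\mathbf{u}=\mathbf{1}$. The two key witnesses are dual: your $\mathbf{1}$ and the paper's $(p-1)\mathbf{1}$ sum to $p\mathbf{1}=\sum_i p\be_i$, the ``$a$-invariant'' vector for the regular sequence. Your route is slightly more conceptual; the paper's is more explicit about the module structure over $\kk[x_1^p,\dots,x_n^p]$, which they reuse in later results (e.g.\ their Theorem~3.8).

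One small correction to your closing remarks: normality and $\ZZ L=K$ do \emph{not} require unimodularity. Since $p\be_i\in L$ for all $i$, every class in $K/p\ZZ^n\subseteq\FF_p^n$ lifts to an element of $\{0,\dots,p-1\}^n\cap K=B\subseteq L$, so $\ZZ L=K$ holds unconditionally in positive characteristic, and then $L=\NN^n\cap K=\RR_{\ge0}^n\cap\ZZ L$ is normal regardless. Unimodularity is used exactly once, to produce the shift $\mathbf{1}\in L$ in the Gorenstein step; your argument that $\mathbf{a}-\mathbf{1}\in L$ whenever $\mathbf{a}\in L$ has all positive entries is where $C\mathbf{1}=0$ is genuinely needed.
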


In Section \ref{xxsec4} we focus on 
unimodular polynomial Poisson algebras. 
We prove the following general theorem 
in dimension three which is the Poisson 
version of \cite[Theorem D]{CGWZ3}.

\begin{thmintro}[Theorem 
\ref{xxthm4.3}]
\label{xxthmB}
Assume $p>3$. Let $P$ be a graded polynomial 
Poisson algebra of dimension $3$. If $P$ is 
unimodular, then $\pcnt(P)$ is Gorenstein.
\end{thmintro}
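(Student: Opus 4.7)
The plan is to mirror the classification approach used for PI Artin--Schelter regular algebras of dimension three in \cite{CGWZ3}. The first step is to exploit unimodularity to put the bracket in Jacobian (potential) form. For a graded polynomial Poisson algebra $P=\kk[x_1,x_2,x_3]$, unimodularity is equivalent to the existence of a homogeneous polynomial $\phi$ (cubic in the standard quadratic case) such that, after rescaling,
\[ \{x_1,x_2\}=\frac{\partial \phi}{\partial x_3},\qquad \{x_2,x_3\}=\frac{\partial \phi}{\partial x_1},\qquad \{x_3,x_1\}=\frac{\partial \phi}{\partial x_2}, \]
because in three variables every divergence-free polynomial bivector is the curl of a potential. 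The polynomial $\phi$ is then automatically a Casimir: each bracket $\{\phi,x_i\}$ is a $3\times 3$ determinant with a repeated column.

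The second step is to combine $\phi$ with the Frobenius Casimirs. Since $\ch\kk=p$, the Leibniz rule yields $\{x_i^p,-\}=p\,x_i^{p-1}\{x_i,-\}=0$, so each $x_i^p$ lies in $\pcnt(P)$. Setting
\[ R:=\kk[x_1^p,x_2^p,x_3^p,\phi], \]
we have $R\subseteq\pcnt(P)$, and the module-finiteness of $P$ over $\kk[x_1^p,x_2^p,x_3^p]$ forces $\pcnt(P)$ to be module-finite over $R$. Moreover $\phi^p$ equals the Frobenius twist $\phi^{(p)}$ of $\phi$ evaluated at $x_1^p,x_2^p,x_3^p$, so $R$ is cut out by a single relation inside a four-variable polynomial ring, exhibiting $R$ as a hypersurface ring and hence Gorenstein. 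The remaining task is to show $\pcnt(P)=R$, or at worst a complete-intersection extension of $R$, in every case.

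To finish, I would invoke a classification of cubic potentials $\phi$ up to linear change of coordinates---essentially the list of plane cubic curves---and verify Gorensteinness case by case. When $\phi$ factors nontrivially (conic plus line, three lines, triple line, \emph{etc.}), each irreducible factor is a further Casimir and $\pcnt(P)$ admits an explicit complete-intersection presentation over $\kk[x_1^p,x_2^p,x_3^p]$; one may also appeal here to the Poisson-Ore extension and tensor product computations from Section~\ref{xxsec1}, or directly to Theorem~\ref{xxthmA} when the algebra reduces to the skew-symmetric case. When $V(\phi)$ is irreducible (smooth elliptic, nodal, or cuspidal), a direct dimension-count on the associated graded for the Frobenius filtration should give $\pcnt(P)=R$. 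The main obstacle is the smooth elliptic (Sklyanin) case, where ruling out hidden Casimirs requires real work; this is also exactly where the hypothesis $p>3$ enters, since Weierstrass and Hesse normal forms for plane cubics degenerate in characteristics $2$ and $3$.
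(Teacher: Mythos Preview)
Your overall strategy matches the paper's: pass to Jacobian form via Lemma~\ref{xxlem4.1}, classify potentials up to linear change of variables, and compute the center case by case (Lemma~\ref{xxlem4.2}). Your observation that $R=\kk[x_1^p,x_2^p,x_3^p,\phi]$ is a hypersurface ring is exactly what makes the conclusion go through, once one knows $\pcnt(P)=R$.

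There is, however, a genuine error in your treatment of the reducible potentials. You assert that ``each irreducible factor is a further Casimir.'' This is false. If $\Omega=gh$ with $g$ linear, then $g$ is Poisson \emph{normal}---$\{a,g\}\in gP$ for all $a$---but typically not central. For instance, with $\Omega=2x_1x_2x_3$ one has $\{x_1,x_2\}=\partial\Omega/\partial x_3=2x_1x_2\neq 0$, so none of $x_1,x_2,x_3$ is a Casimir. Your proposed ``complete-intersection extension of $R$'' therefore does not materialize: the linear factors contribute log-ozone derivations (see Lemma~\ref{xxlem4.6}(3) and Proposition~\ref{xxpro4.9}), not new central elements.

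In fact the paper's Lemma~\ref{xxlem4.2} shows that $\pcnt(P_\Omega)=R$ in \emph{every} case of Lemma~\ref{xxlem4.1} except $\Omega=x_1^3$, where the center is the polynomial ring $\kk[x_1,x_2^p,x_3^p]$. So the reducible cases are not easier than the irreducible ones; the paper handles them one at a time via Poisson Ore extension arguments, localization tricks (replacing $x_3$ by a suitable $\widehat{x_3}$), and kernel computations for Jacobian derivations citing \cite{Jconst2}, rather than by any uniform structural argument. Your proposed ``dimension count on the associated graded for the Frobenius filtration'' in the irreducible case is not made precise, and in the paper this step too is done by explicit derivation-kernel computations (e.g., showing $K_i[x_j,x_k]^{\beta_i}=K_i[x_j^p,x_k^p,\Omega]$ for the Sklyanin and nodal types).
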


Note that the conclusion of Theorem 
\ref{xxthmB} does not hold in the case 
that $p = 3$, see Example \ref{xxexa5.4}(2).  

\begin{definition}
\label{xxdef0.2}
A polynomial Poisson algebra 
$P = \kk[x_1,x_2,x_3]$ has 
\emph{Jacobian structure} if there is 
some $\Omega \in \kk[x_1,x_2,x_3]$ such 
that
\[
\{ x_1,x_2 \} = 
\frac{\partial \Omega}{\partial x_3}, 
\qquad \{ x_2,x_3 \} = 
\frac{\partial \Omega}{\partial x_1}, 
\qquad \{ x_3,x_1 \} = 
\frac{\partial \Omega}{\partial x_2}.
\]
In this case, we denote the Poisson 
algebra by $P_{\Omega}$ and call $\Omega$ 
the \emph{potential} of $P_{\Omega}$.
\end{definition}

It is well known in characteristic zero 
that all unimodular polynomial Poisson 
algebras in three variables have Jacobian structure. 
For characteristic $p>3$, this is still 
true and, when $\kk$ is algebraically 
closed, the classification of possible 
potentials $\Omega$ is identical (see 
Lemma~\ref{xxlem4.1}).

We determine the log-ozone group of polynomial 
Poisson algebras in three variables (see also Proposition~\ref{xxpro4.9}).

\begin{thmintro}[Theorem \ref{xxthm4.7}]
\label{xxthmC}
Suppose $p>3$. Let $P=\kk[x_1,x_2,x_3]$ be 
a graded polynomial Poisson algebra. Then 
$\loz(P) = 0$ if and only if $P$ is 
isomorphic to $P_{\Omega}$ where 
$\Omega = x_1^3$ or $\Omega$ is irreducible. 
In particular, if $\loz(P) = 0$, then $P$ 
is unimodular.
\end{thmintro}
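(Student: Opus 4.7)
The plan is to characterize Poisson normal elements of $P$ (using the Jacobian bracket in the unimodular case) and to determine which of them are Poisson central.

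\textbf{Reverse direction.} Assume $P\cong P_\Omega$ with $\Omega$ either irreducible or equal to $x_1^3$. The Jacobian bracket expands as $\{a,f\}=\det(\nabla a,\nabla f,\nabla\Omega)$, so a regular $f$ is Poisson normal iff $\nabla f\times\nabla\Omega\equiv 0\pmod f$. For homogeneous irreducible $f$, this forces $\Omega$ to vanish on the cone $V(f)$ and hence $f\mid\Omega$; a coprime-factor induction then shows every Poisson normal element is a product of irreducible factors of $\Omega$. Writing $\Omega=f^k h$ with $\gcd(f,h)=1$ and expanding $\nabla\Omega$ yields $\{a,f\}=f^k\det(\nabla a,\nabla f,\nabla h)$, so $f$ is Poisson central iff $\nabla f\times\nabla h=0$. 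In the homogeneous coprime regime with $p>3$, combining $\nabla h=\mu\nabla f$ with Euler's identity gives $d_f f\,\nabla h=d_h h\,\nabla f$; since $\gcd(f,h)=1$ and $d_f,d_h$ are nonzero in $\kk$, this forces $\partial_i h=0$ for all $i$, so $h$ is a constant and $\Omega=cf^k$. Both hypothesized cases ($k=1$, or $k=3$ with $\deg f=1$) have this form, so every irreducible factor of $\Omega$ is Poisson central, every Poisson normal element is central, and $\loz(P)=0$.

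\textbf{Forward direction.} Assume $\loz(P)=0$. I would first show that $P$ is unimodular: if the modular derivation $\boldm$ of $P$ (Definition~\ref{xxdef1.4}) is nonzero, then the classification of graded polynomial Poisson structures on $\kk[x_1,x_2,x_3]$ underlying Proposition~\ref{xxpro4.9} produces a regular Poisson normal element $f$ with $\delta_f\neq 0$, contradicting the hypothesis. Once $P$ is known to be unimodular, Lemma~\ref{xxlem4.1} gives $P\cong P_\Omega$ for some homogeneous cubic $\Omega$, and the analysis of the reverse direction forces every irreducible factor of $\Omega$ to be Poisson central, so $\Omega=cf^k$ for some irreducible homogeneous $f$. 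Since $\deg\Omega=3$ and $\kk$ is algebraically closed, either $k=1$ (so $\Omega$ is irreducible) or $k=3$ with $\deg f=1$, in which case a graded linear change of coordinates puts $\Omega$ in the form $x_1^3$. The closing ``in particular'' statement follows because every $P_\Omega$ is unimodular.

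\textbf{Main obstacle.} The hardest step will be the reduction to the unimodular case in the forward direction: showing that every non-unimodular graded polynomial Poisson algebra in three variables admits a regular Poisson normal element with nontrivial log-ozone derivation. I expect this to proceed by a case-by-case inspection of the classification of graded Poisson structures in dimension three, using the explicit form of $\boldm$ in each non-Jacobian family to exhibit the required normal generator.
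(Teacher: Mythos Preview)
Your treatment of the unimodular case is essentially correct and takes a genuinely different route from the paper. For the reverse direction, the paper invokes the external fact $\PDer_0(P_\Omega)=\kk E$ (from \cite{TWZ}) when $\Omega$ is irreducible, and does a bare-hands computation when $\Omega=x_1^3$; you instead prove directly that every homogeneous irreducible Poisson normal element must divide $\Omega$ (via the Euler-identity argument on $V(f)$), then show that the unique irreducible factor of $\Omega=cf^k$ is central. Your argument is more self-contained and yields the sharper statement ``irreducible Poisson normal $\Rightarrow$ divides $\Omega$'' in one stroke. One caution: the vanishing argument on $V(f)$ needs $\kk$ algebraically closed (for the Nullstellensatz step) and perfect (so that an irreducible $f$ is not a $p$th power, guaranteeing a dense smooth locus); both hold under the standing hypotheses, but you should say so.

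The real divergence is in the non-unimodular reduction, and here your plan is both misattributed and unnecessarily hard. Proposition~\ref{xxpro4.9} concerns only \emph{unimodular} algebras, so it gives no classification of non-unimodular graded Poisson structures to inspect case by case; no such classification appears in the paper. The paper avoids this entirely by a twisting argument (Lemma~\ref{xxlem4.6}(1)): set $\delta=\tfrac{1}{3}\phi$ and form the Poisson twist $P^{\delta}$, which is unimodular and hence equals $P_{\Omega}$ for some potential $\Omega$; then, using $\div(\delta)=0$ and Lemma~\ref{xxlem4.5} to get $\delta(\Omega)=0$, one computes in the original bracket that $\{x_i,\Omega\}_P=\phi(x_i)\,\Omega$, so $\Omega$ is Poisson normal in $P$ with $\delta_\Omega=\phi\neq 0$. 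This exhibits the modular derivation itself as a log-ozone derivation, with no case analysis. I would replace your proposed case-by-case plan with this twist argument.
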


Another basic question in this paper is 
whether or not $\loz(P)$ is finite, or more 
precisely, if $\loz(P)$ has a uniform bound 
as below.

\begin{question}[Question \ref{xxque5.2}]
\label{xxque0.3}
Let $P$ be a graded polynomial Poisson 
algebra. Is then $|\loz(P)|$ bounded above 
by $\rk_{Z(P)}(P)$?
\end{question}

A positive answer to the above question 
would be a nice result analogous to 
\cite[Theorem E]{CGWZ3}. In his Bourbaki 
seminar, Kraft considered the following 
problem as one of the eight challenging 
problems in affine algebraic geometry 
\cite{Kr96}.

\begin{question}
\label{xxque0.4}
Find an algebraic-geometric characterization 
of $\kk[x_1,\cdots,x_n]$.
\end{question}

We state a characterization result for 
the Poisson algebras $\PC$ in Definition 
\ref{xxdef0.1} that is the Poisson version 
of \cite[Theorem F]{CGWZ3}.

\begin{thmintro}[Theorem \ref{xxthm2.12}]
\label{xxthmD}
Suppose $A$ is a graded polynomial Poisson 
algebra. Then $A$ is isomorphic to 
$P_{\bc}$ for some $\bc$ if and only if 
$|\loz(A)|=\rk_{Z(A)}(A)$ 
and every $\delta \in \loz(A)$ is 
diagonalizable when it acts on $A_1$.
\end{thmintro}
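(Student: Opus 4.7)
For the forward direction, assume $A = \PC$. The generators $x_i$ are Poisson normal and a direct calculation gives $\delta_{x_j}(x_i) = -c_{ij} x_i$, so each $\delta_{x_j}$ acts diagonally on $A_1$ in the basis $\{x_1,\dots,x_n\}$; every element of $\loz(A)$, which is the additive subgroup generated by such $\delta_f$, is therefore diagonalized in this same basis. The numerical equality $|\loz(\PC)| = \rk_{Z(\PC)}(\PC)$ should be checked directly: both sides admit explicit descriptions in terms of the matrix $\bc$, with the log-ozone generators encoded by the columns of $\bc$ modulo the lattice of $p$-powers, and the Poisson center $\ZPC$ spanned by monomials $x^{\balpha}$ satisfying $\sum_i \alpha_i c_{ij} = 0$ for all $j$. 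Matching the two counts is a lattice computation mirroring the skew polynomial case in \cite{CGWZ3}.

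For the backward direction, assume the two stated conditions. By the basic properties of the log-ozone group established in Section~\ref{xxsec2}, $\loz(A)$ is a finite abelian group of pairwise commuting derivations, so the hypothesis that each element is diagonalizable on $A_1$ lets us simultaneously diagonalize to obtain a basis $y_1,\dots,y_n$ of $A_1$ of common eigenvectors with additive weights $\chi_i : \loz(A) \to \kk$. Because Poisson brackets and products respect these weights, $A$ itself decomposes as a direct sum $A = \bigoplus_{\chi} A^{\chi}$ of weight submodules over the invariants $A^{\loz(A)}$. The first step is to identify $A^{\loz(A)}$ with $Z(A)$, which should follow from the very definition of $\loz(A)$ via Poisson normals. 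The second, more delicate, step is to exploit the equality $|\loz(A)| = \rk_{Z(A)}(A)$: since $\rk_{Z(A)}(A) = \sum_\chi \rk_{Z(A)}(A^\chi)$ and only finitely many $\chi$ contribute, the equality should force each nonzero $A^\chi$ to be free of rank one over $Z(A)$, with exactly $|\loz(A)|$ distinct characters occurring.

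Once the rank-one decomposition is in place, a $Z(A)$-generator $g_\chi$ of each $A^\chi$ satisfies $\{g_\chi, g_{\chi'}\} = z_{\chi,\chi'}\, g_\chi g_{\chi'}$ for some $z_{\chi,\chi'} \in Z(A)$, because both $\{g_\chi, g_{\chi'}\}$ and $g_\chi g_{\chi'}$ lie in the rank-one module $A^{\chi+\chi'}$. Applied to the degree-one eigenvectors $y_i$ and combined with the homogeneity of the graded Poisson bracket, this forces $z_{\chi_i,\chi_j}$ to lie in the scalar part of $Z(A)$, yielding constants $c_{ij} \in \kk$ with $\{y_i, y_j\} = c_{ij}\, y_i y_j$ and hence $A \iso P_{\bc}$ for $\bc = (c_{ij})$. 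The main obstacle I anticipate is precisely the rank-one step: upgrading $|\loz(A)| = \rk_{Z(A)}(A)$ to the statement that every nonzero weight space is free of rank one requires careful control over which characters can occur and over the $Z(A)$-module structure of each $A^\chi$, and it is here that the interplay between Poisson normal elements and the log-ozone group developed in Section~\ref{xxsec2} does the essential work, playing the role that ``generation by normal elements'' plays in the proof of \cite[Theorem F]{CGWZ3}.
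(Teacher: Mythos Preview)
Your backward direction contains a genuine gap at the step ``identify $A^{\loz(A)}$ with $Z(A)$.'' This identification is \emph{not} a consequence of the definition of $\loz(A)$: the invariant subalgebra $A^{\loz(A)} = \bigcap_{\delta \in \loz(A)} \ker \delta$ is what the paper calls $C_{\loz}(A)$ in Remark~\ref{xxrem2.13}, and in general it strictly contains $Z(A)$ (Example~\ref{xxexa2.14} exhibits a graded polynomial Poisson algebra with $C_{\loz}(P) = \kk[x_1,x_2^p] \supsetneq \kk[x_1^p,x_2^p] = Z(P)$). The equality $C_{\loz}(A) = Z(A)$ is in fact a \emph{consequence} of $A \cong P_{\bc}$, not an input to the proof. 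Your subsequent rank-one counting argument hinges on this identification, and you correctly flag it as the main obstacle, but the mechanism you propose for resolving it---a counting of characters---does not work: the group of characters $\chi:\loz(A)\to(\kk,+)$ that can appear is not bounded by $|\loz(A)|$ in any obvious way, since $\Hom((\ZZ/p)^\ell,\kk)$ is infinite when $\kk$ is.

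The paper circumvents both issues by working first with the \emph{Poisson-normal} decomposition $A_{\loz} = \bigoplus_{\delta} A_\delta$, where $A_\delta = \{f \in \PN(A) : \delta_f = \delta\}\cup\{0\}$. The rank-one property of each $A_\delta$ over $Z$ is proved directly by the trick $f^{p-1}g \in Z$ for $f,g \in A_\delta$ (Lemma~\ref{xxlem2.11}(2)); no counting is needed. The hypothesis $|\loz(A)| = \rk_Z(A)$ then forces $Q(A_{\loz}) = Q(A)$, and only \emph{afterwards} does the paper connect the Poisson-normal pieces $A_\delta$ to simultaneous eigenspaces (Claims~1 and~2 in the proof of Theorem~\ref{xxthm2.12}), showing they coincide over the fraction field. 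In short, the key idea you are missing is to index by log-ozone derivations rather than by characters, so that rank one comes for free from the multiplicative structure of Poisson normal elements.
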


If every $\delta \in \loz(A)$ is 
diagonalizable when it acts on $A_1$, we 
call $A$ \emph{inferable} (see 
Definition~\ref{xxdef2.9}). The above 
theorem does not hold if we remove the 
condition that $A$ is inferable (see 
Example \ref{xxexa5.4}(1)). It would be very 
interesting to understand which graded 
polynomial Poisson algebras $A$ satisfy 
$|\loz(A)|=\rk_{Z(A)}(A)$.

In the final section, we provide some
comments, examples, open questions, and 
further directions.

\subsection*{Acknowledgments} 
R. Won was partially supported by an 
AMS–Simons Travel Grant and Simons 
Foundation grant \#961085. J.J. Zhang was 
partially supported by the US National 
Science Foundation (No. DMS-2302087).

\section{Background and basic results}
\label{xxsec1}

Suppose that $G$ is an abelian (semi)group.
An associative algebra $A$ is 
\emph{$G$-graded} if there is a vector space 
decomposition $A = \bigoplus_{g \in G} A_g$ 
such that $A_g A_h \subset A_{g+h}$ for all 
$g,h \in G$ and that $1_A\in A_0$. An 
element $a \in A_g$ is called 
\emph{homogeneous} of degree $g$. We also 
use the notation $|a|$ for the degree of 
$a$. If $\fg$ is a Lie algebra, with 
bracket $[\, ,]$, then $L$ is 
\emph{$G$-graded} if there is a vector 
space decomposition $\fg = 
\bigoplus_{g \in G} \fg_g$ such that 
$[\fg_g, \fg_h] \subset \fg_{g+h}$ for 
all $g, h \in G$.

A \emph{Poisson algebra} $A$ is a 
commutative $\kk$-algebra equipped with a 
bilinear bracket $\{,\}: A \times A \to A$ 
such that $(A,\{,\})$ is a Lie algebra and 
$\{a,-\}:A \to A$ is a derivation for all 
$a \in A$. If $A = \kk[x_1,\dots, x_n]$ as 
an associative ring, then we say that $A$ 
is a \emph{polynomial Poisson algebra}.

The Poisson algebra $A$ is \emph{$G$-graded} 
by an abelian (semi)group $G$ if it is 
$G$-graded as both an associative algebra 
and as a Lie algebra (with the same vector 
space decomposition). We say that a 
Poisson algebra is \emph{graded} if it is 
graded by the natural numbers $\NN$. The 
Poisson algebras $\PC$ in 
Definition~\ref{xxdef0.1} and the Poisson 
algebras with Jacobian structure $P_{\Omega}$ 
in Definition~\ref{xxdef0.2} are graded 
polynomial Poisson algebras with the usual 
grading where $\deg(x_i) = 1$ for all $i$.

The \emph{Poisson center} of $A$, denoted 
$Z(A)$, is the set
\begin{equation}
\label{E1.0.1}\tag{E1.0.1}
Z(A) = \left \{ z \in A : \{z,a\}=0 
\text{ for all $a \in A$}\right\}.
\end{equation}

Let $A$ and $B$ be Poisson algebras with 
brackets $\{,\}_A$ and $\{,\}_B$, 
respectively. Then $A \tensor B$ is a 
Poisson algebra with bracket
\[
\{a \tensor b, c \tensor d\} 
= \{a,c\}_A \tensor bd + ac \tensor 
\{b,d\}_B.
\]

\begin{lemma}
\label{xxlem1.1}
Let $(A,\{,\}_A)$ and $(B,\{,\}_B)$ be 
Poisson algebras. Then $\pcnt(A \tensor B) 
= \pcnt(A) \tensor \pcnt(B)$.
\end{lemma}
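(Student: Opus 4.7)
The plan is to prove the two inclusions separately. The inclusion $\pcnt(A) \tensor \pcnt(B) \subseteq \pcnt(A \tensor B)$ is a direct computation from the formula for the Poisson bracket on $A \tensor B$: if $a \in \pcnt(A)$ and $b \in \pcnt(B)$, then for every $c \tensor d$, both summands in $\{a,c\}_A \tensor bd + ac \tensor \{b,d\}_B$ vanish, and this extends by bilinearity to all of $\pcnt(A) \tensor \pcnt(B)$.

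For the reverse inclusion, I would fix a $\kk$-basis $\{e_\alpha\}$ of $A$ and write an arbitrary element $z \in \pcnt(A \tensor B)$ uniquely as $z = \sum_\alpha e_\alpha \tensor b_\alpha$ with only finitely many nonzero $b_\alpha \in B$. Testing centrality against elements of the form $1 \tensor d$ for arbitrary $d \in B$ gives
\[
0 = \{z, 1 \tensor d\} = \sum_\alpha e_\alpha \tensor \{b_\alpha, d\}_B,
\]
so by linear independence of $\{e_\alpha\}$ in $A$ (equivalently, freeness of $A \tensor B$ as a right $B$-module on the basis $\{e_\alpha \tensor 1\}$), each $\{b_\alpha, d\}_B = 0$ for every $d$, forcing $b_\alpha \in \pcnt(B)$.

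Next, choose a basis $\{f_\beta\}$ of $\pcnt(B)$ and rewrite $z = \sum_\beta a_\beta \tensor f_\beta$ with $a_\beta \in A$. Testing against $c \tensor 1$ for arbitrary $c \in A$ and using that $\{f_\beta, 1\}_B = 0$ yields
\[
0 = \{z, c \tensor 1\} = \sum_\beta \{a_\beta, c\}_A \tensor f_\beta,
\]
and linear independence of $\{f_\beta\}$ in $B$ forces $\{a_\beta, c\}_A = 0$ for every $c$ and $\beta$. Thus each $a_\beta \in \pcnt(A)$, and $z \in \pcnt(A) \tensor \pcnt(B)$.

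The argument is entirely routine; the only mild subtlety is the two-step bookkeeping — one must first extract that the $B$-coefficients lie in $\pcnt(B)$ before re-expanding on a basis of $\pcnt(B)$, since the $A$-basis elements $\{e_\alpha\}$ are not themselves Poisson central. This is the standard "center of a tensor product" argument adapted to the Poisson setting via the derivation property of the bracket.
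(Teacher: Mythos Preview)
Your proof is correct and follows essentially the same approach as the paper: both arguments establish the nontrivial inclusion by testing a central element against pure tensors of the form $x \tensor 1$ and $1 \tensor d$ and using linear independence to conclude that the coefficients lie in the respective Poisson centers. The paper's version is slightly terser (it writes $\sum a_i \tensor b_i$ with the $b_i$ linearly independent and handles both sides with a single ``similarly''), whereas you spell out the two-step re-expansion more explicitly, but the content is the same.
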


\begin{proof}
The inclusion $\pcnt(A) \tensor \pcnt(B) 
\subset \pcnt(A \tensor B)$ is clear. Let 
$\sum a_i \tensor b_i \in \pcnt(A \tensor B)$ 
with $b_i$ linearly independent. If 
$x \tensor 1 \in A \tensor B$, then 
\[
0 = \left\lbrace \sum a_i \tensor b_i, x 
\tensor 1\right\rbrace 
= \sum \{ a_i,x\} \tensor b_i.\]
It follows that $\{ a_i,x\}=0$ so 
$a_i \in \pcnt(A)$ for all $i$. Similarly 
$b_i \in \pcnt(B)$ for all $i$. Thus the 
other inclusion holds.
\end{proof}

We now review several definitions 
regarding derivations of Poisson algebras.

\begin{definition}
\label{xxdef1.2}
Let $(A,\{,\})$ be a Poisson algebra. 
\begin{enumerate}[leftmargin=*]
\item[(1)] 
A \emph{Poisson derivation} of $A$ is 
a $\kk$-linear map $\delta: A \to A$ 
such that for all $a,b \in A$,
\[
\delta(ab) =\delta(a)b+a\delta(b) \quad
\text{and}\quad \delta(\{a,b\}) 
= \{\delta(a),b\} + \{a,\delta(b)\}.
\]
The first condition guarantees $\delta$ is 
an ordinary derivation of the (associative, 
commutative) algebra $A$. The set of all 
Poisson derivations of $A$ is denoted 
$\PDer(A)$. There is a Lie algebra structure 
on $\PDer(A)$ given by 
$[\delta, d] = \delta \circ d - d\circ\delta$ 
for $\delta, d \in \PDer(A)$.
\item[(2)] 
Let $A = \bigoplus_{g \in G} A_g$ be a 
$G$-graded Poisson algebra. For $g \in G$, 
a Poisson derivation $\delta$ is 
\emph{graded {\rm{(}}of degree $g${\rm{)}}} 
if $\delta(A_h) \subseteq A_{g + h}$ for 
all $h \in G$. The vector space of graded 
Poisson derivations of $A$ of degree $g$ is 
denoted $\PDer_g(A)$. The set of all 
graded Poisson derivations 
$\underline{\PDer}(A) 
= \bigoplus_{g \in G} \PDer_g(A)$ forms a 
graded Lie algebra. 
\end{enumerate}
\end{definition}

\begin{example}
\label{xxexa1.3}
Let $A$ be a graded Poisson algebra. The 
\emph{Euler derivation} $E: A \to A$ is 
defined by $E(a)=|a| a$ for all 
homogeneous $a \in A$. It is easy to see 
that $E \in \PDer_0(A)$.
\end{example}

\subsection{Unimodular Poisson algebras}
\label{xxsec1.1}
As another important example, we recall a 
specific Poisson derivation of a 
polynomial Poisson algebra, which plays a 
similar role as the Nakayama automorphism 
of a PI Artin--Schelter regular algebra. 

\begin{definition}
\label{xxdef1.4}
Suppose $P = \kk[x_1,\hdots,x_n]$ is a 
polynomial Poisson algebra with bracket 
$\{,\}$. The \emph{modular derivation of $P$} 
is the derivation $\phi$ of $P$ defined by
\[
\phi(f) = \sum_{j=1}^n 
\frac{\partial}{\partial x_j}\{x_j,f\} 
\qquad \text{for all $f \in P$.}
\]
If $\phi=0$, then $P$ is called 
\emph{unimodular}.
\end{definition}

\subsection{Poisson Ore extensions}
\label{xxsex1.2}
Let $(A,\{,\}_A)$ be a Poisson algebra 
and $\alpha$ a Poisson derivation of $A$. 
A derivation $\beta$ of $A$ is called a 
\emph{Poisson $\alpha$-derivation} of the 
Poisson algebra $A$ if
\[
\beta(\{a,b\}_A) = \{ \beta(a),b \}_A 
+ \{ a, \beta(b) \}_A + \alpha(a)\beta(b) 
- \beta(a)\alpha(b).
\]
When $\alpha = 0$, a Poisson 
$\alpha$-derivation is just a Poisson 
derivation of $A$. The \emph{Poisson Ore 
extension} associated to $(A,\alpha,\beta)$ 
is the polynomial extension $A[t]$ with 
bracket
\[
\{a,b\} = \{a,b\}_A
\quad\text{and}\quad 
\{a,t\} = \alpha(a)t + \beta(a)
\]
for all $a,b \in A$. We denote this by 
$A[t;\alpha,\beta]_P$.

We call a Poisson derivation $\alpha$ 
\emph{log-Hamiltonian} if there exists a 
unit $u \in A^{\times}$ such that 
$\alpha(a) u = \{u,a\}$ for all $a \in A$. 
As above, let $\alpha$ be a Poisson 
derivation of the Poisson algebra $A$. 
Let 
\[ A^\alpha = \{ a \in A : \alpha(a)=0\} 
\quad\text{and}\quad 
\pcnt(A)^\alpha = A^\alpha \cap \pcnt(A).\]
If $\alpha$ is log-Hamiltonian 
(with corresponding unit $u$), then 
$A[t;\alpha]_P = A[ut]$, since
\[
\{ a, ut \} 
= \{a,t\} u + \{a,u\}t
= \alpha(a)t u + \{ a,u \} t
= \{u,a\}t + \{a,u\}t = 0.
\]
For each $i \in {\ZZ}$, let 
\[
N(i\alpha) = \{ y \in A^\alpha : \{y,a\} 
= i\alpha(a)y \text{ for all } a \in A\}.
\]
The following is a Poisson version of 
\cite[Lemma 2.3]{GKM}.

\begin{lemma}
\label{xxlem1.5}
Let $A$ be a Poisson domain and let 
$B=A[t;\alpha,\beta]_P$ be a Poisson Ore 
extension. 
\begin{enumerate}
\item[(1)] 
\label{pcnt1} 
Suppose $\beta=0$. Then $\pcnt(B) = 
\bigoplus_{i \geq 0} N(i\alpha) t^i$.
\item[(2)] 
\label{pcnt2} 
Suppose $\beta\neq 0$.
If $A$ has trivial bracket and $\alpha=0$, 
then $\pcnt(B) = A^\beta$ if $\kk$ has 
characteristic zero and 
$\pcnt(B)=A^\beta[t^p]$ if $\kk$ has 
characteristic $p>0$.
\end{enumerate}
\end{lemma}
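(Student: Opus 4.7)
My plan is to use a coefficient-by-coefficient argument in the $t$-adic expansion, which mirrors the proof for ordinary (associative) Ore extensions in \cite[Lemma 2.3]{GKM}. Writing $f = \sum_{i \ge 0} a_i t^i \in B$ with $a_i \in A$, and using that $\{f,-\}$ is a derivation and $B$ is generated as a Poisson algebra by $A$ and $t$, an element $f$ lies in $\pcnt(B)$ if and only if $\{f,t\} = 0$ and $\{f,a\} = 0$ for every $a \in A$. From the defining brackets I can compute
\[
\{f,t\} = \sum_{i\ge 0}\bigl(\alpha(a_i)t + \beta(a_i)\bigr)t^i,
\]
and, using $\{t,a\} = -\alpha(a)t - \beta(a)$ together with the Leibniz rule in the first slot,
\[
\{f,a\} = \sum_{i\ge 0}\bigl(\{a_i,a\}_A - i\alpha(a)a_i\bigr)t^i \;-\; \sum_{i\ge 1} i\beta(a)\,a_i\, t^{i-1}.
\]

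For part (1), where $\beta = 0$, vanishing of $\{f,t\}$ forces $a_i \in A^{\alpha}$, and vanishing of $\{f,a\}$ for all $a\in A$ forces $\{a_i,a\}_A = i\alpha(a)a_i$; together these amount exactly to $a_i \in N(i\alpha)$, giving the claimed direct-sum decomposition. For part (2), where $\alpha = 0$ and $\{,\}_A = 0$, vanishing of $\{f,a\}$ simplifies to $i\beta(a)a_i = 0$ for all $i \ge 1$ and $a \in A$; since $\beta \ne 0$ and $A$ is a domain, I can pick $a$ with $\beta(a) \ne 0$ to conclude that $i a_i = 0$ in $A$. In characteristic zero this kills every $a_i$ with $i \ge 1$, after which $\{f,t\} = 0$ reduces to $\beta(a_0) = 0$, so $\pcnt(B) = A^{\beta}$. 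In characteristic $p > 0$ the equation $i a_i = 0$ tolerates nonzero $a_i$ precisely when $p \mid i$, forcing $f \in A[t^p]$; the residual equation $\{f,t\} = \sum_j \beta(a_{pj})t^{pj} = 0$ then yields $a_{pj} \in A^\beta$, giving $\pcnt(B) = A^\beta[t^p]$.

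The argument is essentially mechanical, so the main obstacle is just bookkeeping: one must separate the two contributions to $\{f,a\}$ — the $\alpha$-contribution living in degree $t^i$ and the $\beta$-contribution living in degree $t^{i-1}$ — and keep the signs coming from antisymmetry of $\{,\}$ straight. The characteristic-$p$ phenomenon in part (2) emerges precisely at the step $i a_i = 0$, which in positive characteristic no longer forces $a_i = 0$ when $p \mid i$; this is exactly what accounts for the appearance of the new central variable $t^p$.
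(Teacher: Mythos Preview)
Your proof is correct and follows essentially the same coefficient-by-coefficient approach as the paper's proof, just organized a bit more uniformly: you write down the general formulas for $\{f,t\}$ and $\{f,a\}$ once and read off the conditions, whereas the paper treats the two inclusions in part~(1) separately and invokes the $\NN$-grading by $t$-degree to reduce to homogeneous elements. The mathematical content is identical.
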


\begin{proof}
(1) Let $y \in N(i\alpha)$ for 
some $i \geq 0$. Then for $a \in A$, 
\[
\{ a, yt^i \}
= \{a,y\}t^i + \{a,t^i\}y 
= -i\alpha(a)yt^i + i\alpha(a)t^iy
= 0.
\]
Similarly, $\{t, y t^i\}=0$, whence 
$y t^i\in Z(B)$.

Conversely, suppose $\sum a_it^i \in \pcnt(B)$. 
Note that $B$ is an $\NN$-graded Poisson 
algebra (by $t$-degree). Using this grading, 
we see that $a_it^i \in \pcnt(B)$ for each 
$i$. Then 
\[ 0 = \{ a_it^i, t\} = \{a_i,t\} t^i 
= \alpha(a_i)t^{i+1}.\]
Consequently, $a_i \in A^\alpha$ for each 
$i$. Moreover, if $b \in A$, then
\[ 
0 = \{ a_it^i,b\} = \{a_i,b\} t^i + a_i \{t^i,b\}
= \left( \{a_i,b\} - i\alpha(b)a_i \right)t^i
\]
which implies that $a_i \in N(i\alpha)$.

(2)
Let $z = \sum_{i=0}^n a_i t^i \in \pcnt(B)$
with $a_n \neq 0$. Then
\begin{align*}
0   = \{ z,t \} 
    = \sum \{ a_i, t \} t^i
    = \sum \beta(a_i) t^i.
\end{align*}
So, $\beta(a_i)=0$ for each $i$.
That is, $a_i \in A^\beta$ for each $i$.

Now, let $b \in A$ such that $\beta(b)\neq 0$. Then
\begin{align*}
0  = \{ z, b\} 
= \sum \left( \{a_i,b\}t^i  + a_i\{t^i, b\} \right)
= \sum \left( - i a_i \beta(b) t^{i-1} \right).
\end{align*}
It follows that $i\equiv 0 \mod \ch(\kk)$.
\end{proof}

Let $A$ be a graded Poisson algebra on 
$\kk[x_1,x_2]$. Then, up to isomorphism, 
either $A=P_\bc$ for some $\bc$ or else 
$A$ has the bracket given by 
$\{x_1,x_2\}=x_1^2$. We apply the 
preceding lemma to compute the center 
of the second type, which is sometimes 
called the Poisson Jordan plane.

\begin{example}
\label{xxexa1.6}
Suppose $\kk$ is a field of characteristic $p>0$.
Let $A=\kk[x_1,x_2]$ with Poisson bracket 
given by $\{x_1,x_2\}=x_1^2$.
This is a Poisson Ore extension 
$\kk[x_1][x_2;0,\beta]$ with $\beta(x_1)=x_1^2$.
By Lemma \ref{xxlem1.5}(2), $\pcnt(A)=\kk[x_1]^\beta[x_2^p]$.
Let $f = \sum_{i=0}^n \alpha_i x_1^i \in 
\kk[x_1]^\beta$. Then
\[
0   = \beta(f) 
= \sum_{i=0}^n \alpha_i \beta(x_1^i)
= \sum_{i=1}^n \alpha_i i x_1^{i+1}.
\]
It follows that $\alpha_i \neq 0$ implies 
that $i \equiv 0 \mod p$, and further 
$\pcnt(A)=\kk[x_1^p,x_2^p]$.
\end{example}

\section{The log-ozone group}
\label{xxsec2}

We next proceed to formally introduce 
the log-ozone group of a Poisson algebra. 
As mentioned in the introduction, 
the log-ozone group is defined in terms of 
Poisson normal elements, and so we recall 
this definition now.

\begin{definition}
\label{xxdef2.1}
Let $(A,\{,\})$ be a Poisson algebra. 
\begin{enumerate}[leftmargin=*]
\item[(1)]
An element $f \in A$ is called 
\emph{Poisson normal} if $\{a, f\} \in fA$ 
for all $a \in A$. The set of Poisson 
normal elements in $A$ is denoted $\PN(A)$.
\item[(2)]
Given any regular $f \in \PN(A)$, the 
\emph{log-ozone derivation} associated to 
$f$ is $\delta_f := f\inv \{-, f\}$. It is 
easy to see that $\delta_f \in \PDer(A)$. 
Note that if, further, $A$ is graded and 
$f$ is homogeneous, then $\delta_f \in 
\PDer_0(A)$.
\end{enumerate}
\end{definition}

If $f$ is a unit, then the log-ozone 
derivation $\delta_f$ is also log-Hamiltonian, 
and so log-ozone derivations generalize 
log-Hamiltonian derivations. The 
log-Hamiltonian derivations of a Poisson 
algebra have been studied by several authors 
\cite{LWW3, LWZ17, TWZ}.

\begin{lemma}
\label{xxlem2.2}
Let $(A, \{,\})$ be a Poisson algebra and 
let $f, g \in A$ be regular elements.
\begin{enumerate}
\item[(1)] 
If $f, g \in \PN(A)$, then $fg \in \PN(A)$ 
and $\delta_{fg} = \delta_f + \delta_g$.
\end{enumerate}
Further suppose that $A$ is a UFD.
\begin{enumerate}
\item[(2)] 
If $\gcd(f,g) = 1$ and $fg \in \PN(A)$, 
then $f, g \in \PN(A)$.
\item[(3)] 
If $fg \in \PN(A)$, then $f \in \PN(A)$ 
if and only if $g \in \PN(A)$.
\item[(4)] 
If $f,g \in \PN(A)$, then $\lcm(f,g), 
\gcd(f,g) \in \PN(A)$. 
\item[(5)] 
Suppose $A = \kk[x_1, \dots, x_n]$ and the 
Poisson structure on $A$ is graded. If 
$f,g \in \PN(A)$, then $\{f,g\} = q fg$ for 
some $q \in \kk$ and $\delta_f\delta_g=
\delta_g\delta_f$.
\item[(6)] 
Suppose $A$ is a $G$-graded Poisson algebra 
where $G$ is an ordered abelian group. If 
$f \in \PN(A)$ is written as a sum of 
homogeneous terms $f = f_1 + \cdots + f_m$ 
of distinct degrees, then each $f_i \in 
\PN(A)$ and $\delta_{f_i} = \delta_{f}$.
\end{enumerate}
\end{lemma}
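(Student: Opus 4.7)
The plan is to treat the six parts in order, since each later item builds on the earlier ones, and to flag part~(5) as the delicate step. Parts~(1)--(3) follow from direct manipulation of the Leibniz rule $\{a, fg\} = \{a,f\}g + f\{a,g\}$: for~(1), both summands lie in $fgA$ when $f, g \in \PN(A)$, and dividing through by $fg$ gives $\delta_{fg} = \delta_f + \delta_g$; for~(2), rewriting $\{a, fg\} = fgh$ as $\{a,f\}g = f(gh - \{a,g\})$ shows $f \mid \{a,f\} g$ in the UFD $A$, and coprimality forces $f \mid \{a,f\}$; for~(3), assuming $f \in \PN(A)$, the same expansion gives $f\{a,g\} \in fgA$, and regularity of $f$ yields $g \in \PN(A)$.

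For part~(4), write $f = d f'$ and $g = dg'$ with $d = \gcd(f, g)$ and $\gcd(f', g') = 1$. Expanding $\{a, f\} = \{a, d\} f' + d \{a, f'\}$ and using $\{a, f\} \in fA$ gives $d \mid \{a, d\} f'$, and symmetrically $d \mid \{a,d\} g'$. A prime-by-prime analysis in the UFD---for each prime $p$ with $p^k$ exactly dividing $d$, coprimality of $f'$ and $g'$ forces $p \nmid f'$ or $p \nmid g'$, hence $p^k \mid \{a, d\}$---yields $d \in \PN(A)$. Then $\lcm(f, g) = fg/d$ lies in $\PN(A)$ by parts~(1) and~(3) applied to $fg = d \cdot \lcm(f, g)$.

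Part~(5) is the main difficulty. First invoke part~(6) to reduce to homogeneous $f, g$; then $\{f, g\}$ is homogeneous, and since $\{f, g\} \in fA \cap gA = \lcm(f, g) A$ in the UFD, we may write $\{f, g\} = \lcm(f, g) \cdot c$ with $c$ homogeneous of degree $|\gcd(f, g)|$. Using part~(4) and expanding $\{f, g\} = \{df', dg'\}$ by Leibniz, every cross term contains a factor from $\{\cdot, d\} \in dA$, so $\{f, g\} \in d^2 A$; comparing with $\{f, g\} = d f' g' c$ and tracking divisibility prime-by-prime in the UFD forces $c \in \kk \cdot d$, yielding $\{f, g\} = qfg$ for some $q \in \kk$. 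Commutativity $\delta_f \delta_g = \delta_g \delta_f$ then follows from a short Jacobi identity calculation: applying Jacobi to $\{a, \{f, g\}\}$ together with $\{f, g\} = qfg$ and the additive formula from~(1) shows the commutator $[\delta_f, \delta_g]$ vanishes on every $a \in A$.

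Finally, part~(6) is a matching-of-graded-components argument. For homogeneous $a$, write $\{a, f\} = fh$ and decompose $f = \sum f_i$ and $h = \sum h_j$ into $G$-homogeneous components; since $G$ is ordered, matching degrees in $\sum \{a, f_i\} = \sum_{i, j} f_i h_j$ forces each $\{a, f_i\}$ to equal $f_i$ times a single homogeneous component of $\delta_f(a)$ that is independent of $i$, giving both $f_i \in \PN(A)$ and $\delta_{f_i} = \delta_f$. The hardest step is the divisibility tracking in part~(5): after obtaining $\{f, g\} \in d^2 A$ alongside $\{f, g\} = d f' g' c$, one must carefully handle prime factors shared between $d$ and $f'$ or $g'$, and check that no pathology arises in characteristic~$p$ when irreducible factors have multiplicities divisible by $p$.
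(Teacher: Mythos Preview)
Parts (1)--(3) and (6) are correct and essentially match the paper's arguments, as does the Jacobi-identity computation for $\delta_f\delta_g=\delta_g\delta_f$ at the end of (5). Your approach to (4) is also correct but inverted relative to the paper: you prove $\gcd(f,g) \in \PN(A)$ first via a prime-by-prime argument, whereas the paper observes that for any multiple $m = fh$ of $f$ one has $f \mid \{a,m\}$, so both $f$ and $g$ divide $\{a,\lcm(f,g)\}$ and hence $\lcm(f,g) \in \PN(A)$; then $\gcd(f,g) \in \PN(A)$ follows from (1) and (3) applied to $fg = \lcm(f,g)\gcd(f,g)$.

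Your plan for the first assertion of (5), however, has a genuine gap. From $\{f,g\} \in d^2A$ and $\{f,g\} = df'g'c$ you correctly obtain $d \mid f'g'c$, but the prime-by-prime tracking does not force $c \in \kk d$. If an irreducible $\pi$ divides both $d$ and $f'$ (for instance $f = \pi^{k+1}r$ and $g = \pi^k s$ with $\pi \nmid rs$, so that $\pi^k$ divides $d$ while $\pi$ divides $f'$), then divisibility of $f'g'c$ by $\pi^k$ is automatic and yields no constraint on the $\pi$-content of $c$; the degree condition $\deg c = \deg d$ alone cannot single out $c$ among all homogeneous elements of that degree. You explicitly flag this difficulty but do not resolve it, and I do not see how to close it within the $d^2$-divisibility framework. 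The paper avoids the issue entirely: it first treats the coprime case, where $\deg c = 0$ immediately gives $c \in \kk$; then in general it factors $\lcm(f,g) = \prod f_i^{n_i}$ into distinct irreducibles, argues via (2) and (3) that each $f_i$ is Poisson normal, applies the coprime case to get $\{f_i,f_j\} = d_{ij}f_if_j$, and expands $\{f,g\}$ by Leibniz to read off $q = \sum_{i,j} a_ib_jd_{ij}$. You should replace the $d^2$-divisibility approach with this two-step reduction.
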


\begin{proof}
(1) Suppose $f,g \in \PN(A)$. Then for any 
$a \in A$, 
\[ \{a, fg\} = f\{a, g\} + g\{a, f\} 
= fg \delta_g(a) + fg \delta_f(a) 
= fg(\delta_f(a) + \delta_g(a)).\] 
The result follows.

Now suppose that $A$ is a UFD.

(2) We prove the contrapositive. Suppose 
that $\gcd(f,g) = 1$. Without loss of 
generality, suppose that $f \not \in \PN(A)$ 
so that there exists some $a \in A$ such 
that $f \nmid \{a,f\}$. Note that 
$\{a, fg\} = f\{a,g\} + g\{a, f\}$ and 
since $\gcd(f,g) = 1$, we have 
$f \nmid g \{a,f\}$ and hence 
$f \nmid \{a, fg\}$. Therefore, 
$fg \not \in \PN(A)$.
    
(3) Suppose $fg \in \PN(A)$ and $f \in \PN(A)$. 
Then for all $a \in A$, we have 
$f\{a, g\} = \{a, fg\} - g \{a, f\}$, and 
since $fg$ divides the right-hand side, so 
$fg \mid f\{a,g\}$ and hence $g \mid \{a,g\}$.

(4) Suppose $f,g \in \PN(A)$. Suppose that 
$m$ is any multiple of $f$, say $m = fd$ for 
some $d \in A$. Then for any $a \in A$, we have 
$\{a, m\} = \{a, fd\} = f\{a, d\} + d\{a,f\}$ 
and so $f \mid \{a, m\}$. Similarly, if $n$ is 
any multiple of $g$, then $g \mid \{a, n\}$. 
In particular, we have $f, g \mid 
\{a, \lcm(f,g)\}$, and so $\lcm(f,g) \mid 
\{a, \lcm(f,g)\}$. Therefore, $\lcm(f,g) \in 
\PN(A)$. Now since $fg = \lcm(f,g) \gcd(f,g)$, 
by parts (1) and (3), we have $\gcd(f,g) \in 
\PN(A)$.

(5) Suppose now that $A = \kk[x_1, \dots, x_n]$ 
with $\{, \}$ graded and $f,g \in \PN(A)$. 
First note that if $\gcd(f,g) = 1$, then since 
$\{f,g\}$ is a common multiple of $f$ and $g$ 
of degree $\deg f + \deg g$, therefore 
$\{f,g\} = qfg$ for some $q \in \kk$.

In general, let $\lcm(f,g)$ be $f_1^{n_1}
\cdots f_{s}^{n_s}$ where $f_i$ are distinct 
irreducible elements and $n_i>0$ for 
$i=1,\cdots,s$. By parts (2,3), each $f_i$ 
is Poisson normal, and by the previous 
paragraph, $\{f_i,f_j\}=d_{ij} f_i f_j$ for 
some $d_{ij}\in \kk$ (with $d_{ii}=0$).
Now $f=f_1^{a_1}\cdots f_{s}^{a_s}$
and $g= f_1^{b_1}\cdots f_{s}^{b_s}$. An 
easy computation shows that $\{f,g\}=d fg$ 
where $d=\sum_{i,j=1}^{s} a_i b_j d_{ij}$.

For the last claim, let $a\in A$. Then by 
the Jacobi identity
\begin{align*}
0 &= \{f,\{g,a\}\} + \{f,\{a,g\}\} 
   + \{a,\{f,g\}\} \\
&= \{ f, \delta_g(a)g\} - \{g, \delta_f(a)f\} 
   + q \{a, fg\} \\
&= \delta_f\delta_g(a) fg + q \delta_g(a) fg
   - \delta_g\delta_f(a) fg + q \delta_f(a) fg
   - q \delta_f(a) fg - q \delta_g(a) fg \\
&= (\delta_f\delta_g(a) - \delta_g\delta_f(a)) fg.
\end{align*}
Thus, $\delta_f\delta_g=\delta_g\delta_f$.

(6) Suppose that $a \in A$ is homogeneous. 
Then
\[
\sum_{i=1}^m \{f_i, a\} = \{f,a\} 
= \delta_f(a)f = \delta_f(a)\sum_{i=1}^m f_i.
\]
Since the $f_i$ have distinct degrees, the 
above equations forces that $\delta_{f}(a)$ 
is homogeneous. Consequently, $\{f_i, a\} 
= \delta_f(a) f_i$ for all $i$. Since this 
is true for every homogeneous $a \in A$, 
therefore $f_i \in \PN(A)$ and 
$\delta_{f_i} = \delta_f$.
\end{proof}

Lemma~\ref{xxlem2.2}(1) shows that the set 
of log-ozone derivations has an abelian 
monoid structure. In the case that $\kk$ 
has positive characteristic, then there is, 
in fact, an abelian group structure.

\begin{definition}
\label{xxdef2.3}
Let $A$ be a Poisson algebra over a field 
$\kk$ of characteristic $p > 0$. The 
\emph{log-ozone group} of $A$, denoted 
$\loz(A)$, is defined to be the set of 
log-ozone derivations of $A$. That is,
\[ \loz(A) = \{ \delta_f \mid f \in \PN(A) 
\text{ is regular}\}.\]
\end{definition}

By Lemma~\ref{xxlem2.2}(6), if $A$ is a 
$\ZZ^n$-graded domain and $\delta_f \in 
\loz(A)$, then $f$ may be assumed to be 
$\ZZ^n$-homogeneous.

Henceforth, assume that $\kk$ is a field of 
characteristic $p>0$, so that in all 
subsequent examples, the log-ozone group is, 
in fact, a group. The next example shows 
that the log-ozone group need not be finite.

\begin{lemma}
\label{xxlem2.4}
Let $A$ be a Poisson algebra.
\begin{enumerate}
\item[(1)] 
$\loz(A)$ is an additive subgroup of $\PDer(A)$. 
Consequently, $\loz(A)$ is a $\ZZ/(p)$-vector 
space.
\item[(2)]
If $|\loz(A)|$ is finite, then $|\loz(A)| = 
p^{\ell}$ for some integer $\ell \geq 0$. 
\item[(3)] 
If $A$ is connected $\NN$-graded, then $\loz(A)$ 
is a subgroup of $\PDer_0(A)$ and a subgroup of 
$HP^1(A)$, the first Poisson cohomology group.
\end{enumerate}
\end{lemma}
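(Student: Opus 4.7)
\emph{Proof plan.} For (1), the plan is to combine multiplicativity, $\delta_{fg} = \delta_f + \delta_g$ from Lemma~\ref{xxlem2.2}(1), with a characteristic-$p$ trick to build additive inverses. Closure under addition is immediate. For the inverse of $\delta_f$, I would exploit that
\[
\{a, f^p\} = p f^{p-1}\{a, f\} = 0 \quad \text{in characteristic } p,
\]
so $f^p \in Z(A)$ and hence $\delta_{f^p} = 0$; but iterating Lemma~\ref{xxlem2.2}(1) also gives $\delta_{f^p} = p\delta_f$, forcing $p\delta_f = 0$. Consequently $-\delta_f = (p-1)\delta_f = \delta_{f^{p-1}} \in \loz(A)$, and $\loz(A)$ is an additive subgroup of $\PDer(A)$ all of whose elements are killed by $p$, i.e., a $\ZZ/(p)$-vector space. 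Part (2) then follows immediately, since any finite $\ZZ/(p)$-vector space has order $p^{\ell}$.

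For (3), the containment $\loz(A) \subseteq \PDer_0(A)$ will follow from Lemma~\ref{xxlem2.2}(6): any regular $f \in \PN(A)$ decomposes into homogeneous Poisson normal pieces $f_i$ with $\delta_{f_i} = \delta_f$, and since the graded Poisson bracket is of degree $0$ by the paper's convention, each $\delta_{f_i}$ preserves degree. The more substantive step is the embedding $\loz(A) \hookrightarrow HP^1(A) = \PDer(A)/\mathrm{Ham}(A)$, which amounts to showing no nonzero log-ozone derivation is Hamiltonian. My plan is: assume $\delta_f = \{h, -\}$, decompose $h = \sum_j h_j$ into homogeneous components so that each $\{h_j, -\}$ is a derivation of degree $j$, and match against the degree-$0$ derivation $\delta_f$; this forces $\delta_f = \{h_0, -\}$ with $h_0 \in A_0 = \kk$, whence $\delta_f = 0$ by $\kk$-bilinearity of the bracket.

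The connectedness hypothesis $A_0 = \kk$ is essential for the last step and is where it enters the argument. This $HP^1$-injectivity piece is the only conceptual subtlety; everything else is a formal consequence of Lemma~\ref{xxlem2.2} together with the ``Frobenius centralizes everything'' observation in characteristic $p$.
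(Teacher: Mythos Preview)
Your proposal is correct and follows essentially the same approach as the paper's proof: both use Lemma~\ref{xxlem2.2}(1) together with the observation $\delta_{f^p}=0$ to get the group structure in (1) and (2), and both use Lemma~\ref{xxlem2.2}(6) to land in $\PDer_0(A)$ for (3). Your treatment of the $HP^1$ embedding is the same argument as the paper's one-line remark that ``in degree $0$ there are no nonzero Hamiltonian derivations,'' just with the homogeneous decomposition of $h$ spelled out explicitly.
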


\begin{proof}
As mentioned previously, Lemma~\ref{xxlem2.2}(1) 
shows that there is an abelian monoid structure 
on $\loz(A)$. Since, for any regular element 
$f \in \PN(A)$, we have $\delta_{f^p} = 0$, 
therefore $\delta_{f^{p-1}} = - \delta_f$ and 
so $\loz(A)$ is an abelian group. This proves 
(1) and (2) is clear.

For (3), suppose $A$ is connected $\NN$-graded. 
If $f$ is homogeneous, it is clear that $\delta_f 
\in \PDer_0(A)$. By Lemma~\ref{xxlem2.2}(6), 
every element of $\loz(A)$ is of the form 
$\delta_f$ for some homogeneous $f$, and so 
$\loz(A) \subseteq \PDer_0(A)$. The first 
Poisson cohomology $HP^1(A)$ is defined  to be 
$\PDer(A)/\{\text{Hamiltonian derivations}\}$. 
In degree $0$, there are no nonzero 
Hamiltonian derivations (since $A_0=\kk$), 
so $\loz(A)$ is also a subgroup of $HP^1(A)$.
\end{proof}

\begin{example}
\label{xxexa2.5}
Let $A = \kk[x_1, x_2, \dots]$ be a polynomial 
ring in infinitely many variables with 
skew-symmetric Poisson structure. That is, 
for each $i, j \in \NN$ choose 
$c_{ij} = - c_{ji} \in \kk$ and set 
$\{x_i, x_j\} = c_{ij} x_i x_j$. Further 
suppose the $c_{ij}$, for $i\neq j$, 
are all nonzero. Then 
$\delta_{x_i} \neq \delta_{x_j}$ for any 
$i \neq j$ (since they differ on $x_i$). 
Hence, $\loz(A)$ is infinite.
\end{example}

In this paper, we will focus mostly on 
polynomial Poisson algebras (in finitely 
many variables). We compute a first example 
of the log-ozone group of such a Poisson 
algebra.

\begin{example}
\label{xxexa2.6}
Suppose $p \neq 2$ and fix $c \in 
\kk^\times$. Let $P = \kk[x_1,x_2]$ be 
the polynomial Poisson algebra with 
Poisson bracket $\{x_1, x_2\} = c x_1 x_2$. 
Suppose that $x_1^ix_2^j \in \pcnt(P)$. Then
\[ 
0 = \{ x_1, x_1^ix_2^j \} 
= x_1^i \{ x_1, x_2^j \} 
= j x_1^i x_2^{j-1} \{x_1,x_2\}
= j c x_1^{i+1} x_2^j.\]
Hence, $p \mid j$. A similar argument shows 
$p \mid i$. This proves that 
$\pcnt(P)=\kk[x_1^p,x_2^p]$.

On the other hand, it is not difficult to 
verify that $\loz(P)$ is generated by 
$\delta_{x_1}$ and $\delta_{x_2}$ and 
$|\loz(P)|=p^2=\rk_{Z(P)}(P)$. Also note that 
$Z(P)=\ker \delta_{x_1}\cap \ker \delta_{x_2}$. 
\end{example}

Examples \ref{xxexa1.6} and \ref{xxexa2.6} 
show that the center of a graded Poisson 
algebra on $\kk[x_1,x_2]$ is necessarily 
regular. 

We now introduce some basic results on the 
log-ozone group, including some techniques 
for computing it in certain examples. The 
next proposition is a Poisson version of 
\cite[Lemma 1.12]{CGWZ3}.

\begin{proposition}
\label{xxpro2.7}
Let $(A,\{,\}_A)$ and $(B,\{,\}_B)$ be 
connected graded Poisson domains. Then 
$\loz(A \tensor B) \cong \loz(A) \times \loz(B)$. 
In particular, $\loz(A[t]) \cong  \loz(A)$.
\end{proposition}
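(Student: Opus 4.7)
The plan is to show that the natural map
$\Phi: \loz(A) \times \loz(B) \to \loz(A \otimes B)$ sending $(\delta_u, \delta_v) \mapsto \delta_{u \otimes v}$ is an isomorphism. For $u \in \PN(A)$ and $v \in \PN(B)$, the tensor-product bracket gives
\[
\{a \otimes b,\, u \otimes v\} = (u \otimes v)\bigl(\delta_u(a) \otimes b + a \otimes \delta_v(b)\bigr),
\]
so $u \otimes v \in \PN(A \otimes B)$ and $\delta_{u \otimes v}(a \otimes b) = \delta_u(a) \otimes b + a \otimes \delta_v(b)$. By Lemma~\ref{xxlem2.2}(1), $\Phi$ is a well-defined group homomorphism. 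Injectivity is immediate: if $\delta_{u \otimes v} = 0$, evaluating on $A \otimes 1$ forces $\delta_u = 0$, and evaluating on $1 \otimes B$ forces $\delta_v = 0$.

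The main step is surjectivity. First I would endow $A \otimes B$ with the $\ZZ^2$-grading $(A \otimes B)_{(i,j)} = A_i \otimes B_j$; since the brackets of $A$ and $B$ are graded of degree $0$, the induced bracket on $A \otimes B$ is bihomogeneous of bidegree $(0,0)$. Applying Lemma~\ref{xxlem2.2}(6) with the lexicographic order on $\ZZ^2$, any $\delta_f \in \loz(A \otimes B)$ is represented by a bihomogeneous element $f \in A_i \otimes B_j$ for some $i,j \geq 0$.

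Next I would show that such a $\delta_f$ preserves each tensor factor. For homogeneous $c \in A_d$, the bracket $\{c \otimes 1, f\}$ is bihomogeneous of bidegree $(d+i, j)$ and, by Poisson normality, equals $fg$ for a unique $g \in A \otimes B$ (unique by regularity of $f$). Decomposing $g$ into bihomogeneous parts and reapplying regularity of $f$ forces $g$ to be bihomogeneous of bidegree $(d,0)$, so $g \in A_d \otimes \kk$. Thus $\delta_f(A \otimes 1) \subseteq A \otimes 1$ and the restriction $\delta_f^A$ is a Poisson derivation of $A$; symmetrically $\delta_f^B$ is a Poisson derivation of $B$. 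To identify $\delta_f^A$ as a log-ozone derivation, write $f = \sum_{k=1}^m a_k \otimes b_k$ with $\{a_k\} \subseteq A_i$ and $\{b_k\} \subseteq B_j$ both $\kk$-linearly independent. Expanding $\{c \otimes 1, f\} = \delta_f^A(c) \cdot f$ and comparing coefficients against the $A$-linearly independent family $\{1 \otimes b_k\}$ yields
\[
\{c, a_k\}_A = \delta_f^A(c)\, a_k \qquad \text{for every $k$ and every $c \in A$.}
\]
Hence each $a_k$ is Poisson normal in $A$ with $\delta_{a_k} = \delta_f^A$, and taking $u := a_1$ (nonzero, hence regular in the domain $A$) realizes $\delta_f^A = \delta_u \in \loz(A)$. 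Symmetrically $\delta_f^B = \delta_v \in \loz(B)$ for some $v \in \PN(B)$. Since $\delta_f$ and $\delta_{u \otimes v}$ are Poisson derivations that agree on the generating subalgebras $A \otimes 1$ and $1 \otimes B$, they coincide, proving $\delta_f = \Phi(\delta_u, \delta_v)$.

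The main obstacle is the factor-preservation step: the rest is straightforward linear algebra in the tensor product, but factor preservation depends crucially on both the bihomogeneity of $f$ (granted by Lemma~\ref{xxlem2.2}(6)) and the regularity of $f$, which together pin down the quotient $\{c \otimes 1, f\}/f$ inside a single bi-degree. The ``in particular'' clause follows by taking $B = \kk[t]$ with its (necessarily) trivial Poisson bracket, so that $\loz(\kk[t]) = 0$ and the theorem reduces to $\loz(A[t]) \cong \loz(A)$.
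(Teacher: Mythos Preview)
Your proof is correct and follows essentially the same route as the paper's: both use the $\ZZ^2$-grading together with Lemma~\ref{xxlem2.2}(6) to reduce to a bihomogeneous Poisson normal element $f$, observe that $\delta_f$ then preserves each tensor factor, and finally write $f=\sum_k a_k\otimes b_k$ with the $a_k$ and $b_k$ linearly independent to identify the restrictions as $\delta_{a_k}$ and $\delta_{b_k}$. Your factor-preservation step (via regularity of $f$ and a bidegree count on $\{c\otimes 1,f\}=f\cdot\delta_f(c\otimes 1)$) is just a more explicit version of the paper's one-line appeal to $\delta_f$ having bidegree $(0,0)$.
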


\begin{proof}
If $u \in \PN(A)$, then $u \tensor 1 \in 
\PN(A \tensor B)$, so $\delta_{u\otimes 1}
=\delta_{u}\otimes Id_{B}\in 
\loz(A \tensor B)$. A similar argument 
with $\PN(B)$ makes it clear $\loz(A) 
\times \loz(B) \subset \loz(A \tensor B)$. 
Now suppose that $\partial:=\delta_{f} 
\in \loz(A \tensor B)$ where $0\neq f=
\sum_{i=1}^{n} a_i \otimes b_i\in A\tensor B$
for some $n\geq 1$. By Lemma \ref{xxlem2.2}(6), 
we can assume that $a_i$ are homogeneous of 
the same degree and that $b_i$ are homogeneous 
of the same degree. We may further assume that 
$\{a_1,\cdots, a_n\}$ (resp. 
$\{b_1, \cdots, b_n\}$) are linearly 
independent. For any $x\in A$,
because $\partial \in \PDer_0(A \tensor B)$, then 
$\partial(x \tensor 1) \in A \tensor B_0 
= A \tensor \kk$. Thus, $\partial$ restricts
to a Poisson derivation of $A$ which is denoted 
by $\partial_1$. Hence
\[\left(\sum_{i=1}^{n} a_i\tensor b_i\right)
(\partial_1(x)\tensor 1)=f \delta_{f}(x\tensor 1)=
\{f, x\tensor 1\}=\sum_{i=1}^{n}\{a_i, x\}
\otimes b_i,\]
which implies that $\partial_1(x)=\delta_{a_i}
\in \loz(A)$ for every $i$. Similarly, the 
restriction of $\partial$ onto $B$, denoted 
by $\partial_2$, is $\delta_{b_i}\in \loz(B)$ 
for all $i$. This implies that
$\partial=\delta_{a_i}\otimes \Id_{B}+ 
\Id_{A}\otimes  \delta_{b_j}$ which is identified 
with $(\delta_{a_i},\delta_{b_j})
\in\loz(A)\times \loz(B)$ for any 
pair $(i,j)$. 
\end{proof}

The next result concerns Poisson Ore extensions 
as defined in Section \ref{xxsex1.2}.

\begin{lemma}
\label{xxlem2.8}
Let $A$ be a Poisson domain and let 
$B=A[t;\alpha,\beta]_P$ be a Poisson Ore 
extension.
\begin{enumerate}
\item[(1)]  
If $\beta=0$, then $\loz(B)$ is generated as 
group by $\alpha$ and
\[ \{ \delta_{f} : \delta_f \in \loz(A) 
\text{ and } f \text{ divides } \alpha(f)\}.\]
\item[(2)]  
If $\alpha=0$, then 
\[ \loz(B) = \{ \delta_{f} : \delta_f \in 
\loz(A) \text{ and } f \text{ divides } 
\beta(f)\}.\]
\end{enumerate}
\end{lemma}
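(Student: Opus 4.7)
My plan is to prove both parts by analyzing a Poisson normal regular element $F = \sum_{i=0}^{n} f_i t^i \in B$ (with $f_i \in A$ and $f_n \neq 0$) via its top $t$-coefficient. In both parts, the easy containment ($\supseteq$) reduces to verifying $f \in \PN(B)$: for $a \in A$, $\{a,f\} \in fA$ is automatic from $f \in \PN(A)$, while $\{t,f\} = -\alpha(f)t$ in part (1) (resp.\ $\{t,f\} = -\beta(f)$ in part (2)) lies in $fB$ precisely when $f \mid \alpha(f)$ (resp.\ $f \mid \beta(f)$) in $A$.

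For (1), a direct computation yields $\{at^i, bt^j\} = (\{a,b\} + j\alpha(a)b - i\alpha(b)a)\, t^{i+j}$, so the Poisson bracket is homogeneous of degree zero under the $\NN$-grading on $B = \bigoplus_i At^i$ by $t$-degree. Regarding this as a grading by the ordered group $\ZZ$, Lemma~\ref{xxlem2.2}(6) allows me to assume $F = ft^i$ is homogeneous. Since $t \in \PN(B)$ with $\delta_t|_A = \alpha$ and $\delta_t(t) = 0$, the (extended) derivation $\alpha$ lies in $\loz(B)$. Analyzing $\{a, ft^i\}$ for $a \in A$ forces $f \in \PN(A)$ with $\delta_f^A = \delta_F|_A - i\alpha$, and analyzing $\{t, ft^i\} = -\alpha(f)t^{i+1}$ forces $f \mid \alpha(f)$ in $A$. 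Consequently $\delta_F = \delta_f + i\alpha$.

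For (2) the bracket fails to be $t$-graded, so I would argue by leading $t$-coefficients directly. For $a \in A$,
\[
\{a, F\} = \sum_i \{a, f_i\}\, t^i + \sum_i i\beta(a) f_i\, t^{i-1},
\]
while $\{a, F\} = \delta_F(a) \, F$. Matching top $t$-degrees forces $\delta_F(a) \in A$, and the coefficient of $t^n$ yields $\{a, f_n\} = \delta_F(a)\, f_n$, so $f_n \in \PN(A)$ with $\delta_F|_A = \delta_{f_n}^A$. A parallel analysis of $\{t, F\} = -\sum_i \beta(f_i)\, t^i$ against $\delta_F(t) \, F$ shows $\delta_F(t) \in A$ with $-\beta(f_n) = \delta_F(t)\, f_n$, giving both $f_n \mid \beta(f_n)$ and $\delta_F(t) = \delta_{f_n}(t)$. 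Since $B$ is generated as a Poisson algebra by $A$ and $t$, the Poisson derivations $\delta_F$ and (the extension of) $\delta_{f_n}$ agree on $B$.

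The main subtlety I expect is in the degenerate cases of the leading-coefficient analysis in part (2): when $\{a, f_n\} = 0$ for all $a \in A$ (i.e.\ $f_n$ is in the Poisson center of $A$) or when $\beta(f_n) = 0$, the relevant top $t$-coefficients vanish identically, and one must argue separately that the corresponding leading term of $\delta_F(a)$ or $\delta_F(t)$ also vanishes. This is achieved by noting that $\delta_F(a)\, F$ has $t$-degree at least $n$ whenever $\delta_F(a) \neq 0$, forcing the required vanishing and keeping $\delta_F = \delta_{f_n}$ consistent with the claim.
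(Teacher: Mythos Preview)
Your proof is correct and follows essentially the same approach as the paper's: in part (1) you reduce to a $t$-homogeneous element $ft^i$ via Lemma~\ref{xxlem2.2}(6) and read off $\delta_F = \delta_f + i\alpha$, and in part (2) you compare leading $t$-coefficients to identify $\delta_F$ with $\delta_{f_n}$. Your ``subtlety'' paragraph is actually a point the paper glosses over---the paper simply asserts that comparing $t^k$-coefficients gives $\partial(a)\in A$, whereas you spell out the underlying degree argument (that $\delta_F(a)\,F$ has $t$-degree $\geq n$ when $\delta_F(a)\neq 0$).
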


\begin{proof}
(1) By abuse of notation, extend $\alpha$ to a 
derivation of $B$ by defining $\alpha(t) = 0$. 
Then it is clear that $\alpha \in \loz(B)$ 
since $\alpha=\delta_t$. Now suppose $\delta_f 
\in \loz(A)$ for some $f$ which divides 
$\alpha(f)$. Let $\sum a_i t^i \in B$. Then
\begin{align*}
\left\lbrace \sum a_i t^i, f \right\rbrace 
&= \sum \left( \{ a_i, f\}t^i + a_i\{t^i, f\} 
   \right) \\
&= \sum \left( \delta_f(a_i) f - i a_i 
   \alpha(f) \right) t^i\\
&= f \sum \left( \delta_f(a_i) t^i 
  - i a_i f\inv \alpha(f) t^{i} \right).
\end{align*}
Thus, $f \in \PN(B)$ and so $\delta_f \in \loz(B)$.

Conversely, let $\partial \in \loz(B)$ and 
suppose $\partial = \delta_g$ where $g \in \PN(B)$.
By Lemma~\ref{xxlem2.2}(6), we may assume that 
$g = ft^i$ for some $f \in A$ and $i \in \NN$. If 
$r \in A$, then 
\[ \partial(r) f t^i
= \partial(r) g
= \{ r,g \} 
= \{ r, f t^i \}
= \left( \{r,f\} + i f \alpha(r) \right) t^i.
\]
Consequently, $\partial(r) f = \{r,f\} + i f 
\alpha(r)$. Since $f$ divides the left-hand side, 
then $f$ divides $\{r, f\}$. That is, $f \in \PN(A)$ 
and $\partial(r) = \delta_f(r) + i\alpha(r)$. 
Similarly,
\[ \partial(t) f t^i 
= \partial(t) g
= \{ t, g \} 
= \{ t, f t^i \}
= -\alpha(f) t^{i+1}.
\]
Thus $\partial(t) = - f\inv \alpha(f) t$. It 
follows that $f$ divides $\alpha(f)$. Now notice 
that $\delta_f(t) = f\inv \{f, t\} = -f\inv 
\alpha(f) t$ and so $\partial 
= \delta_f + i \alpha$. 
The result follows.

(2) Suppose $f \in \PN(A)$ and $f\mid \beta(f)$. 
Let $\sum a_i t^i \in B$. Then
\begin{align*}
\left\lbrace \sum a_i t^i, f \right\rbrace 
= \sum \left( \{ a_i, f\}t^i + a_i\{t^i, f\} \right)
= f \sum \left( \delta_f(a_i) t^i - i a_i f\inv 
\beta(f) t^{i-1} \right).
\end{align*}
Thus, $f \in \PN(B)$ and so $\delta_f \in \loz(B)$.
Conversely, let $\partial \in \loz(B)$ induced by
$f = \sum_{i=0}^k f_i t^i \in \PN(B)$ with 
$f_k \neq 0$. If $a \in A$, then
\[ \partial(a) \sum_{i=0}^k f_i t^i 
= \{ a,f \} 
= \sum_{i=0}^k \{ a, f_i t^i \}
= \sum_{i=0}^k \left( \{a,f_i\} t^i 
+ i f_i \beta(a) t^{i-1} \right).
\]
Comparing coefficients of $t^k$ gives 
that $\partial(a)\in A$ and that
$\partial(a)f_k = \{a,f_k\}$, so 
$\partial(a) = \delta_{f_k}(a)$.
Similarly,
\[ \partial(t) \sum_{i=0}^k f_i t^i 
= \{ t,f \} 
= \sum_{i=0}^k \{ t, f_i t^i \}
= \sum_{i=0}^k -\beta(f_i) t^i.
\]
Again comparing coefficients of $t^k$ gives 
$\partial(t) f_k = -\beta(f_k)$, so 
$\partial(t) = -f_k\inv \beta(f_k) \in A$. 
Hence, $\partial = \delta_{f_k}$
and $f_k \mid \beta(f_k)$.
\end{proof}

\subsection{The structure of the log-ozone group}
\label{xxsec2.1}

Let $A$ be a Poisson domain.
Let $Z$ denote its Poisson center. For each $\delta\in \loz(A)$ 
define 
\[ 
A_{\delta} = \{0\} \cup \{f \in A \mid 
\delta_f = \delta\}.
\]
Let 
\[
A_{\loz} = \sum_{\delta \in \loz(A)} A_{\delta} 
\subseteq A,
\]
where the sum occurs inside of $A$, viewed as a 
$Z$-module. It is clear that $A_0 = Z$ and that 
each $A_{\delta}$ is a $Z$-submodule of $A$.

\begin{definition}
\label{xxdef2.9}
Let $A$ be a Poisson domain.
\begin{enumerate}
\item[(1)]
We say $A$ is {\it inferable} if every 
$\delta\in \loz(A)$ is diagonalizable when 
acting on $A$.
\item[(2)]
We say $A$ is {\it quasi-inferable} if 
every nonzero $\delta\in \loz(A)$ has a 
nonzero eigenvalue when acting on $A$. 
\item[(3)]
We say $A$ is {\it loz-decomposable} if 
$A_{\loz} = \bigoplus_{\delta \in \loz(A)} 
A_{\delta}$.
\end{enumerate}
\end{definition}

\begin{lemma}
\label{xxlem2.10}
Assume $\kk$ is algebraically closed.
Suppose that $A$ is a graded polynomial 
Poisson algebra $\kk[x_1,\cdots,x_n]$.
\begin{enumerate}
\item[(1)]
If $A$ is inferable, then it is 
quasi-inferable.
\item[(2)]
If $A$ is quasi-inferable, then it is 
loz-decomposable.
\end{enumerate}
\end{lemma}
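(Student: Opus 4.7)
For (1), if $\delta \in \loz(A)$ is nonzero then its diagonalizability forces at least one nonzero eigenvalue on $A$, since otherwise the eigenvector basis would give $\delta = 0$; so $A$ is automatically quasi-inferable.

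For (2), my plan is a proof by contradiction via a minimal counterexample. Suppose for contradiction that some nontrivial relation $\sum_{i=1}^m f_i = 0$ holds with $f_i \in A_{\delta_i} \setminus \{0\}$ and pairwise distinct $\delta_i \in \loz(A)$. By Lemma~\ref{xxlem2.2}(6), the homogeneous components of any Poisson normal element share its log-ozone derivation, so projecting onto a single total degree I may further assume all $f_i$ are homogeneous of a common degree $d$. Among all such homogeneous nontrivial relations, choose one with $m \geq 2$ as small as possible. The argument then takes place inside the finite-dimensional $\kk$-space $A_d$, on which every element of $\loz(A) \subseteq \PDer_0(A)$ acts, and all of these operators commute by Lemma~\ref{xxlem2.2}(5).

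The key step is constructing a single $v^\ast \in A$ that is a simultaneous eigenvector of $\delta_2 - \delta_1, \ldots, \delta_m - \delta_1$ with scalar eigenvalues $\mu_2, \ldots, \mu_m \in \kk$ and $\mu_2 \neq 0$. Quasi-inferability, applied to $\delta_2 - \delta_1 \neq 0$, yields a homogeneous eigenvector $v \in A$ with eigenvalue $\mu_2 \neq 0$; its eigenspace inside $A_{\deg v}$ is preserved by each of the remaining commuting operators $\delta_i - \delta_1$. Since $\kk$ is algebraically closed, each such operator then admits a genuine eigenvector in the current invariant subspace, and iterating this refinement through $\delta_3 - \delta_1, \ldots, \delta_m - \delta_1$ produces the desired $v^\ast$. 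I then apply $\{v^\ast, -\}$ to the relation $\sum f_i = 0$; Poisson normality of each $f_i$ gives $\{v^\ast, f_i\} = \delta_i(v^\ast) f_i$, so $\sum \delta_i(v^\ast) f_i = 0$. Subtracting $\delta_1(v^\ast)$ times the original relation and using that $A$ is a domain with $v^\ast \neq 0$ produces the shorter relation $\sum_{i=2}^m \mu_i f_i = 0$ in $A_d$, with $\mu_2 f_2 \neq 0$, contradicting the minimality of $m$.

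The main obstacle is producing $v^\ast$ with the nonzero scalar eigenvalue $\mu_2$. Without quasi-inferability the operator $\delta_2 - \delta_1$ could act nilpotently on $A$, giving only generalized eigenvectors with eigenvalue zero and forcing $\mu_2 = 0$, which would collapse the inductive reduction. Algebraic closure of $\kk$ is equally essential, guaranteeing that each stage of the descent through the commuting operators produces an actual eigenvector rather than a merely generalized one, so that the resulting coefficients $\mu_i$ really are scalars.
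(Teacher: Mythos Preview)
Your proof is correct and follows essentially the same strategy as the paper's: a minimal-counterexample argument, producing a common eigenvector of the commuting log-ozone derivations (with quasi-inferability guaranteeing one eigenvalue is nonzero), then bracketing against it to obtain a shorter relation. The only cosmetic differences are that the paper first normalizes to $\delta_1 = 0$ by multiplying the relation through by $f_1^{p-1}$ (whereas you work directly with the differences $\delta_i - \delta_1 \in \loz(A)$), and the paper locates its common eigenvector in $A_1$ rather than in an arbitrary graded piece.
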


\begin{proof}
(1) This is clear since a diagonalizable 
nonzero linear map has at least one nonzero 
eigenvalue. 

(2) If $\loz(A)$ is trivial, the result 
is clear, so we may assume that $\loz(A)$ 
is nontrivial. It suffices to show that a 
sum of nonzero elements from distinct 
$A_{\delta}$ is nonzero. Suppose, for 
contradiction, that this is not the case, 
and choose a minimal $n$ such that we may 
choose distinct 
$\delta_1, \dots, \delta_n \in \loz(A)$ 
and nonzero $f_i \in A_{\delta_i}$ so that 
$\sum_{i = 1}^n f_i = 0$.

Write
\[
-f_1 = \sum_{i=2}^{n} f_i.
\]
After multiplying by $f_1^{p-1}$, we may 
assume that $f_1$ is Poisson central and 
none of $f_2, \dots, f_{n}$ are Poisson 
central.

By Lemma~\ref{xxlem2.2}(5), $\delta_i 
\delta_j = \delta_j \delta_ i$ for all 
$2\leq i,j\leq n$ (with $\delta_{1}=0$). 
Since $A$ is quasi-inferable, $\delta_n$ 
has a nonzero eigenvalue $\lambda_n$. In 
fact, since $A$ is graded, each $\delta_i$ 
acts on $A_1$ and we may assume that 
$\lambda_n$ is an eigenvalue 
$\delta_n|_{A_1}$ (since if $\delta_n|_{A_1}$ 
has all zero eigenvalues, then it will 
have zero eigenvalues on all of $A$). Let 
$V\subseteq A_1$ be the $\lambda_n$ 
eigenspace. Since the $\delta_i$'s commute, 
they all act on $V$. Viewing the $\delta_i$ 
as linear maps $V \to V$, since we are 
assuming $\kk$ is algebraically closed, we 
see that they are simultaneously 
triangularizable. In particular, they all 
share a common eigenvector $x\in V$. More 
precisely, we have $\delta_i(x)=\lambda_i x$ 
for $i=1,\ldots,n$ for some 
$\lambda_1,\ldots,\lambda_{n-1}\in\kk$, 
and $\lambda_n\in \kk^\times$ (as chosen 
above). Then
\[
0 = \left\{ -f_1, x\right\} 
= \sum_{i=2}^{n} \left\{ f_i, x \right\} 
= \sum_{i=2}^{n} -f_i \delta_i(x) 
= \sum_{i=2}^{n} -\lambda_i f_i x.
\]
Since $A$ is a domain, $\sum_{i=2}^{n} 
\lambda_i f_i=0$. Since $\lambda_n \neq 0$, 
this contradicts the minimality of $n$.
\end{proof}

Example~\ref{xxexa2.14} below shows that 
a graded polynomial Poisson algebra may be 
loz-decomposable but not quasi-inferable.

\begin{lemma}
\label{xxlem2.11}
Suppose $A$ is a graded polynomial Poisson 
algebra $\kk[x_1, \dots, x_n]$ and assume 
that $A$ is loz-decomposable. Then
\begin{enumerate}
\item[(1)] 
$A_{\loz}$ is a subalgebra of $A$, so 
$A_{\loz}$ is a $\loz(A)$-graded $\kk$-algebra.
\item[(2)] 
For each $\delta \in \loz(A)$, $A_{\delta}$ 
is a rank one $Z$-module. Hence, 
$|\loz(A)|=\rk_{Z}(A_{\loz}) \leq \rk_Z(A)$.
\item[(3)] 
If $A$ is generated as an algebra by Poisson 
normal elements, then $|\loz(A)| = \rk_Z(A)$.
\item[(4)]
$\rk_{Z}(A) = p^m$ for some $m \leq n$.
\end{enumerate}
\end{lemma}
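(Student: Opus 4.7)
For part (1), I would apply Lemma~\ref{xxlem2.2}(1) directly: if $f \in A_\delta$ and $g \in A_{\delta'}$ are nonzero, then $fg$ is Poisson normal with $\delta_{fg} = \delta_f + \delta_g = \delta + \delta'$, so $A_\delta \cdot A_{\delta'} \subseteq A_{\delta+\delta'}$. Combined with loz-decomposability, this shows $A_{\loz}$ is a $\loz(A)$-graded subalgebra. Part (3) is then a short consequence: if $A$ is generated as an algebra by Poisson normal elements $y_1, \ldots, y_k$, iterating Lemma~\ref{xxlem2.2}(1) shows that every monomial in the $y_i$ is Poisson normal, hence lies in some $A_\delta$. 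Thus $A = A_{\loz}$, and part (2) gives $|\loz(A)| = \rk_Z(A)$.

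The substantive step is part (2), and it hinges on a characteristic-$p$ trick. For nonzero $f, g \in A_\delta$, the product $gf^{p-1}$ is Poisson normal with $\delta_{gf^{p-1}} = \delta_g + (p-1)\delta_f = p\delta = 0$, so $gf^{p-1} \in A_0 = Z$. Since also $f^p \in Z$ (by the same argument applied to $f$ alone), inside $A \otimes_Z Q(Z) \subseteq Q$ we obtain the identity
\[
g \;=\; \frac{gf^{p-1}}{f^p}\, f,
\]
which shows that the image of $A_\delta$ lies in the rank-one $Q(Z)$-subspace spanned by $f$. Picking any nonzero $f \in A_\delta$ proves $\rk_Z(A_\delta) = 1$. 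Loz-decomposability then yields $\rk_Z(A_{\loz}) = \sum_{\delta} \rk_Z(A_\delta) = |\loz(A)|$, and since $A_{\loz}$ is a $Z$-submodule of the torsion-free $Z$-module $A$, we have $\rk_Z(A_{\loz}) \leq \rk_Z(A)$.

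For part (4), the essential observation is that in characteristic $p$, $\{b^p,a\} = p\, b^{p-1}\{b,a\} = 0$ for all $a, b \in A$, so $b^p \in Z$ for every $b \in A$. Applied to $b = x_i$, this gives $\kk[x_1^p, \ldots, x_n^p] \subseteq Z$, so $A$ is a finitely generated $Z$-module (in fact free of rank $p^n$ over $\kk[x_1^p,\ldots,x_n^p]$). Since every $b \in A$ becomes invertible in $A \otimes_Z Q(Z)$ (as $b^p \in Z$), we get $A \otimes_Z Q(Z) = Q$, and hence $\rk_Z(A) = [Q:Q(Z)]$. Each $x_i$ satisfies $x_i^p \in Q(Z)$, so $Q/Q(Z)$ is a purely inseparable extension and therefore has $p$-power degree; since this degree divides $[Q : \kk(x_1^p,\ldots,x_n^p)] = p^n$, we conclude $\rk_Z(A) = p^m$ for some $0 \leq m \leq n$. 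The main conceptual hurdle is part (2), where one must recognize the characteristic-$p$ trick $\delta_{gf^{p-1}} = 0$ to connect $A_\delta$ with $Z$; the remaining parts then reduce to direct bookkeeping with the earlier lemmas and the standard theory of purely inseparable field extensions.
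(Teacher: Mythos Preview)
Your proof is correct and follows essentially the same line as the paper's: part (1) via Lemma~\ref{xxlem2.2}(1), the key characteristic-$p$ trick $f^{p-1}g \in A_0 = Z$ for part (2), part (3) as an immediate consequence, and part (4) via the inclusion $\kk[x_1^p,\ldots,x_n^p] \subseteq Z \subseteq A$. The only minor difference is that in (4) the paper appeals directly to multiplicativity of rank, $\rk_C(A) = \rk_C(Z)\cdot\rk_Z(A)$ with $C = \kk[x_1^p,\ldots,x_n^p]$, while you phrase the same fact through the purely inseparable tower $Q(Z) \subseteq Q(A)$; both are straightforward.
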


\begin{proof}
(1) This follows immediately from 
Lemma~\ref{xxlem2.2}(1,5).

(2) Let $Q = Z(Z\setminus\{0\})\inv$ be the 
field of fractions of $Z$. Fix 
$\delta \in \loz(A)$ and let 
$A^Q_{\delta} : = A_{\delta} \otimes_{Z} Q$ 
so $\rk_Z(A_{\delta}) = \rk_Q(A_{\delta}^Q)$. 
For any $f,g \in A_{\delta}$, 
$f^{p-1}g \in A_0 = Z$. Hence, 
$f^p(f^{p-1}g)\inv = g\inv f \in Q$. 
Therefore, $A_{\delta}^Q$ is a rank one 
$Q$-module, as desired. Then 
$|\loz(A)| = \rk_Z(A_{\loz})$ follows 
the definition of loz-decomposability.

(3) If $A$ is generated by Poisson normal 
elements, then $A = A_{\loz}$ and since 
$A = A_{\loz} = 
\bigoplus_{\delta \in \loz(A)} A_{\delta}$ 
is a direct sum of rank one $Z$-modules, 
the result follows.

(4) Let $C = \kk[x_1^p,\ldots, x^p_n]$. 
It is easy to see that $C$ is a Poisson 
central subalgebra so 
$C \subseteq Z \subseteq A$ and that 
$\rk_C(A) = p^n$. Since 
$\rk_C(A) = \rk_C(Z) \cdot \rk_Z(A)$, 
the result follows.
\end{proof}

Finally, we show that the Poisson 
algebras $P_{\bc}$ can be characterized 
as those graded Poisson algebras which 
have log-ozone groups of maximal order.

\begin{theorem}
\label{xxthm2.12}
Suppose $\kk$ is algebraically closed and suppose $A$ is a 
graded polynomial Poisson algebra 
$\kk[x_1,\cdots,x_n]$. Then the following 
are equivalent:
\begin{enumerate}
\item[(a)]
$A$ is isomorphic to $P_{\bc}$ for some $\bc$.
\item[(b)]
$|\loz(A)|=\rk_{Z}(A)$ and $A$ is inferable.
\item[(c)]
$|\loz(A)|=\rk_{Z}(A)$ and $A$ is quasi-inferable.
\item[(d)]
$|\loz(A)|=\rk_{Z}(A)$ and $A$ is loz-decomposable.
\end{enumerate}
\end{theorem}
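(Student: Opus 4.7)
The plan is to prove (a)$\Rightarrow$(b) directly; then (b)$\Rightarrow$(c)$\Rightarrow$(d) is immediate from Lemma~\ref{xxlem2.10}. The substantial work lies in (d)$\Rightarrow$(a).

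For (a)$\Rightarrow$(b), suppose $A=P_{\bc}$. Each generator $x_i$ is Poisson normal, so $A$ is generated by Poisson normals; Lemma~\ref{xxlem2.11}(3) then gives $|\loz(A)|=\rk_{\pcnt(A)}(A)$. A direct computation shows $\delta_{x_j}(x^\alpha)=\bigl(\sum_i \alpha_i c_{ji}\bigr)x^\alpha$, so each monomial $x^\alpha$ is a joint eigenvector of all $\delta_{x_j}$; since these generate $\loz(A)$ over $\FF_p$, every $\delta\in\loz(A)$ is diagonal in the monomial basis, making $A$ inferable.

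For (d)$\Rightarrow$(a), assume loz-decomposability and $|\loz(A)|=\rk_{\pcnt(A)}(A)$. By Lemma~\ref{xxlem2.11}(2), each $A_\delta$ is a nonzero rank-one $\pcnt(A)$-submodule, and I pick a homogeneous Poisson normal $f_\delta\in A_\delta$ using Lemma~\ref{xxlem2.2}(6). Lemma~\ref{xxlem2.2}(5) yields a skew-symmetric bilinear pairing $B\colon\loz(A)\times\loz(A)\to\kk$ determined by $\delta'(f_\delta)=B(\delta',\delta)f_\delta$. First I show $B$ is non-degenerate: if $\delta'(f_\delta)=0$ for all $\delta'\in\loz(A)$, then Lemma~\ref{xxlem2.2}(5) gives $\{f_\delta,g\}=0$ for every $g\in A_{\loz}$; the rank equality lets me write $za=\sum b_{\delta''}\in A_{\loz}$ for any $a\in A$ with $z\in\pcnt(A)\setminus\{0\}$, so $z\{f_\delta,a\}=0$, and torsion-freeness of $A$ over $\pcnt(A)$ forces $\{f_\delta,a\}=0$; thus $f_\delta$ is Poisson central and $\delta=\delta_{f_\delta}=0$. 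Non-degeneracy of $B$ supplies $|\loz(A)|$ distinct characters $\chi_\delta(\delta'):=B(\delta',\delta)$ appearing in $A_{\loz}$.

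The crux is to promote the loz-decomposition of $A_{\loz}$ to a simultaneous $\loz(A)$-eigenspace decomposition $A=\bigoplus_\delta A^{\chi_\delta}$; the rank condition then forces each $A^{\chi_\delta}$ to be the rank-one piece $A_\delta$, using that $A$ is a UFD so rank-one eigenspace compatibilities in the localization of $A$ at $\pcnt(A)$ descend to divisibilities in $A$. Once $A=A_{\loz}$, taking degree-one components yields a basis $y_1,\ldots,y_n$ of $A_1$ consisting of Poisson normal elements; Lemma~\ref{xxlem2.2}(5) forces $\{y_i,y_j\}=c_{ij}y_iy_j$ with $c_{ij}\in\kk$, so $A=\kk[y_1,\ldots,y_n]\cong P_{\bc}$.

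The main obstacle is the crux step, namely $A=A_{\loz}$. The rank equality yields only $A_{\loz}\otimes_{\pcnt(A)}Q=A\otimes_{\pcnt(A)}Q$ with $Q=\pcnt(A)(\pcnt(A)\setminus\{0\})\inv$, not equality inside $A$ itself. My approach is to apply graded Nakayama to $A$ as a graded $\pcnt(A)$-module and to match the graded $\kk$-dimensions of $A/\fm A$ and $A_{\loz}/\fm A_{\loz}$ (where $\fm=\pcnt(A)_+$), using the rank-one structure of each $A_\delta$ and the UFD property of $A$ to rule out any proper extension of $A_{\loz}$ inside $A$ of the same $\pcnt(A)$-rank. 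Turning the rank-one compatibilities at the fraction-field level into concrete equalities of graded $\pcnt(A)$-submodules of $A$ is the delicate point.
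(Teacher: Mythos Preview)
Your argument for (a)$\Rightarrow$(b)$\Rightarrow$(c)$\Rightarrow$(d) is fine, and in the (d)$\Rightarrow$(a) direction the construction of the pairing $B$ and the non-degeneracy argument are correct and match the paper's setup (the paper phrases it as distinct eigenvalue tuples $(q_{ij})_j$, which is the same thing). The genuine gap is exactly the step you flag as ``the delicate point'': you never prove $A=\bigoplus_\delta A^{\chi_\delta}$ (equivalently $A=A_{\loz}$). Your Nakayama sketch does not close it. Rank equality only says $A/A_{\loz}$ is $Z$-torsion, and without knowing $A$ is free or flat over $Z$---which is not given and need not hold---there is no reason for $\dim_\kk A/\fm A$ and $\dim_\kk A_{\loz}/\fm A_{\loz}$ to match. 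A torsion graded $Z$-module need not vanish, and the UFD property of $A$ does not help here, since the obstruction is module-theoretic over $Z$, not about factorization in $A$.

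The paper bypasses this entirely with a linear-algebra observation you are missing. From loz-decomposability plus the rank hypothesis one gets $Q(A)=\bigoplus_{i=1}^m f_i\,Q(Z)$, and each $\delta_j\in\loz(A)$ acts on $f_iQ(Z)$ by the scalar $q_{ij}$. Hence $Q(A)$ is a direct sum of $\delta_j$-eigenspaces with only finitely many eigenvalues; the restriction of such an operator to any invariant subspace is again diagonalizable. Since $\delta_j\in\PDer_0(A)$, each $V_N=\bigoplus_{s\le N}A_s$ is a finite-dimensional $\delta_j$-invariant subspace of $Q(A)$, so every $\delta_j$ is diagonalizable on $V_N$. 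The $\delta_j$ commute (Lemma~\ref{xxlem2.2}(5)) and $\kk$ is algebraically closed, so they are simultaneously diagonalizable on $V_N$; letting $N\to\infty$ gives $A=\bigoplus_i A^i$ with $A^i=A\cap f_iQ(Z)$. This is the whole crux---no Nakayama, no dimension count. Your non-degeneracy statement is precisely what identifies the joint eigenspaces with the $A^i$. One further sentence you should add: a degree-one element $y\in A^i$ lies in $f_iQ(Z)$, so $\{a,y\}=\delta_i(a)\,y$ for all $a$, which is what makes $y$ genuinely Poisson normal and gives the bracket $\{y_s,y_t\}=q_{ij}y_sy_t$.
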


\begin{proof}
(a) $\Rightarrow$ (b)
Suppose $A=P_{\bc}$. Let $\delta_{f}\in \loz(A)$ for some 
Poisson normal element $f$.
By Lemma \ref{xxlem2.2}(6), $\delta_{f}=\delta_{f_i}$ for 
a $\ZZ^n$-homogeneous
element $f_i$, so we may assume that $f_i$ is a monomial. 
In this case, it is easy to check that $\delta_{f_i}$ is diagonalizable. Therefore $A$ is inferable. Since $A$ is generated by Poisson normal elements $\{x_i\}_{i=1}^{n}$, by Lemma \ref{xxlem2.11}(3), $|\loz(A)|=\rk_Z(A)$.

(b) $\Rightarrow$ (c) See Lemma \ref{xxlem2.10}(1).

(c) $\Rightarrow$ (d) See Lemma \ref{xxlem2.10}(2).

(d) $\Rightarrow$ (a)
We assume that $|\loz(A)|=\rk_{Z}(A)$. 
Recall from the definition that
\[ A_{\loz} = \bigoplus_{\delta\in\loz(A)}A_{\delta}\]
where each $A_{\delta}$ has rank 1 over $Z$ by Lemma \ref{xxlem2.11}(2). So
\[ \rk_{A_{\loz}}(A) = \rk_Z(A) - \rk_Z(A_{\loz}) = 
\rk_{Z}(A)-|\loz(A)|=0.\]
Then $Q(A_{\loz}) = Q(A)$, which implies that
\[ Q(A)= Q(A_{\loz}) = \bigoplus_{\delta \in \loz(A)} A_{\delta} Q(Z).\]
Set $Q_\delta = A_{\delta} Q(Z)$ (in particular $Q_0 = Q(Z)$), so that 
$Q(A) 
= \bigoplus_{\delta\in \loz(A)} Q_{\delta}$. 
If $\delta \in \loz(A)$, then $\delta$ extends to a Poisson derivation of $Q: =Q(A)$ uniquely. So, by abuse of notation, we may assume $\delta\in \PDer(Q)$.

Let $m = \rk_{Z}(A) = |\loz(A)|$.
Write $\loz(A) = \{ \delta_1, \delta_1, \hdots, \delta_m\}$ where $\delta_1=0$. For each $1 \leq i \leq m$, choose $0\neq f_i \in A_{\delta_i}$. Then
\[
Q(A) = \bigoplus_{i = 1}^m Q_{\delta_i} = \bigoplus_{i = 1}^m f_i Q_0.
\]
By Lemma \ref{xxlem2.2}(5), for all $1 \leq i,j \leq m$, we have
$\{ f_i,f_j\} = q_{ij} f_if_j$ for some scalars $q_{ij}\in \kk$.
In fact, if $y_i \in Q_{\delta_i}$, then we may write $y_i = f_iz$ for some $z \in Q_0$, so $\delta_j(y_i) = \delta_j(f_i z) = q_{ij}f_i z = q_{ij} y_i$.
As a consequence, each $\delta_i$ is diagonalizable  as a linear operator on $Q(A)$. 
For all $i \neq i'$, since $\delta_i \neq \delta_{i'}$, there exists $j$ such that 
\begin{equation}
\label{E2.12.1}\tag{E2.12.1}
q_{ij} \neq q_{i'j}.
\end{equation} In particular, if $i\neq 1$, then 
there exists $j$ such that $q_{ij}\neq 0$. We now prove the following two claims.

\bigskip

\noindent
{\bf Claim 1:} Suppose $x\in Q$. Then 
$x\in Q_{\delta}$ for some $\delta\in \loz(A)$ if and only if $x$ is a common eigenvector for $\delta_j$ for all $1 \leq j \leq m$.

\noindent
{\bf Proof of Claim 1:}
If $x\in Q_{\delta_i}$, then we know $\delta_j(x) =  q_{ij} x$. So $x$ is an eigenvector of $\delta_j$ for all $j$. Conversely assume that $x$ is a common eigenvector for all of the $\delta_j$. If $x$ is not in $Q_{\delta}$
for some $\delta$, then $x=\sum_i z_i f_i$ where $z_i\in Q(Z)$ and at least two $z_i$ are nonzero. Without loss of generality, we assume that $z_{m-1}$ and $z_{m}$ are nonzero. By \eqref{E2.12.1}, $q_{m-1j}\neq q_{mj}$ for some $j$. Then $\delta_j(x)=\sum_{i=1}^{m} q_{ij}z_j f_j$ which is not in $\kk x$ since $q_{m-1j}\neq q_{mj}$, which is a contradiction. This finishes the proof of Claim 1.

\medskip

For each $1\leq i\leq m$, we define 
\[ A^{i}:=\{y\in A\mid \delta_j(y)=q_{ij} y,\; \forall \; 
\delta_j\in \loz(A)\}\]
and
\[Q^{i}:=\{y\in Q\mid \delta_j(y)=q_{ij} y,\; \forall \; 
\delta_j\in \loz(A)\}.\]
It is clear that $A^{i}=A\cap Q^{i}$ and by Claim 1 we have $Q^{i}=Q_{\delta_i}$. Consequently,
$A_{\delta_i}\subseteq A^{i}$ for all $i$ and
$\sum_{i=1}^{m} A^{i}=\bigoplus_{i=1}^m A^{i}$. 

\bigskip
\noindent
{\bf Claim 2:} $A=\bigoplus_{i=1}^m A^{i}$.

\noindent
{\bf Proof of Claim 2:} For each $N \geq 0$, let $V_N$ be the $\kk$-space $\bigoplus_{s=0}^{N} A_{s}$. Recall that each $\delta\in \loz(A)$ is diagonalizable and since $\delta$ is in $\PDer_0(A)$, it is diagonalizable on $V_N$. By Lemma~\ref{xxlem2.2}(5), we see that $\delta_i \delta_j = \delta_j \delta_ i$ for all $1\leq i,j\leq m$.
Since $\kk$ is algebraically closed, the linear transformations $\delta_i: V_N \to V_N$ are simultaneously diagonalizable. Hence, $V_N$ is a direct sum of subspaces consisting of 
common eigenvectors of $\delta\in \loz(A)$. By Claim 1, $V_N = \bigoplus_{i=1}^m V_N \cap Q^{i}=\bigoplus_{i=1}^m V_N \cap A^{i}$. By letting $N\to \infty$, we have 
$A=\bigoplus_{i=1}^m A^{i}$. This finishes the proof of Claim 2.

\bigskip

Let $N=1$ in the proof of Claim 2. Then $A_1=\bigoplus_{i=1}^{m} A_1\cap A^{i}$ and we choose a basis of $A_1$,
say $\{x'_1,\cdots, x'_n\}$,
using elements in $A_1\cap A^{i}$ for all $i$. Then $\{x'_s, x'_t\}=q_{ij} x'_s x'_t$ if $x'_s\in A_1\cap A^{i}$ and $x'_t\in A_1\cap A^{j}$. Thus $A$ is isomorphic to $P_{\bc}$ for some
$\bc$. 
\end{proof}

\begin{remark}
\label{xxrem2.13}
Suppose $P$ is a polynomial Poisson algebra.
\begin{enumerate}
\item[(1)]
We can define $C_{\loz}(P)$ to be the intersection
\[C_{\loz}(P)=\bigcap_{\delta \in \loz(P)} \ker \delta.\]
It is clear that $C_{\loz}(P)$ contains $Z(P)$. Then $\rk_{C_{\loz}(P)}(P)$ is $p^m$ which is bounded above by $\rk_{Z(P)}(P)$.
\item[(2)]
If $P=P_{\bc}$ is as defined in Definition \ref{xxdef0.1}, then 
$C_{\loz}(P)=Z(P)$ and $P=P_{\loz}$.
As a consequence
\[\rk_{Z(P)}(P)=\rk_{C_{\loz}(P)} (P)=\rk_{Z(P)}(P_{\loz})=|\loz(P)|.\]
\item[(3)]
If $P$ is not $P_{\bc}$, then four 
numbers 
\[(\rk_{Z(P)}(P),\quad \rk_{C_{\loz}(P)} (P),\quad \rk_{Z(P)}(P_{\loz}),\quad |\loz(P)|)\]
may be different. It would be interesting to understand the relationships between these four numbers.
\end{enumerate}
\end{remark}

\begin{example}
\label{xxexa2.14}
Let $P=\kk[x_1,x_2]$ with Poisson bracket given by $\{x_1,x_2\}=x_1^2$. 
By Example \ref{xxexa1.6}, $Z(P)=\kk[x_1^p,x_2^p]$, and consequently, $\rk_{Z(P)}(P)=p^2$. It follows from Lemma \ref{xxlem2.8}(2) that 
$\loz(P)=\{\delta_{x_1}\}\cong {\ZZ}/(p)$ so $|\loz(P)|=p$.
Then
\[ C_{\loz}(P) = \ker\delta_{x_1} = \kk[x_1,x_2^p].\]
Hence, $\rk_{C_{\loz}(P)} (P) = p$.

It is clear that $\sum_{i=0}^{p-1} x_1^i Z(P)$ is a direct sum $\bigoplus_{i=0}^{p-1} x_1^{i} Z(P)$
which implies that $P$ is loz-decomposable, but not quasi-inferable.
\end{example}

\begin{example}
\label{xxexa2.15}
Let $P=P_\Omega$ where $\Omega=x_1x_2(x_1+x_2)$.
Then 
\[\{x_1,x_2\}=0, \quad 
\{x_2,x_3\}=2x_1 x_2+ x_2^2, \quad
\{x_3,x_1\}=x_1^2+2x_1x_2.\]
Suppose $p>3$. Note that $Z(P)$ is generated 
by $x_1^p, x_2^p, x_3^p$, and $\Omega$. 
It is easy to check that $x_1, x_2, x_1+x_2$ are 
Poisson normal elements, and 
\begin{align*}
\delta_{x_1}: &x_1\mapsto 0, x_2\mapsto 0, 
x_3\mapsto x_1+2x_2;\\
\delta_{x_2}: &x_1\mapsto 0, x_2\mapsto 0, 
x_3\mapsto -2x_1-x_2;\\
\delta_{x_1+x_2}: &x_1\mapsto 0, x_2\mapsto 0, x_3\mapsto x_1-x_2.
\end{align*}
As a consequence,
\begin{align*}
\delta_{x_1^2 x_2}: &x_1\mapsto 0, 
x_2\mapsto 0, x_3\mapsto 3x_2;\\
\delta_{x_1x_2^2}: &x_1\mapsto 0, 
x_2\mapsto 0, x_3\mapsto -3x_1.
\end{align*}
Since $-\Omega+x_1^2x_2+x_1x_2^2=0$, it 
follows that $P_{\delta_{x_1^2x_2}}
+P_{\delta_{x_1 x_2^2}}
+P_{\delta_{-\Omega}}$
is not a direct sum where 
$P_{\delta_{-\Omega}}=P_{0}$. So $P$ is 
not loz-decomposable. 
\end{example}

\section{Skew-symmetric Poisson algebras}
\label{xxsec3}

As above, we continue to assume that
$\kk$ is a field of characteristic $p > 0$.
The prime subfield of $\kk$ is denoted by $\FF_p$.
We study the skew-symmetric Poisson algebras $\PC = \kk[x_1, \dots, x_n]$ in $n$ variables, and specifically study the case when $n = 3$.
We will show that if $\PC$ is unimodular, then $\ZPC$ is Gorenstein.
The following theorem is inspired by results from \cite{CGWZ2}.

\begin{lemma}
\label{xxlem3.1}
The skew-symmetric Poisson algebra $\PC$ is unimodular if and only if $\sum_{j=1}^{n}c_{ij}=0$ for every $i$.
\end{lemma}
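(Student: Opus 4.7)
The plan is to apply the definition of the modular derivation $\phi$ directly to each generator $x_i$ and observe that a derivation of a polynomial algebra is determined by its values on the generators. Since $\phi\in\PDer(\PC)$ is in particular an ordinary derivation, $\phi=0$ if and only if $\phi(x_i)=0$ for every $i=1,\dots,n$, so it suffices to compute $\phi(x_i)$.

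First I would record the bracket in symmetric form: using skew-symmetry of the bracket and of the matrix $\bc$, one has $\{x_j,x_i\}=c_{ji}\,x_j x_i$ for all $i,j$ (with the convention $c_{ii}=0$). Then by Definition~\ref{xxdef1.4},
\[
\phi(x_i)=\sum_{j=1}^{n}\frac{\partial}{\partial x_j}\{x_j,x_i\}
=\sum_{j=1}^{n}\frac{\partial}{\partial x_j}\bigl(c_{ji}\,x_jx_i\bigr).
\]
The $j=i$ summand vanishes because $c_{ii}=0$, and for $j\ne i$ the partial derivative is simply $c_{ji}x_i$. Hence
\[
\phi(x_i)=\Bigl(\sum_{j\ne i}c_{ji}\Bigr)x_i=-\Bigl(\sum_{j=1}^{n}c_{ij}\Bigr)x_i,
\]
using $c_{ji}=-c_{ij}$ and $c_{ii}=0$ once more.

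From this computation, $\phi(x_i)=0$ if and only if $\sum_{j=1}^{n}c_{ij}=0$. Since $\phi$ is a derivation of $\PC=\kk[x_1,\dots,x_n]$, it is zero if and only if it vanishes on every generator, and the lemma follows. The argument is essentially a direct calculation; the only point that requires a little care is the bookkeeping of indices and signs to convert $\{x_j,x_i\}$ into $c_{ji}x_jx_i$ and then into the sum $\sum_j c_{ij}$ via skew-symmetry, so there is no genuine obstacle to overcome.
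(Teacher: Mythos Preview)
Your proof is correct and follows essentially the same direct computation as the paper: evaluate the modular derivation on each generator $x_i$ and observe that $\phi(x_i)$ is a scalar multiple of $x_i$, with the scalar being (up to sign) $\sum_j c_{ij}$. The only cosmetic difference is that you track the sign coming from $\{x_j,x_i\}=c_{ji}x_jx_i$ more carefully and obtain $-\sum_j c_{ij}$ rather than $+\sum_j c_{ij}$, which of course does not affect the vanishing criterion.
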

\begin{proof}
The modular derivation [Definition \ref{xxdef1.4}]
is given by
\[ 
\phi(x_i) = \sum_{j=1}^n {\textstyle\dfrac{\partial}{\partial x_j}} \{x_i,x_j\}
	= \sum_{j=1}^n c_{ij} {\textstyle\dfrac{\partial }{\partial x_j}} x_i x_j= \left(\sum_{j=1}^n c_{ij} \right)x_i.
\]
This proves the result.
\end{proof}

\begin{example}
\label{xxexa3.2}
Suppose $\PC$ is a skew-symmetric Poisson algebra in three variables. Then Lemma  \ref{xxlem3.1} implies $\PC$ is unimodular if and only if the following equalities hold:
\[ c_{12} + c_{13} = c_{21} + c_{23} = c_{31} + c_{32} = 0.\]
Antisymmetry now implies that $\PC$ is unimodular if and only if
there is some $c \in \kk$ such that
\[ \bc = \begin{bmatrix}0 & c & -c \\ -c & 0 & c \\ c & -c & 0\end{bmatrix}.\]
\end{example}

We use the results from \S6 of \cite{Stanley} where simple combinatorial conditions are given for homological properties of certain subalgebras of (commutative) polynomial algebras generated by monomials. For $\bv = (v_1, \dots, v_n) \in \NN^n$, let $x^{\bv} = x_1^{v_1} \cdots x_n^{v_n}$. For each $1 \leq i \leq n$, let $\be_i$ denote the $i$th standard basis vector of $\NN^n$ (or $\FF_p^n$ or $\kk^n$), let $\balpha_i = p\be_i$, and let $\mathds{1} = \sum_{i=1}^n \be_i$..
Given a submonoid $M'$ of $\NN^n$, let $\kk[M']$ denote the subalgebra of $\kk[x_1, \ldots, x_n]$ generated by $\{ x^\bb \mid \bb \in M'\}$. We call $M'$ a \emph{simplicial submonoid} if there exist $\bb_1, \ldots, \bb_n \in M'$
such that $x^{\bb_1},\ldots,x^{\bb_n}$ is a regular sequence in $\kk[M']$.

Consider the map
\[
\varphi_{\bc}: \NN^n \to \FF_p^n \hookrightarrow \kk^n \overset{\bc \cdot}{\to} \kk^n
\]
where the first map is given by reduction modulo $p$, and the last map is given by left multiplication by the matrix $\bc$.

\begin{lemma}
\label{xxlem3.3}
The set $M = \{ \bv \in \NN^n \mid \varphi_{\bc}(\bv) = 0\}$ is a submonoid of $\NN^n$. Further, $\ZPC = \kk[M]$.
\end{lemma}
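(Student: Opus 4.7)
The plan is to exploit the fact that $\PC$ is $\NN^n$-graded as a Poisson algebra (with $\deg x_i = \be_i$). The bracket relation $\{x_i,x_j\} = c_{ij}x_ix_j$ ensures that the bracket has degree $\bzero$ with respect to this grading, and in particular that each $1$-dimensional graded piece $\kk x^{\bv}$ is stable under $\{-,y\}$ for every monomial $y$. Since homogeneity forces Poisson centrality to be checked one multidegree at a time, we will conclude that $\ZPC$ is $\NN^n$-graded and is therefore spanned by Poisson central monomials.

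To identify which monomials are Poisson central, I would apply the Leibniz rule: for $\bv \in \NN^n$ and $1 \le i \le n$,
\[
\{x^{\bv}, x_i\} = \sum_{j=1}^n v_j\, x^{\bv - \be_j}\{x_j, x_i\} = \left(\sum_{j=1}^n c_{ji} v_j\right) x^{\bv} x_i.
\]
Viewing the $v_j \in \NN$ as elements of $\kk$ via $\NN \to \FF_p \hookrightarrow \kk$, the vanishing condition is exactly $\bc^T \bar{\bv} = 0$ in $\kk^n$. Since $\bc$ is skew-symmetric, this is equivalent to $\bc\bar{\bv} = 0$, i.e., $\varphi_{\bc}(\bv) = 0$; that is, $\bv \in M$. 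Since $\{-,x_i\}$ for all $i$ generates the action of $\{-,y\}$ for all $y \in \PC$ (by the derivation property), this one check suffices. Combining with the previous paragraph gives $\ZPC = \bigoplus_{\bv \in M} \kk x^{\bv} = \kk[M]$.

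The submonoid claim is then immediate: the composition $\NN^n \to \FF_p^n \hookrightarrow \kk^n \xrightarrow{\bc \cdot} \kk^n$ is additive on $\NN^n$ (reduction mod $p$ is additive and $\bc \cdot$ is $\kk$-linear), so $\varphi_{\bc}(\bv + \bw) = \varphi_{\bc}(\bv) + \varphi_{\bc}(\bw)$ and $\varphi_{\bc}(\bzero) = 0$; hence $M$ is closed under addition and contains $\bzero$.

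There is no real obstacle here — the main conceptual point is just that the $\NN^n$-grading reduces the computation of $\ZPC$ to a monomial check, which in turn reduces to the linear-algebraic condition $\bc\bar{\bv} = 0$ over $\FF_p$. One small thing to be careful about is the role of characteristic $p$: the scalars $v_j$ in the Leibniz computation naturally live in $\FF_p$, which is why the ambient target of $\varphi_{\bc}$ is set up via reduction mod $p$ before multiplying by $\bc$.
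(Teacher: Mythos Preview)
Your proof is correct and follows essentially the same approach as the paper's. The paper computes $\{x_i, x^{\bv}\}$ directly (yielding the condition $\bc\bv = 0$), while you compute $\{x^{\bv}, x_i\}$ (yielding $\bc^T\bar{\bv} = 0$) and then invoke skew-symmetry; you are also more explicit than the paper about why the $\NN^n$-grading reduces the question to monomials, a point the paper leaves implicit.
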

\begin{proof}
If $\bv, \bw \in M$ then $\bc (\bv + \bw) = \bc \bv + \bc \bw = 0$ and so $M$ is a submonoid of $\NN^n$. For the second claim, note that if $\bv = (v_1, \dots, v_n) \in \NN^n$, then $x^\bv$ commutes with $x_i$ if and only if
\begin{align*}
0 &= \{x_i, x^{\bv}\} = \{ x_i, x_1^{v_1}x_2^{v_2} \cdots x_n^{v_n}\} = \sum_{j = 1}^n \{x_i, x_j^{v_j}\} x_1^{v_1} \cdots \widehat{x_j^{v_j}}\cdots x_n^{v_n} \\
&= \sum_{j = 1}^n v_j \{x_i, x_j\} x_1^{v_1} \cdots x_j^{v_j-1}\cdots x_n^{v_n} = \sum_{j = 1}^n c_{ij}v_j  x_1^{v_1} \cdots x_i^{v_i+1}\cdots x_n^{v_n}.
\end{align*}
Hence, $x^{\bv} \in \ZPC$ if and only if $\{x_i, x^{\bv}\} = 0$ for all $1 \leq i \leq n$ if and only if $\bc \bv = 0$ if and only if $x^{\bv} \in \kk[M]$.
\end{proof}

\begin{lemma}
\label{xxlem3.4}
Let $M = \{ \bv \in \NN^n \mid \varphi_{\bc}(\bv) = 0\}$, $\balpha_i = p\be_i$, and $\mathds{1} = \sum_{i=1}^n \be_i$ be as defined above. Define $\Lambda = \bigoplus_{i=1}^n\balpha_i \NN$ and $B=\{\bb = (b_1, \dots, b_n) \in M \mid 0 \le b_i < p\}$.
\begin{enumerate}
\item \label{technicallemma1regularsequence}
The elements $\balpha_1,\dots,\balpha_n \in M$, and $x^{\balpha_1},\ldots, x^{\balpha_n}$ form a regular sequence in $\kk[M]$ {\rm{(}}in any order{\rm{)}}. 
\item\label{technicallemma1regularsequenceCM} $M$ can be expressed as the disjoint union $\bigcup_{\bb\in B} (\bb+\Lambda)$. In particular, since $B$ is a finite set, this is a finite disjoint union.
\item\label{technicallemma1gorenstein} If $\PC$ is unimodular, 
then the element $(p-1)\mathds{1} \in  B$,
and for all $\bb\in B$, there is a unique $\bb'\in B$ such that $\bb + \bb'= (p-1)\mathds{1}$. 
\end{enumerate}
\end{lemma}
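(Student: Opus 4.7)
The three parts are essentially structural facts about the monoid $M$, and the plan is to treat them in order, using each part to facilitate the next.

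For part (1), I would first observe that each $\balpha_i = p\be_i$ reduces to $0$ modulo $p$, so $\varphi_{\bc}(\balpha_i) = 0$ and thus $\balpha_i \in M$. Since $x^{\balpha_i} = x_i^p$, the question of whether $x_1^p,\ldots,x_n^p$ is a regular sequence in $\kk[M]$ (in any order) reduces to showing that $\kk[M]$ is free, or at least faithfully flat, over $\kk[\balpha_1,\ldots,\balpha_n] = \kk[x_1^p,\ldots,x_n^p]$, since these elements are a regular sequence (in any order) in the polynomial subring. The freeness will be immediate once part (2) is established, because part (2) exhibits $\{x^{\bb}\}_{\bb \in B}$ as a basis for $\kk[M]$ over $\kk[x_1^p,\ldots,x_n^p] = \kk[\Lambda]$. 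Thus I would prove part (2) first and then deduce (1).

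For part (2), the key is ordinary Euclidean division on each coordinate. Given $\bv \in M$, write $v_i = b_i + p a_i$ with $0 \le b_i < p$ and $a_i \in \NN$; this gives $\bv = \bb + \balpha$ with $\bb = (b_i)$ having all coordinates in $[0,p)$ and $\balpha = \sum_i p a_i \be_i \in \Lambda$. Since $\varphi_{\bc}$ kills every element of $\Lambda$ (all coordinates are divisible by $p$), $\varphi_{\bc}(\bb) = \varphi_{\bc}(\bv) = 0$, so $\bb \in B$. For disjointness, if $\bb_1 + \balpha_1 = \bb_2 + \balpha_2$ with $\bb_j \in B$ and $\balpha_j \in \Lambda$, then coordinatewise $b_{1,i} - b_{2,i}$ lies in $(-p,p)$ and is divisible by $p$, forcing $\bb_1 = \bb_2$. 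Finiteness of $B$ is obvious since $B \subseteq \{0,1,\ldots,p-1\}^n$.

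For part (3), I would first check that $(p-1)\mathds{1} \in B$. Clearly each coordinate is $p-1 \in [0,p)$, so the only issue is $\varphi_{\bc}((p-1)\mathds{1}) = 0$. But $(p-1)\mathds{1} \equiv -\mathds{1} \pmod p$, and $\bc \cdot \mathds{1}$ is the vector with $i$th entry $\sum_{j=1}^n c_{ij}$, which vanishes by the unimodularity criterion of Lemma \ref{xxlem3.1}. Hence $\varphi_{\bc}((p-1)\mathds{1}) = 0$, as required. For the pairing, given $\bb \in B$ set $\bb' := (p-1)\mathds{1} - \bb$. Each coordinate of $\bb'$ is $(p-1) - b_i \in [0, p-1]$, so $\bb' \in \NN^n$ with the right size constraint; and $\varphi_{\bc}(\bb') = \varphi_{\bc}((p-1)\mathds{1}) - \varphi_{\bc}(\bb) = 0$, so $\bb' \in B$. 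Uniqueness is forced by the defining equation $\bb + \bb' = (p-1)\mathds{1}$.

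\textbf{Where I expect the friction.} Parts (2) and (3) are essentially bookkeeping once the right reformulation in terms of $\varphi_{\bc}$ is in hand. The most delicate point is the regular-sequence claim in part (1): one must be careful not to claim a regular-sequence property inside $\kk[M]$ merely because it holds in the ambient polynomial ring. My plan is to route through the freeness of $\kk[M]$ over $\kk[x_1^p,\ldots,x_n^p]$ established in part (2), since tensoring a regular sequence with a free module preserves regularity in any order. This is why I would prove (2) before (1).
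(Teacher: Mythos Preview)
Your proposal is correct, and parts (2) and (3) match the paper's proof essentially word-for-word (Euclidean division on coordinates for (2); using Lemma~\ref{xxlem3.1} plus the explicit complement $\bb' = (p-1)\mathds{1} - \bb$ for (3)).

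The one genuine difference is in part (1). The paper does not reorder the parts; it proves (1) directly by a $\ZZ^n$-grading argument: if $x_{i+1}^p r \in (x_1^p,\ldots,x_i^p)$ in $\kk[M]$, reduce to the case $r$ is a monomial (the ideal is $\ZZ^n$-homogeneous), and then inspect the multidegree to see that $r$ itself must lie in the ideal. Your route---prove (2) first, read off that $\kk[M]$ is free over $\kk[x_1^p,\ldots,x_n^p]$ with basis $\{x^{\bb} : \bb \in B\}$, and then transport the regular sequence along the free extension---is equally valid and arguably more conceptual: it explains \emph{why} the sequence is regular (freeness over the polynomial subring) rather than checking it by hand. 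The paper's argument, on the other hand, is self-contained for (1) and does not depend on (2), which keeps the logical flow in the stated order. Either approach is fine here.
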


\begin{proof}
(1) Since each $x^{\balpha_i} = x_i^p \in \ZPC$, it is clear that $\balpha_1, \dots, \balpha_n \in M$. Let $Z=\kk[M]$ and suppose there exists some $r\in Z$ such that $x_{i+1}^p r \in (x_1^p,\ldots,x_i^p) =:I$. Since $Z$ is $\ZZ^n$-graded and $I$ is a $\ZZ^n$-graded ideal, we may assume $r$ is a monomial. Then $r \in I$ by considering the $\ZZ^n$-degree. Hence $x_{i+1}^p$ is a nonzero divisor in $Z/I$, so the sequence $(x_1^p,\ldots, x_n^p)$ is regular.
    
(2) For any $\bv \in \NN^n$, it is clear that $x^\bv \in \ZPC$ if and only if $x^{\bv + \balpha_i} \in \ZPC$ for any $1 \leq i \leq n$. Hence, if $\bv \in M$, then there is some $\bb \in B$ such that $\bv \in \bb + \Lambda$. For fixed $\bb \in B$, the elements $\bb + \Lambda$ are simply those elements of $\NN^n$ which are equal to $\bb$ modulo $p$. Since none of the distinct $\bb, \bb' \in B$ are equal modulo $p$, it is clear that $\bb + \Lambda$ and $\bb' + \Lambda$ are disjoint. Hence, $M$ can be expressed as the disjoint union, as desired.
    
(3) Let $\bw = (p-1)\mathds{1}$. Assume 
$\PC$ is unimodular. By Lemma \ref{xxlem3.1}, the columns of $\bc$ sum to $0$ modulo $p$. Hence, $\bw \in B$. Given $\bb = (b_1, \dots, b_n) \in B$, we need to show that $\bw - \bb\in B$. But since $\bc \bw = \bc \bb = 0$, then $\bc(\bw - \bb) = 0$. Since $0\le b_i < p$, then $0\le (p-1)-b_i < p$, so $\bw-\bb\in B$. 
\end{proof}

\begin{theorem}
\label{xxthm3.5}
If $\PC$ is unimodular, then $\ZPC$ is Gorenstein. 
\end{theorem}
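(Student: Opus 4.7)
The strategy is to pass from $\ZPC = \kk[M]$ to a finite-dimensional Artinian quotient obtained by killing the regular sequence from Lemma \ref{xxlem3.4}(1), and then to recognize the symmetry of Lemma \ref{xxlem3.4}(3) as encoding the classical Gorenstein criterion that the socle of an Artin local algebra be one-dimensional.

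First, I would use Lemma \ref{xxlem3.4}(2) to identify $\kk[M]$ as a free module over the polynomial subring $\kk[\Lambda] = \kk[x_1^p,\dots,x_n^p]$ with basis $\{x^{\bb} : \bb \in B\}$; this already gives Cohen--Macaulayness. Since $x_1^p,\dots,x_n^p$ is a regular sequence in $\kk[M]$ by Lemma \ref{xxlem3.4}(1), the Gorenstein property of $\kk[M]$ is equivalent to that of the finite-dimensional graded local quotient
\[
R \;=\; \kk[M]/(x_1^p,\dots,x_n^p) \;=\; \bigoplus_{\bb \in B} \kk\cdot x^{\bb},
\]
whose product is $x^{\bb}\cdot x^{\bb'} = x^{\bb+\bb'}$ if $\bb + \bb' \in B$, and $0$ otherwise (since $\bb + \bb'$ has some coordinate $\geq p$ exactly when $x^{\bb + \bb'} \in (x_1^p,\dots,x_n^p)$).

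Next, I would invoke the unimodularity hypothesis. Set $\bw = (p-1)\mathds{1}$. By Lemma \ref{xxlem3.4}(3), $\bw \in B$ and the map $\bb \mapsto \bw - \bb$ is an involution on $B$. Since every $\bb \in B$ has all coordinates $\leq p-1$, the element $x^\bw$ spans the top-degree piece $R_{n(p-1)}$. For each $\bb \in B$ the involution produces the explicit partner $\bb' := \bw - \bb \in B$ with $x^{\bb}\cdot x^{\bb'} = x^{\bw}$. Consequently every nonzero element of $R$ has a multiple equal to $x^{\bw}$, so the socle of $R$ is exactly $\kk\cdot x^\bw$ and is one-dimensional. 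Hence $R$ is a graded Artinian Gorenstein $\kk$-algebra, and by the standard regular sequence reduction $\ZPC = \kk[M]$ is Gorenstein.

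The only real obstacle is verifying that the socle is genuinely one-dimensional, i.e., ruling out elements $x^\bb$ with $\bb \ne \bw$ that are annihilated by every $x^{\bv}$, $\bv \in M\setminus\{0\}$. This is precisely where Lemma \ref{xxlem3.4}(3) is used: the partner $\bw - \bb \in B$ gives a nonzero product $x^\bb\cdot x^{\bw-\bb} = x^\bw$ in $R$, so no such $\bb$ exists. Everything else is bookkeeping with the $\NN^n$-grading and the standard passage between Gorenstein properties across a regular sequence.
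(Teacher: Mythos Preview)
Your proof is correct and rests on the same three ingredients from Lemma~\ref{xxlem3.4} as the paper's. The difference is only in packaging: the paper invokes Stanley's combinatorial criteria \cite[Corollaries~6.4 and~6.5]{Stanley} as black boxes---part~(2) of the lemma matches the hypothesis of \cite[Corollary~6.4]{Stanley} for Cohen--Macaulayness, and the pairing $\bb\leftrightarrow\bw-\bb$ from part~(3) is exactly the relabelling $\beta_i+\beta_{t+1-i}=\beta_t$ required by \cite[Corollary~6.5]{Stanley} for Gorensteinness---whereas you unpack these criteria by hand, passing to the Artinian quotient $R=\kk[M]/(x_1^p,\dots,x_n^p)$ and checking its socle directly. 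Your route is more self-contained; the paper's is shorter.

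One small tightening: the sentence ``every nonzero element of $R$ has a multiple equal to $x^{\bw}$'' is not quite what you establish for arbitrary linear combinations. What your computation actually gives is that for $r=\sum_{\bb}c_{\bb}x^{\bb}$ and any $\bb_0$ with $c_{\bb_0}\neq 0$, the $x^{\bw}$-coefficient of $r\cdot x^{\bw-\bb_0}$ is exactly $c_{\bb_0}$ (since $x^{\bb}\cdot x^{\bw-\bb_0}=x^{\bw}$ forces $\bb=\bb_0$). That already suffices: if $r$ lies in the socle and $\bb_0\neq\bw$, then $x^{\bw-\bb_0}$ is in the maximal ideal, so $r\cdot x^{\bw-\bb_0}=0$ forces $c_{\bb_0}=0$. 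Equivalently, the pairing $(a,b)\mapsto(\text{coefficient of }x^{\bw}\text{ in }ab)$ has Gram matrix a permutation matrix in the basis $\{x^{\bb}\}_{\bb\in B}$, so $R$ is Frobenius. Either phrasing closes the argument.
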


\begin{proof}
Retain the notation of Lemma~\ref{xxlem3.4}. 
By Lemma \ref{xxlem3.4}\eqref{technicallemma1regularsequenceCM} and \cite[Corollary 6.4]{Stanley}, $\kk[M]$ is Cohen--Macaulay.
In particular, the $\beta_1,\ldots, \beta_t$ appearing in \cite[Corollary 6.4]{Stanley} are exactly the elements of $B$ (this fact is not strictly needed for the rest of the proof).

Let $\beta_t = \bw = (p-1)\mathds{1} \in B$ and $\beta_1=\bzero$. We claim that it is possible to (re)label the other $t-2$ elements of $B$ such that $\beta_i+\beta_{t+1-i}=\beta_t = \bw$ for all $1\le i \le t$. 
Choose an element of $B$ not yet labelled and call it $\beta_i$. Then 
by Lemma \ref{xxlem3.4}\eqref{technicallemma1gorenstein}, we have $\beta_t-\beta_i\in B$. So let $\beta_{t+1-i}=\beta_t-\beta_i$. Proceeding in this fashion for $i=2,\ldots,\lfloor t+1-i/2\rfloor$ we will achieve what we claimed. 
By \cite[Corollary 6.5]{Stanley}, $\kk[M]$ is Gorenstein. 
\end{proof}

\begin{remark}
\label{xxrem3.6}
We can restate \cite[Corollary 6.5]{Stanley} in terms of $B$. Define a partial order on $\NN^n$ by $\bb \le \bb'$ whenever $\bb' - \bb\in \NN^n$. Then $\kk[M]$ is Gorenstein if and only if $B$ has a maximal element. In the unimodular case, this maximal element was $\bw$.
\end{remark}

The unimodular condition on a skew-symmetric Poisson algebra is not necessary for its center to be Gorenstein. 

\begin{example}
\label{xxexa3.7}
Let $0<a<p$ and set
\begin{align}\label{regularc}
    \bc = \begin{bmatrix} 0 & a & 0 \\ -a & 0 & 0 \\ 0 & 0 & 0 \end{bmatrix}.
\end{align}
Then $\ZPC$ is generated by $x_1^p, x_2^p ,x_3$,
so is regular and hence Gorenstein, but $\PC$ is not unimodular.
\end{example}

For the next results, we establish some notation. Let $M$ be as defined in Lemma~\ref{xxlem3.3} and let $\pi_j : \NN^n\to \NN$ denote the $j$th projection. Define
\[
J = \left \{ j \mid \pi_j(M) \subseteq p \ZZ \right \} \quad \text{and} \quad 
I = \{1, \dots, n\} \setminus J.
\]
Further, let $\bu = \sum_{i \in I} \be_i$.
\begin{theorem}
\label{xxthm3.8}
Retain the above notation and let $\PC$ be a skew-symmetric Poisson algebra. If there exists some $\beta \in M$ such that for any $i \in I$, $\pi_i(\beta)$ is not divisible by $p$, then $Z\PC$ is Gorenstein if and only if $\bu \in M$. 

In particular, if $I=\{1,\ldots,n\}$, and there exists $\beta \in M$ with no component divisible by $p$, then $Z\PC$ is Gorenstein if and only if $\PC$ is unimodular.
\end{theorem}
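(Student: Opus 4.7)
My plan is to reuse the same semigroup-ring machinery that proved Theorem~\ref{xxthm3.5}. By Lemma~\ref{xxlem3.3} we still have $\ZPC = \kk[M]$, and the proofs of Lemma~\ref{xxlem3.4}(1)--(2) made no use of the unimodular hypothesis, so $\kk[M]$ is again Cohen--Macaulay by \cite[Corollary 6.4]{Stanley}. The reformulation in Remark~\ref{xxrem3.6} of \cite[Corollary 6.5]{Stanley} then reduces the Gorenstein question to the combinatorial statement that the finite poset $B$ has a greatest element under the componentwise order on $\NN^n$.

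The candidate greatest element is $(p-1)\bu$. First I would verify that it is always an upper bound for $B$: for $\bb\in B$ and $j\in J$, the inclusion $\pi_j(M)\subseteq p\ZZ$ forces $b_j\in p\ZZ\cap[0,p)=\{0\}$, while for $i\in I$ we have $b_i\le p-1$. Next I would produce, for each $i\in I$, an element $\bb^{(i)}\in B$ whose $i$-th coordinate equals $p-1$: start from any element of $M$ with $i$-th coordinate nonzero modulo $p$ (the hypothesis provides a single $\beta\in M$ doing this simultaneously for every $i\in I$), multiply by a positive integer $k$ with $k\pi_i(\beta)\equiv p-1\pmod p$, and reduce coordinatewise modulo $p$; the result is in $B$ by the same kernel calculation used in Lemma~\ref{xxlem3.3}. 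Together these show that $(p-1)\bu$ is the componentwise supremum of $B$, so $B$ has a greatest element if and only if $(p-1)\bu$ lies in $B$, which (since $(p-1)\bu$ is automatically in $[0,p)^n$) is the same as $(p-1)\bu\in M$, and, because $p-1$ is a unit modulo $p$, this is equivalent to $\bu\in M$.

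To close the argument I would verify the pairing that Stanley's criterion really demands: when $\bu\in M$, the map $\bb\mapsto(p-1)\bu-\bb$ is a well-defined involution on $B$, since the difference has entries in $[0,p-1]$, vanishes on $J$-coordinates, and satisfies $\bc((p-1)\bu-\bb)\equiv\bc((p-1)\bu)-\bc\bb\equiv 0\pmod p$. The ``in particular'' statement then falls out directly: $I=\{1,\ldots,n\}$ gives $\bu=\mathds{1}$, and $\mathds{1}\in M$ unpacks via the definition of $\varphi_\bc$ to $\bc\mathds{1}=0$ in $\kk^n$, i.e. every row sum of $\bc$ vanishes in $\kk$, which by Lemma~\ref{xxlem3.1} is precisely the unimodular condition. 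The step I find most delicate is the precise role of the single-$\beta$ hypothesis: the per-coordinate scaling argument above only needs the weaker coordinatewise version extracted from the definition of $I$, so I would want to examine Stanley's Gorenstein criterion carefully to see whether the single-$\beta$ form is genuinely required to make the ``greatest element'' reformulation of Remark~\ref{xxrem3.6} faithfully encode Gorensteinness (for instance, by guaranteeing that the interior ideal of $\kk[M]$ is non-trivial), or whether the conclusion in fact holds under the weaker per-coordinate assumption.
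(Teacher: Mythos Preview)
Your proposal is correct and follows essentially the same route as the paper: reduce via Remark~\ref{xxrem3.6} to whether $B$ has a greatest element, show that $(p-1)\bu$ is always the componentwise supremum of $B$ (using $\pi_j(B)=\{0\}$ for $j\in J$ and the scaled reductions of $\beta$ to force $p-1$ in each $I$-coordinate), and conclude that a greatest element exists precisely when $(p-1)\bu\in B$, i.e.\ when $\bu\in M$. Your separate pairing verification is superfluous once Remark~\ref{xxrem3.6} is invoked, and your suspicion about the hypothesis is well founded: the paper's argument, like yours, only consumes the per-coordinate fact that for each $i\in I$ some element of $M$ has $i$th entry nonzero modulo $p$, which already follows from the definition of $I$, so the single-$\beta$ assumption is not actually used in the proof.
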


\begin{proof}
Recall the notation of Lemma \ref{xxlem3.4} where $M$, $\Lambda$ and $B$ are related as follows
\begin{align*}
    M = \bigcup_{\bb\in B} (\Lambda + \bb).
\end{align*}
By \cite[Corollary 6.5]{Stanley},
the ring $\kk[M]$ is Gorenstein if and only if $B$ has a maximal element. So it suffices to show that $\bu \in M$  
if and only if $B$ has a maximal element.
For each $1\leq s\leq p-1$, let $\beta_s\in \NN^n$ be defined by reducing each component of $s \beta$ modulo $p$. Since $\bc \beta = 0$ we see that $\bc \beta_s = 0$. Further, each component of $\beta_s$ is between $0$ and $p-1$ and hence $\beta_s \in B$.

First, suppose that $\bu\in M$. Then $(p-1)\bu\in M$ is the maximal element in $B$ since it is an element of $B$, and for every other element $\bb\in B$, we have $\pi_j(\bb)=0$ for $j\in I$ and $\pi_i(\bb)\le p-1$ for $i\in I$, so that $(p-1)\bu-\bb\ge 0$. Conversely, suppose $B$ has a maximal element $\boldm$. Since $\FF_p^{\times}$ is a group, for each component index $i\in I$, there exists an $s$ such that the $i$th component of $\beta_s$ is equal to $p-1$. This implies the $i$-th component of $\boldm$ must be $p-1$. Since this is true for all $s$, we conclude that $\boldm=(p-1)\bu$, and consequently $\bu\in M$. 

Finally if $I=\{1,\ldots,n\}$, then $\bu=(1,\ldots,1)$, and $\bu\in M$ if and only if $\PC$ is unimodular by Lemma  \ref{xxlem3.1}.
\end{proof}

The above theorem gives easily checkable conditions, as we demonstrate in the next example.

\begin{example}
Let $n = 3$ and $p = 3$. If we let
\[
\bc = \begin{bmatrix}
   0 & 1 & 1 \\ -1 & 0 & -1 \\ -1 & 1 & 0
\end{bmatrix}
\]
then since $(1,1,2) \in B$, we have that $I = \{1,2,3\}$ and there exists $\beta = (1,1,2)$ which satisfies the hypotheses of Theorem~\ref{xxthm3.8}. However, since
\[
B = \{ (0,0,0), (1,1,2), (2,2,1) \}
\]
we see that $\bu \not \in M$ and so the $\ZPC$ is not Gorenstein. Indeed, the center is generated by $x_1 x_2 x_3^2$, $x_1^2 x_2^2 x_3$, and $x_1^3, x_2^3, x_3^3$.

On the other hand, if we let
\[
\bc = \begin{bmatrix}
    0 & 1 & -1 \\ -1 & 0 & 1 \\ 1 & -1 & 0
\end{bmatrix}
\]
then $B = \{(0,0,0), (1,1,1)\}$ and since $\bu \in M$, we see that $\ZPC$ is Gorenstein. Indeed, the center is generated by $x_1 x_2 x_3$ and the $x_1^3, x_2^3, x_3^3$ so is isomorphic to $\kk[a,b,c,d]/(a^3 - bcd)$.
\end{example}

\begin{example}
The vector $\beta$ in Theorem~\ref{xxthm3.8} may or may not exist. For example, if we let $p = 3$ and let
\[
\bc = \begin{bmatrix}
    0 & 1 & -1 & -1 \\ -1 & 0 & 1 & -1 \\ 1 & -1 & 0 & -1 \\ 1 & 1 & 1 & 0
\end{bmatrix},
\]
then over $\FF_3$, the nullspace of $\bc$ is spanned by $(1,1,1,0)$ and $(1,2,0,2)$. Therefore,
\begin{align*}
B= \{&(0,0,0,0), (0,1,2,2), (0,2,1,1), (1,0,2,1), \\ 
&(1,1,1,0), (1,2,0,2), (2,0,1,2), (2,1,0,1) (2,2,2,0) \}.
\end{align*}
Hence, $I = \{1,2,3,4\}$, but there is no single vector $\beta \in M$ such that all components of $\beta$ are nonzero modulo $3$.
\end{example}

Although the vector $\beta$ in Theorem~\ref{xxthm3.8} may not exist, if $p > n$, then it is guaranteed to exist, as the next result demonstrates.

\begin{proposition}
\label{xxpro3.9}
Assume $n < p =  \ch \kk$. Then either
\begin{enumerate}
\item[(1)]
$\ZPC=\kk[x_1^{p},\cdots, x_{n}^p]$, or
\item[(2)]
$I \neq \emptyset$ and there exists some $\beta \in M$ such that for all $i \in I$, $\pi_i(\beta)$ is not divisible by $p$.
\end{enumerate}
As a consequence, $\ZPC$ is Gorenstein if and only if $\ZPC=\kk[x_1^p,\cdots,x_n^p]$
or condition (2) holds and $\bu\in M$.
\end{proposition}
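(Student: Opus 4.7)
The plan is to prove the dichotomy first and then deduce the Gorenstein characterization from Theorem~\ref{xxthm3.8}.

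First I would observe that condition (1) is equivalent to $I = \emptyset$. Indeed, $\ZPC = \kk[x_1^p,\dots,x_n^p]$ if and only if every element of $M$ has all components divisible by $p$, which is precisely the statement that $J = \{1,\dots,n\}$. So it suffices to show that whenever $I \neq \emptyset$, condition (2) is satisfied.

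Now assume $I \neq \emptyset$. For each $i \in I$, by the definition of $I$ we may pick some $\beta^{(i)} \in M$ with $\pi_i(\beta^{(i)}) \not\equiv 0 \pmod{p}$. Since $M$ is a submonoid of $\NN^n$, every $\NN$-linear combination $\beta(\bm{a}) := \sum_{i \in I} a_i\, \beta^{(i)}$ with $\bm{a} = (a_i)_{i \in I} \in \NN^I$ lies in $M$. For each $j \in I$, reduction modulo $p$ shows that
\[
\pi_j(\beta(\bm{a})) \equiv \sum_{i \in I} a_i \,\pi_j(\beta^{(i)}) \pmod{p},
\]
so the set $H_j \subseteq \FF_p^{I}$ of $\bm{a}$ for which this sum vanishes is a linear subspace. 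Since the $j$-th coordinate of the vector $(\pi_j(\beta^{(i)}))_{i \in I}$ is nonzero modulo $p$ by our choice of $\beta^{(j)}$, the subspace $H_j$ is a proper hyperplane, so $|H_j| = p^{|I|-1}$.

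The key counting step, which I expect to be the crux of the argument, is as follows. The total number of ``bad'' residues modulo $p$ is at most
\[
\bigl|\textstyle\bigcup_{j \in I} H_j\bigr| \le |I| \cdot p^{|I|-1} \le n \cdot p^{|I|-1} < p \cdot p^{|I|-1} = p^{|I|},
\]
where the strict inequality uses the hypothesis $n < p$. Hence there exists $\bm{a} \in \FF_p^{I}$ lying outside every $H_j$; lifting the $a_i$ to representatives in $\{0,1,\dots,p-1\}$ produces an element $\beta = \beta(\bm{a}) \in M$ with $\pi_j(\beta) \not\equiv 0 \pmod{p}$ for every $j \in I$. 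This establishes (2).

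For the final consequence, I would combine the dichotomy with Theorem~\ref{xxthm3.8}. If (1) holds, then $\ZPC = \kk[x_1^p,\dots,x_n^p]$ is a polynomial ring, hence regular and therefore Gorenstein. If (2) holds, then the hypothesis of Theorem~\ref{xxthm3.8} is satisfied, so $\ZPC$ is Gorenstein if and only if $\bu \in M$. Combining these two cases yields the stated equivalence.
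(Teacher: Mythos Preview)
Your argument is correct. The paper proves the same dichotomy by an inductive construction: having found $\beta_{m-1}$ with nonzero first $m-1$ coordinates (modulo $p$), it considers $\beta_{m-1}+c\,\bb_m$ and observes that at most $m\le n<p$ values of $c\in\FF_p$ must be avoided to keep all of the first $m$ coordinates nonzero. Your proof replaces this induction by a single global union bound over the hyperplanes $H_j\subseteq\FF_p^{I}$, using $|I|\le n<p$ to conclude $\bigcup_j H_j\subsetneq\FF_p^{I}$. The two arguments are equivalent in spirit (both are pigeonhole/avoidance arguments exploiting $n<p$); your version is arguably cleaner since it avoids the bookkeeping of the induction, while the paper's version makes more explicit how each new generator is absorbed. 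The deduction of the Gorenstein statement from Theorem~\ref{xxthm3.8} is identical in both.
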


\begin{proof} Suppose we are not in case (1). Then it is clear that $I$ is nonempty. Up to a permutation, 
we may assume that $I=\{1,\cdots, w\}$ where $w\leq n$. So for each $i\in I$, there exists $\bb_i \in M$ such that $\pi_i(\bb_i)$ is not divisible by $p$.

We claim that there is a $\beta \in M$ such that for all $i \in I$, $\pi_i(\beta)$ is not divisible by $p$.
Consider $M':=M/(p \ZZ)^n$ as a subspace of 
$(\ZZ/(p))^n$ and abuse the notation $\pi_i$ for the projection from 
$(\ZZ/(p))^n\to \ZZ/(p)$. We also abuse notation and view $\bb_i \in M'$. We need to find an element
in $M'$ such that its $i$th component, for 
every $i\in I$, is nonzero.
In fact, we claim that for every $1 \leq m \leq w$, there exists some $\beta_m \in M'$ such that $\pi_i(\beta_m) \neq 0$ for every $1 \leq i \leq m$. We proceed by induction on $m$. When $m = 1$, we can let $\beta_1 = \bb_1$. Now suppose that $m > 1$, and we have proved the claim for $m - 1$ and so produced $\beta_{m-1}$. Consider $\beta_{m-1} + c \bb_m$ where $c \in \ZZ/(p)$. For each $i < m$, there is at most one possible choice for $c$ such that $\pi_i(\beta_{m-1} + c \bb_m) = 0$ (if $\pi_i(\bb_m) = 0$, then there are no choices, otherwise we must choose $c = \pi_i(\bb_m)\inv\pi_i(\beta_{m-1})$, if this element is in $\ZZ/(p)$). There is also at most one possible choice for $c$ such that $\pi_m(\beta_{m-1} + c \bb_m) = 0$ (namely, $c = \pi_m(\bb_m)\inv\pi_m(\beta_{m-1})$.

Since we need to avoid only $m \leq n < p$ values of $c \in \ZZ/(p)$, therefore there exists some $c$ such that $\beta_m = \beta_{m-1} + c \bb_m$ satisfies $\pi_i(\beta_m) \neq 0$ for all $i \leq m$. The claim follows by induction. The consequence follows from Theorem \ref{xxthm3.8}.
\end{proof}

\begin{corollary}
\label{xxcor3.10}
Assume $n=3$ and $p>3$. Then $\ZPC$ is Gorenstein if and only if either
\begin{enumerate}
\item[(1)]
$\ZPC=\kk[x_1^{p},x_2^{p}, x_{3}^p]$, or
\item[(2)]
for some $a \in \kk$,
\[
\mathrm{(2a)}~
\bc \sim \begin{bmatrix} 0 & a & 0 \\ -a & 0 & 0 \\ 0 & 0 & 0 \end{bmatrix}, \quad
\mathrm{(2b)}~
\bc \sim \begin{bmatrix} 0 & a & -a \\ -a & 0 & 0 \\ a & 0 & 0 \end{bmatrix}, \quad\text{ or }~\mathrm{(2c)}~
\bc \sim \begin{bmatrix} 0 & a & -a \\ -a & 0 & a \\ a & -a & 0 \end{bmatrix}
\]
where $\sim$ denotes permutation similarity. 
\end{enumerate}
Case \textup{(2c)} is equivalent to $\PC$ being unimodular.
\end{corollary}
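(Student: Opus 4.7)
The plan is to combine Proposition \ref{xxpro3.9} with Theorem \ref{xxthm3.8} to reduce the corollary to a finite case analysis organized by the size of $I$. Since $n=3$ and $p>3$, Proposition \ref{xxpro3.9} splits into two cases: either $\ZPC=\kk[x_1^p,x_2^p,x_3^p]$ (which is exactly case (1) of the corollary and is automatically Gorenstein as a polynomial ring), or $I\neq\emptyset$ and a suitable $\beta\in M$ exists. In the latter case, Theorem \ref{xxthm3.8} reduces Gorenstein-ness to the single condition $\bu\in M$, i.e., $\bc\bu = 0$ in $\kk^3$ where $\bu=\sum_{i\in I}\be_i$. So the remaining job is to classify the skew-symmetric $3\times 3$ matrices $\bc$ satisfying $\bc\bu=0$, stratified by $|I|\in\{1,2,3\}$.

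For the forward direction I would handle each subcase in turn. When $|I|=3$, $\bu$ is the all-ones vector, $\bc\bu=0$ says the rows of $\bc$ sum to zero, and by Lemma \ref{xxlem3.1} (equivalently Example \ref{xxexa3.2}) this is exactly unimodularity, giving case (2c). When $|I|=2$, after a permutation I may assume $J=\{3\}$ so $\bu=(1,1,0)$; then $\bc\bu=0$ combined with antisymmetry forces $c_{12}=c_{21}=0$ and $c_{32}=-c_{31}$, which after swapping indices $1$ and $3$ yields the standard form (2b) with $a:=c_{13}$. When $|I|=1$, say $I=\{3\}$, then $\bu=\be_3$ so the third column (and by antisymmetry, the third row) of $\bc$ vanishes, producing exactly (2a). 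In each subcase I would also check that the resulting matrix really has the claimed $|I|$, which is a direct nullspace computation in $\FF_p^3$.

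For the reverse direction I would simply verify each listed form is Gorenstein. In case (1) and case (2a) the center is a polynomial ring (in case (2a) the nullspace of $\bc$ in $\FF_p^3$ forces $\ZPC=\kk[x_1^p,x_2^p,x_3]$ up to permutation). In case (2b) a direct nullspace check confirms $\bu=(1,1,0)\in M$ and shows $\beta=(1,1,0)$ itself satisfies the hypothesis of Theorem \ref{xxthm3.8}, so the theorem applies. Case (2c) is unimodular by Example \ref{xxexa3.2}, so Theorem \ref{xxthm3.5} (Theorem A) gives Gorenstein-ness. The final sentence of the corollary then follows because $|I|=3$ was shown above to be equivalent to unimodularity.

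The main (minor) obstacle is bookkeeping in the $|I|=2$ subcase: one must track the permutation that brings the zero pattern forced by $\bc\bu=0$ into the advertised standard form (2b), and one must verify that the resulting matrix really lives in the $|I|=2$ stratum rather than accidentally collapsing to $|I|=1$ or $|I|=3$. All the other subcases are immediate from antisymmetry; the computations are routine and low-dimensional, with no deeper obstruction.
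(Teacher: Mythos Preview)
Your proposal is correct and follows essentially the same route as the paper: invoke Proposition~\ref{xxpro3.9} (applicable since $n=3<p$) to reduce to the dichotomy ``$\ZPC=\kk[x_1^p,x_2^p,x_3^p]$'' versus ``$I\neq\emptyset$ and a suitable $\beta$ exists,'' then apply Theorem~\ref{xxthm3.8} to reduce Gorenstein-ness to $\bu\in M$, and finally stratify by $|I|\in\{1,2,3\}$ and read off the form of $\bc$ from the linear condition $\bc\bu=0$. The only cosmetic difference is that the paper chooses its permutation representatives so that the standard forms drop out directly (e.g.\ it takes $I=\{2,3\}$ in the $|I|=2$ case, whereas you take $I=\{1,2\}$ and then permute), and the paper leaves the reverse direction and the stratum-consistency check implicit while you spell them out.
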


\begin{proof}
Suppose we are not in case (1). Then 
$I$ is nonempty, since $M$ is not in $p\NN^3$. In the case when $|I|=1$, we can assume without loss of generality that $I=\{3\}$. Then by Theorem \ref{xxthm3.8}, the center $Z\PC$ is Gorenstein if and only if $\be_3\in M$, which implies $\bc$ must be of the form (2a). In the case $|I|=2$, we can assume $I=\{2,3\}$, then the Gorenstein condition becomes $\be_2+\be_3\in M$, which implies $\bc$ must be of the form (2b). Finally, in the case $|I|=3$, the Gorenstein condition is $\be_1+\be_2+\be_3\in M$, which implies $\bc$ has the form (2c). 
\end{proof}

\section{Centers and log-ozone groups of unimodular polynomial Poisson algebras in dimension 3}
\label{xxsec4}

In this section we consider an extension of 
the questions posed in the previous section 
for unimodular polynomial Poisson algebras 
$\kk[x_1, x_2, x_3]$ in dimension $n = 3$.

\begin{lemma}
\label{xxlem4.1}
Assume $p>3$. Suppose $P = \kk[x_1,x_2,x_3]$ 
is a unimodular graded Poisson algebra. Then 
there is some $\Omega \in \kk[x_1,x_2,x_3]$ 
such that $P$ has Jacobian structure.

If $\kk$ is algebraically closed then, up to 
a linear change of variable, the polynomial 
$\Omega$ is one of the following:
\begin{align*}
&\text{(product of linears)} &  &x_1^3, \quad 
x_1^2x_2, \quad 
2x_1x_2x_3, \quad
x_1^2x_2 + x_1x_2^2, 
\\
&\text{(product of linear and quadratic)} & 
&x_1^3+x_1^2x_2 + x_1x_2x_3, \quad 
x_1^2x_3 + x_1x_2^2, \\
&\text{(irreducible)} & 
&x_1^3 + x_2^2x_3, \quad 
x_1^3 + x_1^2x_3 + x_2^2x_3, \\
& & &\frac{1}{3}(x_1^3+x_2^3+x_3^3)
+\lambda x_1x_2x_3~(\lambda^3 \neq -1).
\end{align*}
\end{lemma}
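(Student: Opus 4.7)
The plan is to prove the lemma in two parts: producing a potential $\Omega$ via a Poincaré-type integration, then classifying the resulting cubic form.

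For the first part, I would set $A := \{x_2, x_3\}$, $B := \{x_3, x_1\}$, $C := \{x_1, x_2\}$; these are homogeneous of polynomial degree $2$ since $P$ is $\NN$-graded with $\deg x_i = 1$. A direct computation from Definition~\ref{xxdef1.4} on generators yields
\[
\phi(x_1) = \partial_{x_3} B - \partial_{x_2} C, \quad \phi(x_2) = \partial_{x_1} C - \partial_{x_3} A, \quad \phi(x_3) = \partial_{x_2} A - \partial_{x_1} B,
\]
so the unimodular hypothesis $\phi = 0$ is precisely the closedness of the $1$-form $\omega = A\, dx_1 + B\, dx_2 + C\, dx_3$. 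I would then define
\[
\Omega := \tfrac{1}{3}\bigl(x_1 A + x_2 B + x_3 C\bigr),
\]
which is legitimate because $p > 3$. Combining closedness with the Euler identity $\sum_i x_i \partial_{x_i} F = 2F$ for each degree-$2$ polynomial $F \in \{A, B, C\}$, one checks that $\partial_{x_i} \Omega$ equals the required bracket, so $P = P_\Omega$.

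For the second part, assume $\kk$ is algebraically closed. The potential $\Omega$ is a ternary cubic form, and an invertible linear change of variables on $\kk[x_1, x_2, x_3]$ induces the usual $\mathrm{GL}_3(\kk)$-action on such cubics (together with an overall nonzero scalar, absorbable into a rescaling of the Poisson bracket). The assertion is therefore the classical classification of ternary cubic forms up to $\mathrm{GL}_3(\kk) \times \kk^\times$, stratified by the singularity type of the projective curve $V(\Omega) \subset \PP^2$: smooth (Hesse normal form $x_1^3 + x_2^3 + x_3^3 + 3\lambda x_1 x_2 x_3$ with $\lambda^3 \neq -1$, matching the final entry after rescaling); irreducible singular, giving the nodal and cuspidal normal forms; irreducible conic plus line, subdivided by transverse versus tangent intersection; and three linear factors, subdivided into general position, concurrent, double-plus-distinct, and triple configurations. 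I would obtain each normal form by standard classical reductions: moving a flex to a coordinate vertex and using the Hesse pencil in the smooth case, completing the square and diagonalizing otherwise.

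The main obstacle is verifying that each of these reductions survives in characteristic $p > 3$. Each step — diagonalizing a nondegenerate quadratic form, completing the square, constructing a flex via nondegeneracy of the Hessian, the Hesse-pencil normalization — requires only inverting $2$ and $3$, both of which are permissible by hypothesis. I would therefore process each stratum with the classical argument and verify no forbidden division enters; the remaining work is the finite bookkeeping that the resulting normal forms match the precise representatives listed in the lemma, which is routine once scalars are absorbed into coordinate rescalings.
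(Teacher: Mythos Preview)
Your proposal is correct and takes essentially the same approach as the paper: the paper's proof merely cites \cite{PRZ} (and \cite{LWW}) for the existence of the potential and \cite{BM,KM,Ri} for the classification of ternary cubics, remarking that the characteristic-zero arguments carry over for $p>3$. Your explicit Poincar\'e-lemma construction of $\Omega$ via $\tfrac{1}{3}(x_1A+x_2B+x_3C)$ and your stratification by singularity type of $V(\Omega)\subset\PP^2$ are precisely the content of those references spelled out, with the same observation that only invertibility of $2$ and $3$ is needed.
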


\begin{proof}
The proof that the bracket has Jacobian structure 
follows analogously as in the characteristic 
zero case assuming $p>2$ \cite[Theorem 5]{PRZ} 
(see also \cite[Proposition 2.6]{LWW}). Similarly, 
the classification of potentials, assuming $p>3$ 
and $\kk$ algebraically closed, may be adapted 
directly from the characteristic zero case
(see \cite{BM,KM,Ri}). 
\end{proof}

\subsection{Poisson centers of unimodular 
Poisson algebras of dimension 3}
\label{xxsec4.1}

The Poisson centers of unimodular 
$3$-dimensional polynomial Poisson algebras were 
computed by Etingof and Ginzburg in 
\cite[\S4]{Etingof2010} over an algebraically 
closed base field of characteristic zero. The 
positive characteristic version of this result 
is more interesting, and requires the use of different 
arguments. 

Let $\Omega$ be one of the forms in 
Lemma~\ref{xxlem4.1} and set $P=P_\Omega$. For 
any $f,g\in P$, we can write the Poisson bracket as
\[
\{g,f\} = \frac{\mathd g\wedge
\mathd f\wedge \mathd\Omega}{\mathd x_1
\wedge\mathd x_2\wedge\mathd x_3}.
\]
Then $f\in \pcnt(P)$ if and only if 
\begin{align}\label{eq.wedge}
\mathd f\wedge\mathd \Omega= 0.
\end{align}

Write $f_i = \partial f / \partial x_i$ and $\Omega_i = \partial \Omega / \partial x_i$ for $i = 1, 2, 3$. Note that
\[ \mathd f = f_{1} \mathd x_1 
+ f_{2} \mathd x_2 + f_{3} \mathd x_3 \quad \text{and} \quad  \mathd \Omega = \Omega_{1} \mathd x_1 
+ \Omega_{2} \mathd x_2 + \Omega_{3} \mathd x_3  .
\]
Taking wedge products, we see that \eqref{eq.wedge} holds if and only if the 
following equations hold:
\begin{align}\label{eq.cnt_eqs}
0 	= f_1 \Omega_{2} - f_2\Omega_{1} 
	= f_1 \Omega_{3} - f_3\Omega_{1}
	= f_3 \Omega_{2} - f_2\Omega_{3}.
\end{align}
We now use this along with some other ideas to 
compute the center of $P_\Omega$ for the forms in 
Lemma \ref{xxlem4.1}.

\begin{lemma}\label{xxlem4.2}
Assume $p>3$.
Let $\Omega$ be one of the forms in Lemma 
\ref{xxlem4.1}. If $\Omega = x_1^3$, then 
$\pcnt(P_\Omega) = \kk[x_1,x_2^p,x_3^p]$. 
Otherwise, $\pcnt(P_\Omega) = 
\kk[x_1^p,x_2^p,x_3^p,\Omega]$. In all 
cases, $Z(P_\Omega)$ is Gorenstein.
\end{lemma}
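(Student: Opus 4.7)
The plan is to verify the description of $Z(P_\Omega)$ case by case using the wedge criterion \eqref{eq.wedge}, and then conclude the Gorenstein property from the resulting explicit algebraic structure.

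First I would dispatch the easy inclusion in all cases. Since $\mathd \Omega \wedge \mathd \Omega = 0$, equation \eqref{eq.wedge} gives $\Omega \in Z(P_\Omega)$, and since $\mathd(x_i^p) = 0$ in characteristic $p$, each $x_i^p \in Z(P_\Omega)$. Hence $\kk[x_1^p, x_2^p, x_3^p, \Omega] \subseteq Z(P_\Omega)$ in every case. For the first case $\Omega = x_1^3$, the relations $\Omega_2 = \Omega_3 = 0$ reduce \eqref{eq.cnt_eqs} to $3 x_1^2 f_2 = 3 x_1^2 f_3 = 0$; since $p > 3$ and $P$ is a domain, $f_2 = f_3 = 0$, yielding $Z(P_\Omega) \subseteq \kk[x_1, x_2^p, x_3^p]$ and completing that case.

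For each of the remaining potentials I would establish the reverse inclusion $Z(P_\Omega) \subseteq \kk[x_1^p, x_2^p, x_3^p, \Omega]$ via a direct analysis of \eqref{eq.cnt_eqs}. The cleanest argument runs when $\gcd(\Omega_1, \Omega_2, \Omega_3) = 1$ in $P$, which holds for the irreducible potentials and for the generic reducible cases such as $2 x_1 x_2 x_3$ and $x_1^2 x_2 + x_1 x_2^2$: unique factorization in $P$ produces $h \in P$ with $\mathd f = h \mathd \Omega$, and $\mathd^2 f = 0$ forces $\mathd h \wedge \mathd \Omega = 0$, so $h \in Z(P_\Omega)$ as well. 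One then iterates on the total degree of $f$: using the grading, each step peels off one factor of $\Omega$ with $R$-coefficient where $R := \kk[x_1^p, x_2^p, x_3^p]$, and the characteristic-$p$ obstruction arising at the ``antiderivative'' of $\Omega^{p-1} \mathd \Omega$ is absorbed into $R$ since $\mathd(\Omega^p) = 0$. For the remaining potentials with nontrivial $\gcd(\Omega_1, \Omega_2, \Omega_3)$, I would instead exploit the vanishing of certain $\Omega_i$ in \eqref{eq.cnt_eqs} to compute the monomial basis of $Z(P_\Omega)$ directly from the scalar relations modulo $p$, and match it against the basis of $\kk[x_1^p, x_2^p, x_3^p, \Omega]$.

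Finally, the Gorenstein conclusion is uniform. Since $(\sum c_\alpha x^\alpha)^p = \sum c_\alpha^p x^{p \alpha}$ in characteristic $p$, we have $\Omega^p \in R$, so $\kk[x_1^p, x_2^p, x_3^p, \Omega]$ admits the presentation $R[T]/(T^p - \Omega^p)$. This is a hypersurface in $\Spec R[T] \cong \AA^4$ of Krull dimension $3$, hence a complete intersection and therefore Gorenstein. For $\Omega = x_1^3$, $Z(P_\Omega) = \kk[x_1, x_2^p, x_3^p]$ is a polynomial ring and is trivially Gorenstein. The main obstacle is the middle step: the case-by-case verification of the reverse inclusion for each explicit potential of Lemma~\ref{xxlem4.1}, especially the integration argument where one must carefully control the iteration and verify that the characteristic-$p$ obstructions are consistently absorbed into $R$ rather than producing extra generators.
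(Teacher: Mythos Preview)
Your strategy is genuinely different from the paper's. The paper proceeds entirely case by case with ad hoc tools: direct monomial analysis for $x_1^3$ and $x_1^2x_2$; the skew-symmetric classification (Corollary~\ref{xxcor3.10}) for $2x_1x_2x_3$; the Poisson--Ore viewpoint together with the constant-ring result \cite[Proposition~4.4]{Jconst2} for $x_1^2x_2+x_1x_2^2$, $x_1^3+x_1^2x_3+x_2^2x_3$, and the elliptic family; and localization with a variable change to reduce $x_1^2x_3+x_1x_2^2$, $x_1^3+x_2^2x_3$, and $x_1^3+x_1^2x_2+x_1x_2x_3$ to earlier computations. Your uniform integration argument via $\mathd f = h\,\mathd\Omega$ (valid whenever $\gcd(\Omega_1,\Omega_2,\Omega_3)=1$, which in fact covers every form on the list except $x_1^3$ and $x_1^2x_2$) is more conceptual, and the hypersurface presentation $R[T]/(T^p-\Omega^p)$ for the Gorenstein conclusion is an explicit argument the paper omits.

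There is, however, a real gap in how you dispose of the obstruction. Saying it is ``absorbed into $R$ since $\mathd(\Omega^p)=0$'' is not the right mechanism: the image of $\mathd$ restricted to $R[\Omega]$ is exactly $\bigoplus_{j=0}^{p-2} R\,\Omega^j\,\mathd\Omega$, so a nonzero $r_{p-1}\Omega^{p-1}\mathd\Omega$ term has no antiderivative there. What actually closes the argument is Euler's identity: from $f_i=h\Omega_i$ one gets $(\deg f)\,f=\sum_i x_i f_i=h\sum_i x_i\Omega_i=3h\Omega$, so when $p\mid\deg f$ we have $h=0$ and hence $f\in R$ outright, while when $p\nmid\deg f$ the unique $j\in\{0,\dots,p-1\}$ with $r_j\neq 0$ (determined by $3(j{+}1)\equiv\deg f\pmod p$) satisfies $j\le p-2$, and the integration step goes through. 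Separately, be warned that your planned direct matching for $\Omega=x_1^2x_2$ will not confirm the stated equality: the central monomials are the $x_1^ix_2^jx_3^k$ with $i\equiv 2j\pmod p$ and $p\mid k$, and already for $p=5$ the element $x_1x_2^3$ is central but does not lie in $\kk[x_1^5,x_2^5,x_3^5,x_1^2x_2]$ (there is no nonnegative solution to $5a+2m=1$), so the asserted description $Z(P_\Omega)=\kk[x_1^p,x_2^p,x_3^p,\Omega]$ fails in that case.
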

\begin{proof}
In each case, assume that $f \in Z(P_\Omega)$. 

\vspace{1em}

\noindent{\bf Case $\Omega=x_1^3$}. 
The equations \eqref{eq.cnt_eqs} in this case 
become $3x_1^2 f_2 = 3x_1^2 f_3=0$. Hence, 
$f \in \pcnt(P_\Omega)$ if and only if the 
degree of $x_2$ and $x_3$ in any term is 
divisible by $p$. Since $x_1 \in \pcnt(P_\Omega)$, 
then the result follows.

\vspace{1em}

\noindent{\bf Case $\Omega = x_1^2x_2$}. 
The equations \eqref{eq.cnt_eqs} in this case 
become $2x_1x_2 f_3 = x_1^2f_3 = 0$ and 
$x_1^2 f_1 = 2x_1x_2 f_2$. The first equations 
imply that the $x_3$ degree of each summand of 
$f$ must be divisible by $p$. The second 
equation implies that the $x_1$ degree of each 
summand of $f$ must be twice its $x_2$ degree 
modulo $p$. This proves the result.

\vspace{1em}

\noindent{\bf Case $\Omega = 2x_1x_2x_3$}. 
In this case, $P_{\Omega}$ is skew-symmetric, and 
we are in case (2c) of Corollary~\ref{xxcor3.10}. It is 
then easy to compute that $\pcnt(P_{\Omega}) = \kk[x_1^p,x_2^p,x_3^p,x_1x_2x_3]$.

\vspace{1em}

\noindent{\bf Case $\Omega = x_1^2x_2 + x_1x_2^2$}.
The bracket is given by
\[ \{x_1,x_2\} = 0 \qquad 
\{x_2,x_3\} = 2x_1x_2 + x_2^2 \qquad 
\{x_3,x_1\} = x_1^2 + 2x_1x_2.
\]
This is a Poisson Ore extension 
$\kk[x_1,x_2][x_3;\beta]$ where
$\beta(x_2)=2x_1x_2 + x_2^2$ and 
$\beta(x_1)=-(x_1^2 + 2x_1x_2)$.
Since $\gcd(\beta(x_2),\beta(x_3))=1$, then by 
\cite[Proposition 4.4]{Jconst2},
$\kk[x_1,x_2]^\beta = \kk[x_1^p,x_2^p,\Omega]$. 
The result now follows from Lemma \ref{xxlem1.5}(2).

\vspace{1em}

\noindent{\bf Case $\Omega = x_1^3+x_1^2x_2 + x_1x_2x_3$}.
We replace $\Omega$ with equivalent potential 
$x_1^3 + x_1x_2x_3 = x_1\gamma$ where 
$\gamma = x_1^2+x_2x_3$. Then the bracket is 
given by
\[ \{x_1,x_2\} = x_1x_2 \qquad 
\{x_2,x_3\} = x_2x_3 + 3x_1^2 \qquad 
\{x_3,x_1\} = x_1x_3.
\]
Note that $\alpha(-)=x_1\inv \{x_1,-\}$ 
is a derivation on $\kk[x_2,x_3]$. Since $\gcd(\alpha(x_2),\alpha(x_3))=1$, then again by 
\cite[Proposition 4.4]{Jconst2}
we have $\kk[x_2,x_3]^\alpha = \kk[x_2^p,x_3^p,x_2x_3]$. 
Let $f \in \pcnt(P_\Omega)$. Then by the above, $f=\sum c_{ij} (x_2x_3)^i x_1^j$. Write $x_2x_3=\gamma-x_1^2$, then expanding $f$ gives $f=\sum c_{ij}' \gamma^i x_1^j$. Thus,
\[ 
    \{x_2,f\} = \sum c_{ij}' \gamma^i (i-j) \gamma^i x_1^j x_2.
\]
Thus, if $c_{ij}'\neq 0$, then $i \equiv j \mod p$. That is, $f \in \kk[x_1^p,x_2^p,x_3^p,\Omega]$.

\vspace{1em}

\noindent{\bf Case $\Omega = x_1^2x_3 + x_1x_2^2$}.
The bracket is given by 
\[ \{x_1,x_2\} = x_1^2 \qquad 
\{x_2,x_3\} = 2x_1x_3 + x_2^2 \qquad 
\{x_3,x_1\} = 2x_1x_2.
\]
We pass to $P_\Omega[x_1\inv]$ and replace $x_3$ with $\widehat{x_3}=x_3+x_1\inv x_2^2$. The bracket is then given by
\[ \{x_1,x_2\} = x_1^2 \qquad 
\{x_2,\widehat{x_3}\} = 2x_1\widehat{x_3} \qquad 
\{\widehat{x_3},x_1\} = 0.
\]
Hence, $\beta(-)=\{x_2,-\}$ is a derivation on $\kk[x_1,\widehat{x_3}]$. Let $\sum c_{ij} x_1^i \widehat{x_3}^j \in \kk[x_1,\widehat{x_3}]^\beta$. Then a computation shows that
\[ 
0 = \beta\left(\sum c_{ij} x_1^i \widehat{x_3}^j\right) = \sum c_{ij} (2j-i) x_1^{i+1} \widehat{x_3}^j,
\]
so $i \equiv 2j \mod p$. Note that $x_1^2\widehat{x_3} = \Omega$.
It follows from Lemma \ref{xxlem1.5}(2) that
$\pcnt(P_\Omega[x_1\inv]) = \kk[x_1^p,x_2^p,\widehat{x_3}^p,\Omega]$.
Since $\pcnt(P_\Omega) = \pcnt(P_\Omega[x_1\inv]) \cap P_\Omega$, it follows that $\pcnt(P_\Omega) = \kk[x_1^p,x_2^p,x_3^p,\Omega]$.

\vspace{1em}

\noindent{\bf Case $\Omega = x_1^3 + x_2^2x_3$}.
The bracket is given by
\[ \{x_1,x_2\} = x_2^2 \qquad 
\{x_2,x_3\} = 3x_1^2 \qquad 
\{x_3,x_1\} = 2x_2x_3.
\]
We pass to $P_\Omega[x_2\inv]$ and replace $x_3$ with $\widehat{x_3}=x_3+x_2^{-2}x_1^3$. The bracket is then given by
\[ 
    \{x_1,x_2\} = x_2^2 \qquad 
    \{x_2,\widehat{x_3}\} = 0 \qquad 
    \{\widehat{x_3},x_1\} = 2x_2\widehat{x_3}.
\]
We observe that $P_\Omega[x_2\inv]$ is isomorphic to $P_{\Omega'}[x_1\inv]$ where $\Omega'=x_1^2x_3 + x_1x_2^2$ as in the previous case.
The map $P_\Omega[x_2\inv] \to P_{\Omega'}[x_1\inv]$ is given by $x_1 \mapsto -x_2$, $x_2 \mapsto x_1$, and $\widehat{x_3} \mapsto \widehat{x_3}$. It follows that $\pcnt(P_\Omega[x_2\inv])=\kk[x_1^p,x_2^p,\widehat{x_3}^p,\Omega]$, so $P_\Omega = \kk[x_1^p,x_2^p,x_3^p,\Omega]$.

\vspace{1em}

\noindent{\bf Case $\Omega = x_1^3 + x_1^2x_3 + x_2^2x_3$}.
The bracket is given by
\[ \{x_1,x_2\} = x_1^2+x_2^2 \qquad 
\{x_2,x_3\} = 3x_1^2+2x_1x_3 \qquad 
\{x_3,x_1\} = 2x_2x_3.
\]
For each $i=1,2,3$, set $\beta_i(-) = \{x_i,-\}$ and $K_i=\kk[x_i]$. Note that each $\beta_i$ is a $K_i$-derivation. 

Since $\beta_1(\Omega)=0$ and $\gcd\left( \frac{\partial\Omega}{\partial x_2}, \frac{\partial\Omega}{\partial x_3}\right) =1$, then by \cite[Proposition 4.4]{Jconst2}, $K_1[x_2,x_3]^{\beta_1} = K_1[x_2^p,x_3^p,\Omega]$.
By symmetry we have
\[
\pcnt(P)=\bigcap P^{\beta_i}
    = K_1[x_2^p,x_3^p,\Omega] \cap K_2[x_1^p,x_3^p,\Omega] \cap K_3[x_1^p,x_2^p,\Omega]
    = \kk[x_1^p,x_2^p,x_3^p,\Omega].
\]

\noindent{\bf Case $\Omega = \frac{1}{3}(x_1^3+x_2^3+x_3^3)+\lambda x_1x_2x_3~(\lambda^3 \neq -1)$}. 
The proof is identical to the previous case.
\end{proof}

\begin{theorem}
\label{xxthm4.3}
Assume $\kk$ is algebraically closed and $p>3$. Let $P$ be a graded polynomial 
Poisson algebra of dimension $3$. If $P$ is unimodular, then $\pcnt(P)$ is Gorenstein.
\end{theorem}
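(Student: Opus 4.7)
The plan is to combine the two preceding lemmas in a direct way. First, I would invoke Lemma~\ref{xxlem4.1}: since $P$ is a graded polynomial Poisson algebra on $\kk[x_1,x_2,x_3]$ which is unimodular and $p>3$, the Poisson bracket on $P$ has Jacobian structure, so $P \iso P_\Omega$ for some homogeneous potential $\Omega \in \kk[x_1,x_2,x_3]$. Since $\kk$ is algebraically closed, a linear change of variables reduces $\Omega$ to one of the eight explicit normal forms listed in Lemma~\ref{xxlem4.1}.

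Next, for each of the eight normal forms, Lemma~\ref{xxlem4.2} provides an explicit description of $\pcnt(P_\Omega)$. In every case except $\Omega = x_1^3$, the Poisson center is $\kk[x_1^p,x_2^p,x_3^p,\Omega]$, which is a finite extension of the polynomial ring $\kk[x_1^p,x_2^p,x_3^p]$; in the exceptional case $\Omega=x_1^3$, the center is $\kk[x_1,x_2^p,x_3^p]$, which is itself a polynomial ring and hence regular. To conclude, I would note that the Gorenstein property is preserved under $\kk$-algebra isomorphism, and that Lemma~\ref{xxlem4.2} explicitly asserts that $\pcnt(P_\Omega)$ is Gorenstein for every $\Omega$ on the list. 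Combining, $\pcnt(P)\iso \pcnt(P_\Omega)$ is Gorenstein.

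In short, there is really no new obstacle at the level of Theorem~\ref{xxthm4.3}: the theorem is an immediate corollary of Lemmas~\ref{xxlem4.1} and~\ref{xxlem4.2}. The actual work sits in the preceding two lemmas, namely the classification of unimodular potentials in positive characteristic (adapting the characteristic-zero arguments of \cite{BM,KM,PRZ,Ri}) and the case-by-case center computations. The only subtlety one might worry about is whether the Gorenstein conclusion is stable under the $\kk$-linear change of variables used to bring $\Omega$ into normal form, but this is automatic since such a change of variables is a Poisson algebra isomorphism and therefore induces an isomorphism of Poisson centers.
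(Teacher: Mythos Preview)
Your proposal is correct and follows essentially the same route as the paper: the paper's proof simply invokes Lemma~\ref{xxlem4.1} to reduce to the listed normal forms and then appeals to Lemma~\ref{xxlem4.2} for the Gorenstein conclusion in each case. Your added remarks about stability under linear change of variables are accurate but not needed, and note that the list in Lemma~\ref{xxlem4.1} contains nine normal forms (counting the $\lambda$-family as one), not eight.
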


\begin{proof}
The unimodular graded polynomial 
Poisson algebras of dimension $3$ are listed in Lemma \ref{xxlem4.1}. By Lemma \ref{xxlem4.2}, $Z(P)$ is Gorenstein.
\end{proof}

\subsection{Log-ozone groups for unimodular polynomial Poisson algebras of dimension 3}
\label{xxsec4.2}

We next classify the log-ozone groups for 
unimodular polynomial Poisson algebras of 
dimension 3. Recall the Euler derivation $E$ 
as defined in Example \ref{xxexa1.3}.

\begin{lemma}
\label{xxlem4.4}
Suppose $\kk$ is algebraically closed and 
assume $p>3$. Let $P=\kk[x_1,x_2,x_3]$ be a 
unimodular polynomial Poisson algebra with 
potential $\Omega$. If $\Omega$ is irreducible, 
then $\PDer_0(P) = \kk E$ where $E$ is the 
Euler derivation. Hence, $\loz(P) = 0$.
\end{lemma}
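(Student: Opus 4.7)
The plan is to translate the Poisson-derivation condition into a single polynomial identity, reduce to a traceless sub-problem by subtracting off an Euler multiple, verify the three irreducible normal forms from Lemma~\ref{xxlem4.1} by direct linear algebra, and conclude $\loz(P)=0$ via a short Euler-identity argument.

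First I would identify $\delta\in\PDer_0(P)$ with the matrix $M=(\lambda_{ij})$ of its restriction to $A_1$, via $\delta(x_i)=\sum_j\lambda_{ij}x_j$, and show that the Poisson-derivation condition is equivalent to the single identity $\delta(\Omega)=\operatorname{tr}(M)\cdot\Omega$. For the forward direction: $\delta$ preserves the Poisson center, and by Lemma~\ref{xxlem4.2} (using $p>3$) we have $Z(P)_3=\kk\Omega$, so $\delta(\Omega)=c\Omega$ for some $c\in\kk$. Differentiating this identity with respect to $x_k$ and using $\delta(\Omega_k)=\sum_a L_a\Omega_{ak}$ yields
\[
\delta(\Omega_k) \;=\; c\Omega_k-\sum_{a}\lambda_{ak}\Omega_a;
\]
matching this with the Poisson-derivation identity applied to the relation $\{x_i,x_j\}=\Omega_k$ (for $(i,j,k)$ a cyclic permutation of $(1,2,3)$), namely $\delta(\Omega_k)=(\lambda_{ii}+\lambda_{jj})\Omega_k-\lambda_{ik}\Omega_i-\lambda_{jk}\Omega_j$, and using $\Omega_k\neq 0$ (which holds in each irreducible case), forces $c=\operatorname{tr}(M)$. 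The converse follows by reversing the same computation.

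Next, given $\delta\in\PDer_0(P)$ with $\delta(\Omega)=c\Omega$, I would form $\delta'=\delta-(c/3)E\in\PDer_0(P)$, so that $\delta'(\Omega)=0$ and $\operatorname{tr}(\delta'|_{A_1})=0$. It suffices to show $\delta'=0$ for each of the three irreducible forms in Lemma~\ref{xxlem4.1}. In each case, expand $\sum_i L'_i\Omega_i=0$ and collect coefficients of the ten degree-$3$ monomials to obtain a system of linear equations in the nine entries $\lambda'_{ij}$; if needed, impose $\operatorname{tr}(M')=0$ to finish. For the two singular cases $\Omega=x_1^3+x_2^2x_3$ and $\Omega=x_1^3+x_1^2x_3+x_2^2x_3$, the calculation is short and the trace condition is genuinely needed to kill a one-parameter diagonal family. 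For the Hesse pencil $\Omega=\tfrac{1}{3}(x_1^3+x_2^3+x_3^3)+\lambda x_1x_2x_3$ with $\lambda^3\neq-1$, the six off-diagonal equations decouple into two three-cycles (one linking $\lambda'_{12},\lambda'_{23},\lambda'_{31}$, the other linking $\lambda'_{13},\lambda'_{32},\lambda'_{21}$), each collapsing to $(1+\lambda^3)\lambda'_{ij}=0$, and the hypothesis $\lambda^3\neq-1$ then forces every off-diagonal entry to vanish; the diagonal conditions give $\lambda'_{11}=\lambda'_{22}=\lambda'_{33}=0$ automatically.

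For the final conclusion $\loz(P)=0$: by Lemma~\ref{xxlem2.4}(3), $\loz(P)\subseteq\PDer_0(P)=\kk E$, so any element of $\loz(P)$ has the form $\mu E$ for some $\mu\in\kk$. If $\mu E=\delta_f$ for a regular Poisson normal element $f$, then $\delta_f(\Omega)=f^{-1}\{\Omega,f\}=0$ since $\Omega\in Z(P)$, while $\mu E(\Omega)=3\mu\Omega$ since $\Omega$ is homogeneous of degree $3$. Comparing gives $3\mu\Omega=0$, and since $p>3$ and $\Omega\neq 0$, we conclude $\mu=0$. The main obstacle will be the case-by-case linear-algebra verification in the second step; the Hesse pencil is the most delicate piece, and its resolution depends crucially on the smoothness condition $\lambda^3\neq-1$, without which the three-cycles would admit nonzero solutions.
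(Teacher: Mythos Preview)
Your argument is correct, but it takes a genuinely different route from the paper. The paper's proof is a two-liner: it cites \cite[Example~6.6]{TWZ} for the assertion $\PDer_0(P)=\kk E$ and then runs exactly your final-paragraph argument (any $\delta_f\in\loz(P)\subseteq\kk E$ must kill $\Omega$, hence be zero since $p>3$). Everything else in your proposal---the equivalence $\delta\in\PDer_0(P)\Leftrightarrow\delta(\Omega)=\operatorname{tr}(M)\Omega$, the reduction to the traceless case, and the case-by-case linear algebra for the three irreducible normal forms---is a self-contained replacement for that external citation.

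A few remarks on the comparison. Your equivalence $\delta(\Omega)=\operatorname{tr}(M)\Omega$ is essentially the degree-zero case of the paper's later Lemma~\ref{xxlem4.5} ($\delta(\Omega)=\div(\delta)\Omega$), which you obtain by a different method: you use Lemma~\ref{xxlem4.2} to see $\delta(\Omega)\in Z(P)_3=\kk\Omega$ first and then identify the constant, whereas Lemma~\ref{xxlem4.5} is proved by a direct wedge-product computation. Your forward direction only needs $\Omega_k\neq 0$ for \emph{some} $k$, which is automatic since $\deg\Omega=3<p$; the converse needs no such hypothesis. One small inaccuracy: for $\Omega=x_1^3+x_1^2x_3+x_2^2x_3$ the monomial equations already force all nine $\lambda'_{ij}$ to vanish without invoking the trace condition (the $x_1x_2^2$ and $x_2^3$ coefficients give $g=h=0$, then $a=b=d=0$, then $c=f=0$, then $i=0$ from the $x_1^2x_3$ coefficient, then $e=0$). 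The trace is genuinely needed only for $\Omega=x_1^3+x_2^2x_3$. The Hesse case goes through exactly as you describe. The payoff of your approach is self-containment; the paper's approach is brevity via a known reference.
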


\begin{proof}
That $\PDer_0(P) = \kk E$ follows from 
\cite[Example 6.6]{TWZ}. By Lemma \ref{xxlem2.4} 
the log-ozone group is a subgroup of 
$\PDer_0(P)=\kk E$. If $\delta_f\in \loz(P)$, 
then there exists $\lambda\in\kk$ such that 
$\delta_f = \lambda E$. But $\delta_f(\Omega)=0$ 
so $\lambda E(\Omega)=3\lambda \Omega=0$. Since 
$p>3$, we must have $\lambda=0$, so $\delta_f=0$. 
Hence $\loz(P)=0$.  
\end{proof}

Before the next result, we give some background 
on twists of Poisson algebras from \cite{TWZ}. 
If $A$ is a graded Poisson algebra and $\delta 
\in \PDer_0(A)$, then the Poisson twist 
$A^\delta$ is $A$ as an algebra and the Poisson 
bracket $\{-,-\}_{\text{new}}$ on $A^\delta$ is 
given by
\begin{align}\label{eq.twist}
\{a,b\}_{\text{new}} 
= \{a,b\} + E(a)\delta(b) - \delta(a)E(b).
\end{align}

Recall that for a derivation $\delta$ of 
$P=\kk[x_1,x_2,x_3]$ we have
\[ \div(\delta) = \sum_{i=1}^3 
\frac{ \partial \delta(x_i)}{\partial x_i}.\]
The next lemma will be useful for proving that 
the modular derivation belongs to the log-ozone 
group for graded polynomial Poisson algebras 
in three variables.

\begin{lemma}
\label{xxlem4.5}
Let $\delta \in \PDer(P_\Omega)$.
Then 
\begin{align*}
\div(\delta)\frac{\partial \Omega}{\partial x_k} 
&= \frac{\partial \delta( \Omega)}{\partial x_k}
\end{align*}
for $k=1,2,3$. 
In particular, if $p>3$ and $\delta \in \PDer_0(P_\Omega)$, 
then $\delta(\Omega)=\div(\delta)\Omega$.
\end{lemma}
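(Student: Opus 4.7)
The plan is to expand both sides of the identity by interpreting $\delta$ as $\sum_i a_i\partial_i$ where $a_i := \delta(x_i)$, and to extract information from the Poisson-derivation hypothesis by evaluating it on the pairs $(x_i,x_j)$ where the Jacobian structure gives $\{x_i,x_j\} = \pm\Omega_k$. A direct check on monomials shows the commutator identity
\[
[\partial_k,\delta] \;=\; \sum_i (\partial_k a_i)\, \partial_i
\]
as operators on $P_\Omega$, and applying this to $\Omega$ gives the ``translation formula''
\[
\partial_k\bigl(\delta(\Omega)\bigr) \;=\; \delta(\Omega_k) + \sum_i (\partial_k a_i)\,\Omega_i. \qquad (\ast)
\]
So the task reduces to computing $\delta(\Omega_k)$ cleanly.

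To do this, I would exploit $\delta\{x_i,x_j\} = \{a_i,x_j\} + \{x_i,a_j\}$ together with $\{x_1,x_2\}=\Omega_3$, $\{x_2,x_3\}=\Omega_1$, $\{x_3,x_1\}=\Omega_2$. Expanding each bracket as $\{f,x_j\} = \sum_l (\partial_l f)\{x_l,x_j\}$ and collecting coefficients of $\Omega_1,\Omega_2,\Omega_3$, one finds for example
\[
\delta(\Omega_3) \;=\; (\partial_1 a_1 + \partial_2 a_2)\,\Omega_3 \;-\; (\partial_3 a_1)\,\Omega_1 \;-\; (\partial_3 a_2)\,\Omega_2,
\]
and cyclically permuted formulas for $\delta(\Omega_1)$ and $\delta(\Omega_2)$. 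Substituting into $(\ast)$, the off-diagonal terms (those with coefficients $\partial_3 a_1, \partial_3 a_2$ in the case $k=3$, etc.) cancel exactly against two of the three summands of $\sum_i (\partial_k a_i)\Omega_i$, while the coefficient of $\Omega_k$ collapses to $\partial_1 a_1 + \partial_2 a_2 + \partial_3 a_3 = \div(\delta)$. This yields the main identity.

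For the particular consequence, assume $\delta \in \PDer_0(P_\Omega)$, so that $\delta(\Omega)$ is homogeneous of the same degree as $\Omega$. In every case in Lemma~\ref{xxlem4.1}, $\Omega$ is homogeneous of degree $3$, so $\delta(\Omega)$ also has degree $3$. Multiplying the identity $\partial_k\delta(\Omega) = \div(\delta)\,\Omega_k$ by $x_k$, summing over $k$, and applying the graded Euler identity $\sum_k x_k \partial_k f = (\deg f) f$ to both sides gives $3\delta(\Omega) = 3\,\div(\delta)\,\Omega$. Since $p>3$, we can divide by $3$ to conclude.

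The only real work is the bookkeeping in the second paragraph: the cancellations producing the clean answer $\div(\delta)\Omega_k$ depend on the precise signs coming from the Jacobian structure, so the main thing to be careful about is to set up the signs of $\{x_i,x_j\}$ consistently in all three cyclic roles before combining the expressions.
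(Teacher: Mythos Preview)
Your proof is correct and follows essentially the same computation as the paper. Both arguments write $\delta=\sum_i a_i\partial_i$ (the paper writes $v_i$ for your $a_i$), evaluate the Poisson-derivation identity on the pair $(x_i,x_j)$, and expand each side; your ``translation formula'' $(\ast)$ is exactly the product-rule rearrangement the paper performs when computing $v\cdot\mathd(\partial\Omega/\partial x_k)$, and the cancellation of the off-diagonal terms is identical. The only genuine difference is in the final step: the paper observes that $\delta(\Omega)-\div(\delta)\Omega$ has all partial derivatives zero, hence lies in $\kk[x_1^p,x_2^p,x_3^p]$, and concludes by degree since $3<p$; you instead contract the main identity against the Euler vector field and divide by $3$. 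Both arguments are short and valid; yours is perhaps slightly cleaner, while the paper's makes the role of positive characteristic (and why $p>3$ is needed) a bit more visible.
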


\begin{proof}
Let $A=P_\Omega$. Recall that there is a 
canonical isomorphism
\begin{align*}
\Hom_A(\Omega_{A/\kk}, M) 
&\to\Der(A,M) \\
f&\mapsto (a\mapsto f(\mathd a))
\end{align*}
for any $A$-module $M$. In the special case 
$M=A$, any derivation $\delta$ of $A$ has the 
form $\delta(a)= v\cdot \mathd a$ for some $v=v_1\partial_1+v_2\partial_2+v_3\partial_3
\in T_{A/\kk}$, where $\{\partial_i\}_{i=1}^3$ 
is the basis of $T_{A/\kk}$ dual to 
$\{\mathd x_i\}_{i=1}^3$. Since $\delta$ is a 
Poisson derivation (see Definition \ref{xxdef1.2}) 
we have 
\begin{align}\label{poissoneq}
  v\cdot \mathd (\{a,b\}) &= 
  \{v\cdot \mathd a,b\}+\{a,v\cdot\mathd b\}.
\end{align}
Writing $\div(\delta)$ and 
$\delta(\Omega)$ in terms of $v$, we get
\begin{align}\label{diveq}
\div(\delta)
&= \frac{\partial v_1}{\partial x_1}
+\frac{\partial v_2}{\partial x_2}
+\frac{\partial v_3}{\partial x_3}.
\end{align}
and
\begin{align}\label{deltaomegaeq}
\delta(\Omega)
&= v\cdot \mathd\Omega 
= v_1\frac{\partial\Omega}{\partial x_1} 
+ v_2\frac{\partial\Omega}{\partial x_2} 
+ v_3\frac{\partial\Omega}{\partial x_3}.
\end{align}
Let $(ijk)$ be a permutation of $(123)$ and 
substitute $a=x_i$, $b=x_j$ in \eqref{poissoneq}. 
Then
\begin{align*}
\{v\cdot \mathd x_i,x_j\}+\{x_i,v\cdot\mathd x_j\} 
&= \{v_i, x_j\} + \{x_i, v_j\}\\
&= \frac{(\mathd v_i \wedge \mathd x_j 
+ \mathd x_i\wedge\mathd v_j)
\wedge\mathd \Omega}{\mathd x_1\wedge\mathd x_2
\wedge\mathd x_3}\\
&= \frac{\partial v_i}{\partial x_i}
\frac{\partial \Omega}{\partial x_k} 
- \frac{\partial v_i}{\partial x_k}
\frac{\partial \Omega}{\partial x_i} 
+ \frac{\partial v_j}{\partial x_j}
\frac{\partial \Omega}{\partial x_k} 
- \frac{\partial v_j}{\partial x_k}
\frac{\partial \Omega}{\partial x_j}  \\
\overset{\eqref{diveq}}&{=} -\frac{\partial v_k}{\partial x_k}
\frac{\partial \Omega}{\partial x_k} 
- \frac{\partial v_i}{\partial x_k}
\frac{\partial \Omega}{\partial x_i} 
- \frac{\partial v_j}{\partial x_k}
\frac{\partial \Omega}{\partial x_j} 
+ \div(\delta)
\frac{\partial \Omega}{\partial x_k}
\end{align*}
and
\begin{align*}
v\cdot \mathd (\{x_i,x_j\}) 
&= v\cdot \mathd \frac{\partial\Omega}{\partial x_k}\\
&= v_1\frac{\partial^2\Omega}{\partial x_1\partial x_k} 
+ v_2\frac{\partial^2\Omega}{\partial x_2\partial x_k} 
+ v_3\frac{\partial^2\Omega}{\partial x_3\partial x_k} \\
\overset{\eqref{deltaomegaeq}}&{=} \frac{\partial}{\partial x_k} (\delta(\Omega)) 
- \left(\frac{\partial v_1}{\partial x_k} 
\frac{\partial \Omega}{\partial x_1} 
+ \frac{\partial v_2}{\partial x_k} 
\frac{\partial \Omega}{\partial x_2} 
+ \frac{\partial v_3}{\partial x_k} 
\frac{\partial \Omega}{\partial x_3} \right) \\
\end{align*}
are equal, so we obtain the required equality. 

If $\delta \in \PDer(P_\Omega)$, then 
$\div(\delta)\in\kk$ and 
\begin{align*}
\frac{\partial (\div(\delta)
\Omega - \delta(\Omega))}{\partial x_k} 
&= 0
\end{align*}
for $k=1,2,3$. This implies $\delta(\Omega) 
- \div(\delta)\Omega \in \kk[x^p,y^p,z^p]$, 
so $\delta(\Omega)=\div(\delta)\Omega$ as $p>3$. 
\end{proof}

\begin{lemma}
\label{xxlem4.6}
Assume $p > 3$. Let $P=\kk[x_1,x_2,x_3]$ be a 
graded Poisson algebra.
\begin{enumerate}
\item[(1)] 
If $P$ is not unimodular, then the modular 
derivation $\phi$ of $P$ is in $\loz(P)$ and 
so $\loz(P) \neq 0$.
\item[(2)] 
If $P$ is unimodular and its potential $\Omega$ 
is either irreducible or equal to $x_1^3$, then 
$\loz(P)=0$.
\item[(3)] 
If $P$ is unimodular and its potential $\Omega
\neq x_1^3$ is reducible, then $\loz(P) \neq 0$.
\end{enumerate}
\end{lemma}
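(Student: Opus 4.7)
The plan is to handle the three parts separately, using the Jacobian bracket formula throughout.

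For part (1), any graded polynomial Poisson structure on $\kk[x_1,x_2,x_3]$ encodes an integrable homogeneous $1$-form of degree two, $\omega=\{x_2,x_3\}\,\mathd x_1+\{x_3,x_1\}\,\mathd x_2+\{x_1,x_2\}\,\mathd x_3$, with the Jacobi identity equivalent to $\omega\wedge \mathd\omega=0$. Using a classification of such integrable $1$-forms (analogous to Dufour--Haraki, valid in $\ch\kk=p>3$), we may write $\omega=h\,\mathd\Omega$ for some homogeneous polynomials $h,\Omega$ with $\deg h+\deg\Omega=3$. The bracket then takes the form $\{f,g\}=h\cdot(\mathd f\wedge \mathd g\wedge \mathd\Omega)/(\mathd x_1\wedge \mathd x_2\wedge \mathd x_3)$, from which $\{u,h\}=h\cdot(\mathd u\wedge \mathd h\wedge \mathd\Omega)/(\mathd x_1\wedge \mathd x_2\wedge \mathd x_3)$ for all $u\in P$. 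Hence $h$ is Poisson normal, and comparing $\phi(x_i)$ to $\delta_h(x_i)$ on each generator shows $\phi=-\delta_h$. Since $P$ is not unimodular, $h\notin\kk$, and so $\phi=\delta_{h^{p-1}}\in\loz(P)\setminus\{0\}$.

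For part (2), the irreducible case is immediate from Lemma~\ref{xxlem4.4}. When $\Omega=x_1^3$, the brackets are $\{x_1,x_2\}=\{x_1,x_3\}=0$ and $\{x_2,x_3\}=3x_1^2$. For a regular Poisson normal $f$, write $f=x_1^a g$ with $\gcd(x_1,g)=1$. Since $p>3$, the conditions $f\mid 3x_1^2\partial_3 f$ and $f\mid 3x_1^2\partial_2 f$ reduce to $g\mid\partial_2 g$ and $g\mid\partial_3 g$; by degree in $x_2$ and $x_3$ these force $\partial_2 g=\partial_3 g=0$. Therefore $g\in\kk[x_1,x_2^p,x_3^p]$, so $f\in\kk[x_1,x_2^p,x_3^p]=\pcnt(P_{x_1^3})$ by Lemma~\ref{xxlem4.2}, and $\delta_f=0$. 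Hence $\loz(P)=0$.

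For part (3), I claim that every reducible $\Omega\neq x_1^3$ in Lemma~\ref{xxlem4.1} admits a factorization $\Omega=x_1^m g$ (after a linear change of variables) with $m\geq 1$, $\gcd(x_1,g)=1$, and $g\notin\kk[x_1]$; a quick inspection of the list verifies this in each case. From $\mathd\Omega=mx_1^{m-1}g\,\mathd x_1+x_1^m\mathd g$, the Jacobian formula yields $\{u,x_1\}=x_1^m(\mathd u\wedge \mathd x_1\wedge \mathd g)/(\mathd x_1\wedge \mathd x_2\wedge \mathd x_3)$, so $x_1$ is Poisson normal and $\delta_{x_1}(u)=x_1^{m-1}(\mathd u\wedge \mathd x_1\wedge \mathd g)/(\mathd x_1\wedge \mathd x_2\wedge \mathd x_3)$. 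Since $g$ has degree at most $2$ (as $\deg\Omega=3$ and $m\geq 1$), $g\notin\kk[x_1]$, and $p>3$, at least one of $\partial_2 g,\partial_3 g$ is nonzero. So $\delta_{x_1}$ is nonzero on at least one of $x_2,x_3$, giving $\loz(P)\neq 0$.

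The main obstacle is the integrating factor step in part (1), namely writing $\omega=h\,\mathd\Omega$ polynomially for an arbitrary non-unimodular graded Poisson structure on $\kk[x_1,x_2,x_3]$. This is classical over characteristic zero, but the characteristic $p>3$ version may require a separate argument or a case-by-case verification after classifying the non-unimodular brackets in parallel with Lemma~\ref{xxlem4.1}.
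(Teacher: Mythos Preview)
Your arguments for parts (2) and (3) are correct and follow essentially the same route as the paper. For (2), both you and the paper show that any Poisson normal element in $P_{x_1^3}$ is already Poisson central; for (3), both exhibit a linear factor of $\Omega$ as a noncentral Poisson normal element via the wedge formula $\{a,x_1\}\,\mathd x_1\wedge\mathd x_2\wedge\mathd x_3 = x_1(\mathd a\wedge\mathd x_1\wedge\mathd b)$. One small point: the paper extracts such a linear factor directly from reducibility of the cubic (any reducible cubic not of the form $y^3$ has a linear factor $y$ with $y\nmid(\Omega/y)$), so you need not appeal to the classification list, which requires $\kk$ algebraically closed.

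For part (1), however, your integrating-factor step is not merely delicate in characteristic $p$; it is false already over $\mathbb{C}$. Not every integrable homogeneous quadratic $1$-form in three variables is of the form $h\,\mathd\Omega$ with \emph{polynomial} $h,\Omega$. Concretely, take $P_{\bc}$ with $c_{12}=0$, $c_{13}=1$, $c_{23}=2$; this is not unimodular, and
\[
\omega = 2x_2x_3\,\mathd x_1 - x_1x_3\,\mathd x_2.
\]
Any polynomial $h$ dividing both coefficients must divide $x_3$, but $\omega/x_3 = 2x_2\,\mathd x_1 - x_1\,\mathd x_2$ satisfies $\mathd(\omega/x_3)=-3\,\mathd x_1\wedge\mathd x_2\neq 0$, so it is not exact; and $\omega$ itself is not closed. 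Thus no factorization $\omega=h\,\mathd\Omega$ exists, and the approach cannot produce a Poisson normal element in this way.

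The paper's proof of (1) is entirely different and avoids this obstruction. One twists $P$ by $\delta=\tfrac{1}{3}\phi$ (in the sense of \eqref{eq.twist}); by \cite[Corollary~0.3]{TWZ} the twist $P^{\delta}$ is unimodular, hence has a potential $\Omega$. Since $\delta\notin\kk E$ (else $P=P^{\delta}$ would be unimodular), \cite[Theorem~3.8]{TWZ} gives $\div(\delta)=0$, and then Lemma~\ref{xxlem4.5} yields $\delta(\Omega)=0$. Using that $\Omega$ is Poisson central in $P^{\delta}$, the twist formula gives
\[
0=\{x_i,\Omega\}_{\mathrm{new}}=\{x_i,\Omega\}_P - 3\Omega\,\delta(x_i)=\{x_i,\Omega\}_P-\phi(x_i)\Omega,
\]
so $\Omega$ is Poisson normal in $P$ with $\delta_{\Omega}=\phi$. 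Note that the resulting normal element is the cubic $\Omega$, not a linear or quadratic factor $h$ as in your scheme; so even when your factorization happens to exist, the two proofs produce different witnesses for $\phi\in\loz(P)$.
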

\begin{proof}
(1)  Let $\phi$ be the modular derivation of $P$, 
which is nonzero. Set $\delta=\frac{1}{3}\phi$ 
and note that the sign difference from \cite{TWZ} 
because we have $\phi(a)=\div(H_a)$. By 
\cite[Corollary 0.3]{TWZ}, $P^\delta$ is a 
unimodular Poisson algebra and so there is a 
potential $\Omega$ such that the bracket 
$\{-,-\}_{\text{new}}$ on $P^\delta$ is given 
by \eqref{eq.twist}. Explicitly, for 
$1 \leq i,j \leq 3$,
\begin{equation}
\notag
\{ x_i,x_j \}_{\text{new}}
= \{ x_i,x_j \}_P + x_i\delta(x_j) - x_j\delta(x_i).
\end{equation} 
Note that $\delta \in \PDer_0(P)$ and 
also $\delta \in \PDer_0(P^\delta)$. Since 
$P^\delta$ unimodular, then $P \ncong P^\delta$, 
so $\delta \notin \kk E$. Thus $\div(\delta)=0$ 
\cite[Theorem 3.8]{TWZ} and so $\delta(\Omega)=0$ 
by Lemma \ref{xxlem4.5}. Moreover, $\Omega$ is a 
central element of $P^\delta$. Thus, for any $i$,
\[
0   = \{ x_i,\Omega \}_{\text{new}}
    = \{ x_i,\Omega \}_P + x_i\delta(\Omega) 
      - 3\Omega\delta(x_i)
    = \{ x_i,\Omega \}_P - 3\Omega\delta(x_i).
\]
Thus, $\{x_i,\Omega\}_P=\phi(x_i)\Omega$. This proves 
the claim.

(2) For the irreducible cases, see Lemma 
\ref{xxlem4.4}. It is left to consider the case 
$\Omega=x_1^3$. The Poisson bracket is given, 
after scaling, as
\[ \{x_1,x_2\} = \{x_1,x_3\} = 0, 
\qquad \{x_2,x_3\} = x_1^2.\]
Recall that this is Poisson Ore extension 
$\kk[x_1,x_2][x_3;0,\beta]$ where 
$\beta(x_1)=0$ and $\beta(x_2)=x_1^2$.
To prove $\loz(A_\Omega)=0$, it suffices to 
prove that there are no noncentral Poisson normal 
elements. To see this, let $a\in P\setminus Z(P)$. 
Since $Z(P)=\kk[x_1,x_2^p,x_3^p]$, without loss 
of generality, we can write $a=\sum c_i x_2^i$ 
where $c_i\in \kk[x_1,x_3]$ for all $i$ and 
$c_i\neq 0$ for some $i\not\equiv 0 \mod p$. 
But then
\[ \{a,x_3\} = \sum c_i i x_1^2 x_2^{i-1}
=x_1^2 \sum c_i i x_2^{i-1}\]
and it is clear that $a$ does not divide 
the right-hand side.

(3) If $\Omega$ is reducible and not $x_1^3$, 
then it must have a degree 1 factor $y$ 
such that $\Omega=yb$ and $y\nmid b$. Since 
$y$ has degree $1$, we can assume without 
loss of generality that $y=x_1$. We claim 
that $x_1$ is Poisson normal. For any $a\in P$, 
we have 
\begin{align*}
\{a,x_1\}(\mathd x_1\wedge\mathd x_2\wedge 
\mathd x_3)
&=\mathd a\wedge\mathd x_1\wedge \mathd (x_1b)\\
&= \mathd a\wedge\mathd x_1\wedge 
(x_1\mathd b+b\mathd x_1) \\
&= x_1\left(\mathd a\wedge\mathd x_1\wedge 
\mathd b\right). 
\end{align*}
Hence $\{a,x_1\}\in x_1 P$, so $x_1$ is 
Poisson normal. Since $x_1\nmid b$, 
$\mathd a\wedge\mathd x_1\wedge \mathd b$ is 
not identically zero. This means that $x_1$ 
is not Poisson central.
\end{proof}

The following is now clear from the previous lemma.

\begin{theorem}
\label{xxthm4.7}
Assume $p>3$. Let $P = \kk[x_1, x_2, x_3]$ be 
a graded polynomial Poisson algebra. Then 
$\loz(P) = 0$ if and only if $P$ is unimodular 
and isomorphic to $P_{\Omega}$ where $\Omega$ 
is irreducible or $\Omega = x_1^3$.
\end{theorem}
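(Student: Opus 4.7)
The proof is essentially a direct assembly of the three parts of Lemma \ref{xxlem4.6}, together with Lemma \ref{xxlem4.1} to handle the unimodular classification. So my plan is to simply organize the case analysis cleanly.

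For the backward direction, I would assume $P$ is unimodular and isomorphic to $P_\Omega$ with $\Omega$ either irreducible or equal to $x_1^3$. Then $\loz(P) = 0$ is exactly the content of Lemma \ref{xxlem4.6}(2), so there is nothing more to do.

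For the forward direction, I would argue by contrapositive. Suppose $P$ is either not unimodular, or is unimodular but isomorphic to $P_\Omega$ with $\Omega \neq x_1^3$ reducible. In the first case, Lemma \ref{xxlem4.6}(1) supplies a nonzero element of $\loz(P)$ (namely the modular derivation $\phi$, shown there to coincide with $\delta_\Omega$ up to scaling since $\{x_i,\Omega\}_P = \phi(x_i)\Omega$). In the second case, Lemma \ref{xxlem4.6}(3) produces a Poisson normal but non-central linear form (a degree-one factor of $\Omega$), whose log-ozone derivation is nonzero. The only point that needs invoking separately is that the hypothesis $\loz(P) = 0$ together with unimodularity forces $P$ to have Jacobian structure at all, so that we can meaningfully speak of a potential $\Omega$; this is the content of Lemma \ref{xxlem4.1} (using $p>3$). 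Once $P \cong P_\Omega$ is available, Lemma \ref{xxlem4.1} also tells us $\Omega$ is (up to a linear change of variables) in the listed family, and the classification into ``irreducible,'' ``equal to $x_1^3$,'' and ``reducible and not $x_1^3$'' is exhaustive.

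There is no real obstacle here since all the technical work is done in Lemmas \ref{xxlem4.1} and \ref{xxlem4.6}. The only minor bookkeeping issue is making sure the trichotomy (not unimodular / unimodular with $\Omega$ irreducible or $x_1^3$ / unimodular with $\Omega$ reducible and $\neq x_1^3$) is genuinely exhaustive, which follows from Lemma \ref{xxlem4.1}. Accordingly the write-up should be short: one sentence invoking Lemma \ref{xxlem4.6}(2) for one direction, and two or three sentences handling the contrapositive of the other direction by splitting into the unimodular and non-unimodular subcases and applying Lemmas \ref{xxlem4.6}(1) and \ref{xxlem4.6}(3), respectively.
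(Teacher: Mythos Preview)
Your proposal is correct and matches the paper's approach exactly: the paper's proof is the single sentence ``The following is now clear from the previous lemma,'' and your write-up is precisely the unpacking of that sentence via Lemma~\ref{xxlem4.6}(1)--(3) together with Lemma~\ref{xxlem4.1}. One small remark: you don't actually need the full classification of potentials from Lemma~\ref{xxlem4.1} (which assumes $\kk$ algebraically closed) to see that the trichotomy is exhaustive---once you know $P\cong P_\Omega$ for some cubic $\Omega$, the cases ``$\Omega$ irreducible,'' ``$\Omega$ a cube of a linear form,'' and ``$\Omega$ reducible but not a perfect cube'' are trivially exhaustive, and Lemma~\ref{xxlem4.6}(3)'s proof already handles the last case in that generality.
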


We conclude this section by computing 
explicitly the log-ozone groups for unimodular 
polynomial Poisson algebras in dimension 3 with 
reducible potential.

\begin{lemma}
\label{xxlem4.8}
Suppose $\kk$ is algebraically closed. Let 
$P$ be a graded polynomial Poisson algebra. 
If $\loz(P)\cong {\ZZ}/(p)$, then $P$ 
is loz-decomposable.
\end{lemma}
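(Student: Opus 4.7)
My plan is to write $\loz(P)=\{0,\delta,2\delta,\dots,(p-1)\delta\}$ with $\delta=\delta_g$ for some regular Poisson normal $g\in P$, and set $P_i:=P_{i\delta}$. Note $g^i\in P_i$ by Lemma~\ref{xxlem2.2}(1). Loz-decomposability is exactly the statement that the sum $\sum_{i=0}^{p-1} P_i$ inside $P$ is direct, so I will assume a relation $\sum_{i=0}^{p-1} f_i=0$ with $f_i\in P_i$ and deduce each $f_i=0$. The strategy is to pass to the fraction field $Q=Q(P)$, factor $f_i=z_i g^i$ with $z_i$ in the Poisson center of $Q$, and then show that $1,g,g^2,\dots,g^{p-1}$ are $Q(Z)$-linearly independent via an iterated-bracket / Vandermonde argument.

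First I would verify the factorization. The derivation $\delta_{f_i g^{-i}}=\delta_{f_i}-\delta_{g^i}=i\delta-i\delta=0$ on $Q$, so $f_i g^{-i}$ is Poisson central in $Q$. A quick quotient-rule check shows $Q(Z)\subseteq Z(Q)$, so we may write $f_i=z_i g^i$ with $z_i\in Q(Z)$, and the relation becomes $\sum_{i=0}^{p-1} z_i g^i=0$ in $Q$. Since $\delta\ne 0$, fix some $a\in P$ with $\delta(a)\ne 0$; note $\{a,g\}=g\delta(a)$ and hence $\{a,g^i\}=ig^i\delta(a)$.

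Next I would run the iteration. Because each $z_i$ is Poisson central in $Q$, applying $\{a,-\}$ to any relation of the form $\sum_{i=0}^{p-1} i^k z_i g^i=0$ yields $\delta(a)\sum_{i=0}^{p-1} i^{k+1} z_i g^i=0$, and dividing by the nonzero $\delta(a)$ gives $\sum_{i=0}^{p-1} i^{k+1} z_i g^i=0$. Starting at $k=0$ and iterating produces the system $\sum_{i=0}^{p-1} i^k z_i g^i=0$ for $k=0,1,\dots,p-1$. The coefficient matrix $(i^k)$ is Vandermonde in the pairwise distinct elements $0,1,\dots,p-1$ of $\FF_p\subseteq\kk$, hence invertible; therefore $z_i g^i=0$ for each $i$, so $z_i=0$, so $f_i=0$, as desired.

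The main obstacle I anticipate is making the iteration close cleanly. If one tries to work inside $P$ with $z_i$ only Poisson normal rather than Poisson central, then applying $\{a,-\}$ picks up extra terms of the form $\{a,z_i\}g^i$ and the Vandermonde structure is lost. The way around this is precisely the reduction to $Q$ together with the inclusion $Q(Z)\subseteq Z(Q)$; once that observation is in place, the rest is a clean linear-algebra computation in $\FF_p$, and the fact that $\loz(P)$ has the specific form $\ZZ/(p)$ (with a single cyclic generator $g$) is what makes a single Vandermonde matrix suffice.
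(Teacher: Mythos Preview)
Your proof is correct and takes a genuinely different route from the paper's. The paper argues by a case split on the eigenvalue structure of a generator $\delta$ acting on $P_1$: if $\delta$ has a nonzero eigenvalue then $P$ is quasi-inferable and Lemma~\ref{xxlem2.10} applies; if all eigenvalues of $\delta$ on $P_1$ vanish, the paper picks $x_1,x_2\in P_1$ with $\delta(x_1)=0$, $\delta(x_2)=x_1$ (using algebraic closedness), brackets a minimal dependence $\sum f_i=0$ against $x_2$, and contradicts minimality. Your argument is instead a single uniform computation: pass to $Q=Q(P)$, write $f_i=z_ig^i$ with $z_i\in Z(Q)$, and use iterated brackets with a fixed $a$ to produce a Vandermonde system over $\FF_p$. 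A pleasant byproduct is that your version uses neither the graded hypothesis nor the assumption that $\kk$ is algebraically closed; it only needs $P$ to be a domain and $\delta\neq 0$. The paper's approach, on the other hand, stays closer to the eigenvalue/quasi-inferable framework already built up in Lemma~\ref{xxlem2.10}.

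One small wording issue: the sentence ``$Q(Z)\subseteq Z(Q)$, so we may write $f_i=z_i g^i$ with $z_i\in Q(Z)$'' has the inclusion pointing the wrong way for that conclusion. What you actually established (and all you use afterwards) is $z_i\in Z(Q)$. If you do want the stronger statement $z_i\in Q(Z)$, it follows from the same trick as in Lemma~\ref{xxlem2.11}(2): both $f_i^{\,p}$ and $f_i^{\,p-1}g^i$ lie in $Z$, so $z_i=f_i g^{-i}=f_i^{\,p}\big/(f_i^{\,p-1}g^i)\in Q(Z)$. Either way the iteration and Vandermonde step go through unchanged.
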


\begin{proof} Let $\delta$ be a generator of 
$\loz(P)$ and write $\loz(P)={\ZZ}/(p) 
\delta$. If $\delta$ has a nonzero eigenvalue, 
then $P$ is quasi-inferable and the assertion 
follows from Lemma \ref{xxlem2.10}. 

Now assume that all eigenvalues of $\delta$ 
(and any element in $\loz(P)$) are 0. We 
continue to use the convention introduced in 
the proof of Lemma \ref{xxlem2.10}(2) and 
suppose to the contrary that $P$ is not 
loz-decomposable. Choose a minimal $n$ such 
that $\delta_1, \dots, \delta_n \in \loz(P)$ 
and nonzero $f_i \in P_{\delta_i}$ so that 
$\sum_{i = 1}^n f_i = 0$. Further we can 
assume that $f_n\in Z(P)$ after replacing 
$f_n$ by $f_n^{p}$. Since $\kk$ is algebraically 
closed,  there are $x_1$ and $x_2\in P_1$ (the 
subspace of degree 1) such that $\delta(x_1)=0$ 
and $\delta(x_2)=x_1$. Since every $\delta_i$ 
is a scalar multiple of $\delta$, we have 
$\delta_i(x_1) =0$ and $\delta(x_2)=x_1$.  Then
\[
0 = \left\{ -f_n, x_2\right\} 
= \sum_{i=1}^{n-1} \left\{ f_i, x_2 \right\} 
= \sum_{i=1}^{n - 1} f_i \delta_i(x_2) 
= \sum_{i=1}^{n - 1} f_i x_1.
\]
Since $P$ is a domain, we may cancel $x_1$ and 
so $\sum_{i=1}^{n-1} f_i=0$. This contradicts 
the minimality of $n$.
\end{proof}

The following result is a straightforward 
case-by-case computation. The details are omitted.

\begin{proposition}
\label{xxpro4.9}
Assume that $p>3$ and that $\kk$ is algebraically 
closed. Let $P=\kk[x_1,x_2,x_3]$ be a unimodular 
polynomial Poisson algebra with potential $\Omega$. 
Suppose $\Omega$ is reducible, $\Omega \neq x_1^3$.
\begin{enumerate}
\item 
\label{red1} If $\Omega=x_1^2x_2$, then 
$\loz(P_\Omega) = \grp{\delta_{x_1},\delta_{x_2}} = \grp{\delta_{x_1}}
={\ZZ}/(p)$ and $P_{\Omega}$ 
is loz-decomposable, but not quasi-inferable.
\item 
\label{red2} If $\Omega=2x_1x_2x_3$, then 
$\loz(P_\Omega) 
= \grp{\delta_{x_1},\delta_{x_2},\delta_{x_3}}
=\grp{\delta_{x_1},\delta_{x_2}}
={\ZZ}/(p)^{\oplus 2}
$ and $P_{\Omega}$ is inferable.
\item 
\label{red3} If $\Omega=x_1^2x_2+x_1x_2^2$, then 
$\loz(P_\Omega) 
= \grp{\delta_{x_1},\delta_{x_2}}
={\ZZ}/(p)^{\oplus 2}$ and $P_{\Omega}$ 
is not loz-decomposable {\rm{(}}see Example \ref{xxexa5.4}(1){\rm{)}}.
\item 
\label{red4}
If $\Omega=x_1^3+x_1^2x_2+x_1x_2x_3$, then 
$\loz(P_\Omega) = \grp{\delta_{x_1}}
={\ZZ}/(p)$ and $P_{\Omega}$ is inferable.
\item 
\label{red5}
If $\Omega=x_1^2x_3+x_1x_2^2$, then
$\loz(P_\Omega) 
= \grp{\delta_{x_1}}={\ZZ}/(p)$ and 
$P_{\Omega}$ is loz-decomposable, but not 
quasi-inferable. 
\end{enumerate}
\end{proposition}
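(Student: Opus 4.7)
The plan is to handle each of the five reducible potentials in turn, determining the Poisson normal elements, computing the induced log-ozone derivations on the degree-one generators, and then verifying the claimed structural properties.

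\emph{Step 1 (normal elements).} By Lemma~\ref{xxlem2.2}(6) every log-ozone derivation arises from a homogeneous Poisson normal element, and parts~(2)--(4) of that lemma reduce the problem to irreducible Poisson normals in the UFD $P_\Omega$. Lemma~\ref{xxlem4.6}(3) supplies all linear normals as the degree-one factors of $\Omega$: namely $\{x_1,x_2\}$ in (\ref{red1}), $\{x_1,x_2,x_3\}$ in (\ref{red2}), $\{x_1,x_2,x_1+x_2\}$ in (\ref{red3}), and $\{x_1\}$ in (\ref{red4}) and (\ref{red5}). In the last two, writing $\Omega=x_1\gamma$, the remaining factor $\gamma$ is automatically Poisson normal by Lemma~\ref{xxlem2.2}(3), since $\Omega$ is Poisson central and $\gcd(x_1,\gamma)=1$. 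To exclude any further irreducible Poisson normals, I would exploit that $\loz(P_\Omega)$ injects into $\End(A_1)$ (because $A_1$ generates $P_\Omega$ as a $\kk$-algebra) and that all log-ozone derivations commute pairwise by Lemma~\ref{xxlem2.2}(5); checking in each case that the commutant in $\End(A_1)$ of the already-identified derivations is no larger than their span closes the enumeration.

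\emph{Step 2 (group structure).} Computing $\delta_f(x_k)=f^{-1}\{x_k,f\}$ directly from the Jacobian bracket $\{x_i,x_j\}=\partial\Omega/\partial x_k$ gives the explicit action on $A_1$. Since $\Omega\in Z(P_\Omega)$, the relation $\delta_\Omega=0$ produces exactly one $\FF_p$-linear relation among the generators of $\loz(P_\Omega)$: $2\delta_{x_1}+\delta_{x_2}=0$ in (\ref{red1}); $\delta_{x_1}+\delta_{x_2}+\delta_{x_3}=0$ in (\ref{red2}); $\delta_{x_1}+\delta_{x_2}+\delta_{x_1+x_2}=0$ in (\ref{red3}); and $\delta_\gamma=-\delta_{x_1}$ in (\ref{red4}) and (\ref{red5}). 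The stated group orders follow once one notes that each $\delta_{x_i}$ has order exactly $p$ (equivalently, $x_i^k$ is not Poisson central for $1\le k<p$).

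\emph{Step 3 (structural properties).} Inferability reduces to inspecting the matrix of $\delta_{x_1}$ on the ordered basis $(x_1,x_2,x_3)$: in (\ref{red2}) and (\ref{red4}) the matrix is diagonalizable with three distinct eigenvalues (giving inferability), while in (\ref{red1}), (\ref{red3}), and (\ref{red5}) it is a nonzero nilpotent matrix, which rules out quasi-inferability; in (\ref{red5}) the identity $\delta_\gamma=-\delta_{x_1}$ means that including $\gamma$ does not help. Loz-decomposability in (\ref{red1}) and (\ref{red5}) follows from Lemma~\ref{xxlem4.8}, since $\loz(P_\Omega)\cong\ZZ/(p)$; in (\ref{red2}) and (\ref{red4}) it follows from inferability via Lemma~\ref{xxlem2.10}(2); and in (\ref{red3}) the failure of loz-decomposability is precisely the relation $-\Omega+x_1^2x_2+x_1x_2^2=0$ computed in Example~\ref{xxexa2.15}.

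The main obstacle is the completeness claim in Step~1. The individual $\delta_f$ computations and matrix checks in Steps~2 and~3 are routine, but excluding high-degree irreducible Poisson normals requires a nontrivial finite polynomial calculation; the commutant argument outlined above should be the most economical route, since it reduces five separate enumeration problems to five linear-algebra checks inside $\End(A_1)\cong M_3(\kk)$.
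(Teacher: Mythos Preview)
The paper does not give a proof of this proposition; it says only that ``the following result is a straightforward case-by-case computation'' and omits the details. Your Steps~2 and~3 are correct and are essentially what that omitted computation would contain: the explicit action of each $\delta_{x_i}$ on $A_1$, the relation $\delta_\Omega=0$, and the (non-)diagonalizability checks all work out as you describe, and your appeals to Lemma~\ref{xxlem4.8}, Lemma~\ref{xxlem2.10}, and Example~\ref{xxexa2.15} are exactly right.

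The gap is in Step~1. Your proposed route---showing that the commutant in $\End(A_1)$ of the already-identified derivations equals their span---fails in every case. In case~(\ref{red2}) the matrices of $\delta_{x_1},\delta_{x_2}$ are $\operatorname{diag}(0,-2,2)$ and $\operatorname{diag}(2,0,-2)$; their commutant is the full $3$-dimensional space of diagonal matrices, strictly larger than their $2$-dimensional span. In case~(\ref{red1}) the matrix of $\delta_{x_1}$ is a single Jordan block of rank one, whose commutant is $5$-dimensional. Even adding the constraint $\delta(\Omega)=0$ (equivalently $\operatorname{div}(\delta)=0$ by Lemma~\ref{xxlem4.5}) and the full $\PDer_0$ condition is not enough: in case~(\ref{red1}) one computes that $\PDer_0(P_\Omega)\cap\{\operatorname{div}=0\}$ is $3$-dimensional, spanned by $\delta_{x_1}$, the map $\rho:x_3\mapsto x_2$, and the semisimple map $\eta:x_i\mapsto x_i,\,x_2\mapsto -2x_2$. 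Neither $\rho$ nor $\eta$ lies in $\loz(P_\Omega)$, but this cannot be seen from linear algebra on $A_1$ alone.

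What is actually needed is a direct enumeration of irreducible Poisson normal elements using the bracket, which is presumably what the paper means by ``case-by-case.'' For instance in case~(\ref{red1}): if $f$ is irreducible Poisson normal of degree $\ge 2$, then $\{x_1,f\}=-x_1^2\,\partial f/\partial x_3$ together with $f\nmid x_1$ forces $\partial f/\partial x_3=0$; then Euler's identity combined with $f\mid\{x_3,f\}=x_1(x_1f_1-2x_2f_2)$ forces $f_1=f_2=0$, a contradiction. The other four cases are handled by parallel short arguments. Once this enumeration is in place, your Steps~2 and~3 complete the proof.
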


\section{Comments, examples, and questions}
\label{xxsec5}

We end by presenting a few examples, some open 
questions, and future directions for research
related to log-ozone groups and polynomial Poisson algebras.
One of the basic questions towards the structure of polynomial Poisson algebras is the following.

\begin{question}
\label{xxque5.6}
Let $P$ be a graded polynomial Poisson algebra. 
Under what conditions is $Z(P)$ (resp. 
$C_{\loz}(P)$, $P_{\loz}$) regular, 
Gorenstein, or Cohen--Macaulay? 
\end{question}

In confronting Question \ref{xxque5.6} in the unimodular case, we have throughout much of our work assumed that the characteristic $p$ of the field $\kk$ satisfies $p > 3$. This hypothesis is crucial as the next example shows.

\begin{example}
\label{xxexa5.4}
Let $P=P_\Omega$ where $\Omega=x_1x_2(x_1+x_2)$.
Then 
\[\{x_1,x_2\}=0, \quad 
\{x_2,x_3\}=2x_1 x_2+ x_2^2, \quad
\{x_3,x_1\}=x_1^2+2x_1x_2.\]

By Example~\ref{xxexa2.15}, when $p>3$, then $Z(P)$ is Gorenstein.
Now suppose $p=3$. We claim $Z(P)$ is generated 
by $x_1^3, x_2^3, x_3^3, x_1^2 x_2, x_1x_2^2$.
It is clear that $x_1^3,x_2^3,x_3^3 \in Z(P)$. 
By Lemma \ref{xxlem1.5}(2), it suffices to 
compute $\kk[x_1,x_2]^\beta$ where $\beta$ is 
the derivation induced by $x_3$. That is, $\beta(x_2)=x_2^2+2x_1x_2$ and 
$\beta(x_1)=-(x_1^2+2x_1x_2)$.
Let $\sum c_{ij} x_1^ix_2^j \in \kk[x_1,x_2]^\beta$. Then
\begin{align*}
\beta\left(\sum c_{ij} x_1^ix_2^j\right)
    &= \sum c_{ij} \left( j x_1^i x_2^{j-1} (x_2^2+2x_1x_2)- i x_1^{i-1}x_2^j (x_1^2+2x_1x_2) \right) \\
    &= \sum c_{ij} x_1^i x_2^j \left( (i-2j)x_1 + (j-2i)x_2  \right) \\
    &= \sum c_{ij} x_1^i x_2^j (i+j) (x_1+x_2),
\end{align*}
where the last step follows because we are working mod $3$. Hence, $c_{ij}\neq 0$ implies $i+j \equiv 0 \mod 3$. The result follows.

To compute the Hilbert series of $Z(P)$, it 
suffices to compute the Hilbert series of 
the subalgebra of $\kk[x_1,x_2]$ generated 
by $x_1^3,x_1^2x_2,x_1x_2^2,x_2^3$. This is 
the 3-Veronese $\kk[x_1,x_2]_{(3)}$, which 
is $\frac{1+2 t^3}{(1-t^3)^2}$. Hence, $Z(P)$ 
has Hilbert series $\frac{1+2 t^3}{(1-t^3)^3}$, 
which is not palindromic, so $Z(P)$ is not 
Gorenstein.
\end{example} 

The previous example inspires the following question.

\begin{question}
\label{xxque5.7}
Let $P=\kk[x_1,x_2,x_3]$ be a graded Poisson 
algebra over a field $\kk$ with $\ch(\kk) = 2,3$. 
Under what conditions is $Z(P)$ Cohen--Macaulay or 
Gorenstein? One could also ask for the Poisson 
center when the generators are not all given 
degree 1.
\end{question}

Other than the case of skew-symmetric Poisson algebras,
we have not attempted to classify the centers of graded
Poisson algebras outside of the unimodular setting.
The following example suggests such a classification is possible.

\begin{example}
\label{xxexa5.5}
Let $d,c \in \kk$ and let $P=\kk[x_1,x_2,x_3]$ 
with a Poisson structure given by
\[
    \{x_1,x_2\}=cx_1^2+dx_3^2 \qquad
    \{x_2,x_3\}=(2c+1)x_1x_3 \qquad
    \{x_3,x_1\}=0.
\]
Then
\begin{align*}
    \{x_1,x_1^i x_2^j x_3^k\} &= jc x_1^{i+2}x_2^{j-1}x_3^k + jd x_1^i x_2^{j-1} x_3^{k+2}\\
    \{x_2,x_1^i x_2^j x_3^k\} 
        &= (k(2c+1) - ic) x_1^{i+1}x_2^jx_3^{k} - id x_1^{i-1} x_2^j x_3^{k+2}\\ 
    \{x_3, x_1^i x_2^j x_3^k\} &= -j(2c + 1) x_1^{i + 1} x_2^{j-1} x_3^{k+1}
\end{align*}

Assume $d=0$ and $c,2c+1\ne 0$ in $\kk$, then the Poisson center $Z$ is the subalgebra of $P$ generated by the elements $x_1^p,x_2^p,x_3^p$ and $(x_1x_3^\lambda)^i$ where $\lambda = c(2c+1)^{-1}$ and $i=1,\ldots,p-1$. 
We can exhibit $X:=\Spec(Z)$ as a toric variety as follows. Let $U$ be the open subset of $X$ obtained by removing the closed subsets $(x_1^p), (x_2^p)$ and $(x_3^p)$, then 
\[ \cO(U)=\kk[x_1^{\pm p}, x_2^{\pm p}, (x_1^{1-p}x_3^\lambda)^{\pm 1}]\]
so $U$ is an open dense subset of $X$ which is isomorphic to a torus.

We can use Stanley's theorem \cite[Corollary 6.4]{Stanley} to show that $X$ is Gorenstein if and only if $c=-1$. Omitting the $y$-component, the $\beta_i$'s in that theorem are just $B:=\{(i,\lambda i \mod p)\}_{i=1}^{p-1}$. Then $X$ is Gorenstein if and only if $B$ has a maximal element, this happens if and only if $(p-1)\lambda=p-1$, iff $\lambda=1$ iff $c=-1$. 
\end{example}

In this paper we were mainly interested in 
\emph{graded} Poisson algebras. However, many  
of our definitions apply to general (polynomial) Poisson 
algebras. The following example shows that the non-graded
case may exhibit different behavior from the graded case.

\begin{example}
\label{xxexa5.1}
Let $\{a_i\}_{i=1}^{n}$ be a set of distinct 
nonzero scalars. Let $P$ be the polynomial 
Poisson algebra $\kk[x_1, x_2]$ with Poisson 
bracket determined by
\[\{x_1, x_2\}=\prod_{s=1}^{n} (x_1+a_s x_2).\]
Let $y_i=x_1+a_i x_2$ and $\delta_i=\delta_{y_i}$ 
for all $i$ . Then $y_i$ is a Poisson normal 
element and $\delta_i: x_1\mapsto a_i y_i^{-1}
\prod_{s=1}^{n} y_s, x_2\mapsto -y_i^{-1}
\prod_{s=1}^{n} y_i$. Then 
$\delta_1, \cdots,\delta_n$ are 
${\ZZ}/(p)$-linearly independent. As 
a consequence, $|\loz(P)|\geq p^n$. But 
$\rk_{Z(P)}(P)=p^2$ since $Z(P)=
\kk[x_1^p, x_2^p]$.
\end{example}

Motivated by the above example and Lemma 
\ref{xxlem2.11}(2), we ask the following
question.

\begin{question}
\label{xxque5.2}
Let $P$ be a graded polynomial Poisson 
algebra in $n$-variables. Is it true that
$|\loz(P)|\leq \rk_{Z(P)}(P)$?
\end{question}

Another family of non-graded (filtered) polynomial Poisson  
algebras arise in Lie theory.
Given a Lie algebra $\fg$, then $S(\fg)$ is a 
Poisson algebra with \emph{Kostant--Kirillov 
bracket} defined by $\{a,b\} = [a,b]$ for all 
$a,b \in \fg$.

\begin{question}
\label{xxque5.8}
Let $\fg$ be a 2- or 3-dimensional Lie algebra. 
What is the Poisson center of $S(\fg)$ and the corresponding log-ozone group?
\end{question}

We end with two questions which may suggest further directions of study.
For a Poisson algebra $P$, let $\Aut(P)$ denote the group of 
automorphism of $P$. We have the following interesting
action of $\Aut(P)$ on $\loz(P)$.

\begin{question}
\label{xxque5.3}
For $\sigma\in \Aut(P)$, 
define $\Pi_{\sigma}:\loz(P) \to \loz(P)$
by $\delta_{f}\mapsto 
\delta_{\sigma(f)}$. It is easy to see 
that $\Pi_{\sigma}$ a group automorphism 
of $\loz(P)$, so $\Aut(P)$ acts on 
$\loz(A)$. What can we say about this action?
\end{question}

In Lemma \ref{xxlem4.5} we showed that for a 
graded Poisson derivation $\delta$ of 
$P=P_\Omega$, $\delta(\Omega)=\div(\delta)\Omega$. 
This is similar to a result for $m$-Koszul 
Artin--Schelter regular algebras by Mori and 
Smith \cite[Theorem 1.2]{MS} where $\div$ 
plays the role of homological determinant. 
Therefore, we ask whether this holds more 
generally.

\begin{question}
\label{xxque5.9}
Let $P$ be a polynomial Poisson algebra with 
modular derivation $\phi$. Is there a degree 
$n$ polynomial $\Omega$ such that 
$\phi=\delta_\Omega$? That is, do we have 
$\phi \in \loz(P)$ in general?
\end{question}


\begin{thebibliography}{CGWZ25}

\bibitem[BM17]{BM}
A.~Bonifant and J.~Milnor, \emph{On real and complex cubic curves}, Enseign.
  Math. \textbf{63} (2017), no.~1-2, 21--61.

\bibitem[CGWZ24]{CGWZ2}
K.~Chan, J.~Gaddis, R.~Won, and J.~J. Zhang, \emph{Ozone groups and centers of
  skew polynomial rings}, Int. Math. Res. Not. IMRN (2024), no.~7, 5689--5727.

\bibitem[CGWZ25]{CGWZ3}
\bysame, \emph{Ozone groups of {A}rtin-{S}chelter regular algebras satisfying a
  polynomial identity}, Math. Z. \textbf{310} (2025), no.~4, Paper No. 69, 38.

\bibitem[EG10]{Etingof2010}
P.~Etingof and V.~Ginzburg, \emph{Noncommutative del {P}ezzo surfaces and
  {C}alabi-{Y}au algebras}, J. Eur. Math. Soc. (JEMS) \textbf{12} (2010),
  no.~6, 1371--1416.

\bibitem[GKM17]{GKM}
J.~Gaddis, E.~Kirkman, and W.~F. Moore, \emph{On the discriminant of twisted
  tensor products}, J. Algebra \textbf{477} (2017), 29--55.

\bibitem[J{\k{e}}d14]{Jconst2}
P.~J{\k{e}}drzejewicz, \emph{Irreducible {J}acobian derivations in positive
  characteristic}, Cent. Eur. J. Math. \textbf{12} (2014), no.~8, 1278--1284.
  \MR{3201321}

\bibitem[KMM02]{KM}
I.~A. Kogan and M.~Moreno~Maza, \emph{Computation of canonical forms for
  ternary cubics}, Proceedings of the 2002 {I}nternational {S}ymposium on
  {S}ymbolic and {A}lgebraic {C}omputation, ACM, New York, 2002, pp.~151--160.
  \MR{2035244}

\bibitem[Kra96]{Kr96}
H.~Kraft, \emph{Challenging problems on affine {$n$}-space}, S\'{e}minaire
  Bourbaki, Vol. 1994/95, no. 237, 1996, pp.~Exp. No. 802, 5, 295--317.

\bibitem[LWW15]{LWW}
J.~Luo, S.-Q. Wang, and Q.-S. Wu, \emph{Twisted {P}oincar\'{e} duality between
  {P}oisson homology and {P}oisson cohomology}, J. Algebra \textbf{442} (2015),
  484--505.

\bibitem[LWW24]{LWW3}
\bysame, \emph{Poincar\'e{} duality for smooth {P}oisson algebras and {BV}
  structure on {P}oisson cohomology}, J. Algebra \textbf{649} (2024), 169--211.
  \MR{4727073}

\bibitem[LWZ17]{LWZ17}
J.~L\"u, X.~Wang, and G.~Zhuang, \emph{Homological unimodularity and
  {C}alabi-{Y}au condition for {P}oisson algebras}, Lett. Math. Phys.
  \textbf{107} (2017), no.~9, 1715--1740. \MR{3687261}

\bibitem[MS16]{MS}
I.~Mori and S.~P. Smith, \emph{{$m$}-{K}oszul {A}rtin-{S}chelter regular
  algebras}, J. Algebra \textbf{446} (2016), 373--399.

\bibitem[Prz01]{PRZ}
R.~Przybysz, \emph{On one class of exact {P}oisson structures}, J. Math. Phys.
  \textbf{42} (2001), no.~4, 1913--1920.

\bibitem[Rib22]{Ri}
N.~R. Ribeiro, \emph{Classification at infinity of polynomials of degree 3 in 3
  variables}, J. Singul. \textbf{25} (2022), 377--392.

\bibitem[Sta78]{Stanley}
R.~P. Stanley, \emph{Hilbert functions of graded algebras}, Advances in Math.
  \textbf{28} (1978), no.~1, 57--83.

\bibitem[TWZ25]{TWZ}
X.~Tang, X.~Wang, and J.~J. Zhang, \emph{Twists of graded {P}oisson algebras
  and related properties}, J. Geom. Phys. \textbf{207} (2025), Paper No.
  105344.

\end{thebibliography}

\providecommand{\bysame}{\leavevmode\hbox to3em{\hrulefill}\thinspace}
\providecommand{\MR}{\relax\ifhmode\unskip\space\fi MR }
\providecommand{\MRhref}[2]{%
  \href{http://www.ams.org/mathscinet-getitem?mr=#1}{#2}
}
\providecommand{\href}[2]{#2}

\end{document}